\tikzset{
    root/.style={red, rectangle, fill, inner sep=2.2pt}
}
\numberwithin{equation}{section}
\newcommand{\addresseshere}{%
  \enddoc@text\let\enddoc@text\relax
}
\theoremstyle{plain}
\newtheorem{Thm}{Theorem}[section]
\newtheorem{Lemma}[Thm]{Lemma}
\newtheorem{Prop}[Thm]{Proposition}
\theoremstyle{definition}
\newtheorem{Def}[Thm]{Definition}
\newtheorem{Conj}[Thm]{Conjecture}
\newtheorem{Rem}[Thm]{Remark}
\newtheorem{?}[Thm]{Problem}
\DeclareMathOperator{\dist}{dist}
\DeclareMathOperator{\adj}{adj}
\newcommand{\x}{\underline{x}}
\newcommand{\y}{\underline{y}}
\newcommand{\z}{\underline{z}}
\renewcommand{\v}{\underline{v}}
\newcommand{\m}{\mathrm{main}}
\newcommand{\e}{\mathrm{err}}
\def\xt{\widetilde{\underline{x}}}
\def\Ht{\widetilde{H}}
\def\Bt{\widetilde{B}}
\def\St{\widetilde{S}}
\def\Tt{\widetilde{T}}
\def\Qt{\widetilde{Q}}
\def\Uc{\mathcal{U}}
\def\Pc{\mathcal{P}}
\def\Jh{\hat{J}}
\def\Sh{\hat{S}}
\def\Th{\hat{T}}
\def\al{\alpha}
\def\be{\beta}
\def\ga{\gamma}
\def\gat{\widetilde{\gamma}}
\def\Ga{\Gamma}
\def\eps{\varepsilon}
\def\ka{\kappa}
\def\la{\lambda}
\def\lat{\widetilde{\lambda}}
\def\Vact{V_{\mathrm{act}}}
\def\bestar{\beta_{\star}}
\def\betr{\beta_{\mathrm{tr}}}
\def\sm{\setminus}
\def \Esix{
\begin{tikzpicture}[baseline=-0.6ex]
\draw (0,0) -- (0.3,0) -- (0.6,0) -- (0.9,0) -- (1.2,0);
\draw (0.6,0) -- (0.6,0.3);
\filldraw (0,0) circle (1.1pt);
\filldraw (0.3,0) circle (1.1pt);
\filldraw (0.6,0) circle (1.1pt);
\filldraw (0.9,0) circle (1.1pt);
\filldraw (1.2,0) circle (1.1pt);
\filldraw (0.6,0.3) circle (1.1pt);
\end{tikzpicture}
}
\def \Ehatsix{
\begin{tikzpicture}[baseline=-0.6ex]
\draw (0,0) -- (0.3,0) -- (0.6,0) -- (0.9,0) -- (1.2,0);
\draw (0.6,0) -- (0.6,0.3) -- (0.6,0.6);
\filldraw (0,0) circle (1.1pt);
\filldraw (0.3,0) circle (1.1pt);
\filldraw (0.6,0) circle (1.1pt);
\filldraw (0.9,0) circle (1.1pt);
\filldraw (1.2,0) circle (1.1pt);
\filldraw (0.6,0.3) circle (1.1pt);
\filldraw (0.6,0.6) circle (1.1pt);
\end{tikzpicture}
}
\def \Eseven{
\begin{tikzpicture}[baseline=-0.6ex]
\draw (0,0) -- (0.3,0) -- (0.6,0) -- (0.9,0) -- (1.2,0) -- (1.5,0);
\draw (0.9,0) -- (0.9,0.3);
\filldraw (0,0) circle (1.1pt);
\filldraw (0.3,0) circle (1.1pt);
\filldraw (0.6,0) circle (1.1pt);
\filldraw (0.9,0) circle (1.1pt);
\filldraw (1.2,0) circle (1.1pt);
\filldraw (1.5,0) circle (1.1pt);
\filldraw (0.9,0.3) circle (1.1pt);
\end{tikzpicture}
}
\def \Ehatseven{
\begin{tikzpicture}[baseline=-0.6ex]
\draw (0,0) -- (0.3,0) -- (0.6,0) -- (0.9,0) -- (1.2,0) -- (1.5,0) -- (1.8,0);
\draw (0.9,0) -- (0.9,0.3);
\filldraw (0,0) circle (1.1pt);
\filldraw (0.3,0) circle (1.1pt);
\filldraw (0.6,0) circle (1.1pt);
\filldraw (0.9,0) circle (1.1pt);
\filldraw (1.2,0) circle (1.1pt);
\filldraw (1.5,0) circle (1.1pt);
\filldraw (1.8,0) circle (1.1pt);
\filldraw (0.9,0.3) circle (1.1pt);
\end{tikzpicture}
}
\def \Eeight{
\begin{tikzpicture}[baseline=-0.6ex]
\draw (0,0) -- (0.3,0) -- (0.6,0) -- (0.9,0) -- (1.2,0) -- (1.5,0)-- (1.8,0);
\draw (0.6,0) -- (0.6,0.3);
\filldraw (0,0) circle (1.1pt);
\filldraw (0.3,0) circle (1.1pt);
\filldraw (0.6,0) circle (1.1pt);
\filldraw (0.9,0) circle (1.1pt);
\filldraw (1.2,0) circle (1.1pt);
\filldraw (1.5,0) circle (1.1pt);
\filldraw (1.8,0) circle (1.1pt);
\filldraw (0.6,0.3) circle (1.1pt);
\end{tikzpicture}
}
\def \Ehateight{
\begin{tikzpicture}[baseline=-0.6ex]
\draw (0,0) -- (0.3,0) -- (0.6,0) -- (0.9,0) -- (1.2,0) -- (1.5,0)-- (1.8,0) -- (2.1,0);
\draw (0.6,0) -- (0.6,0.3);
\filldraw (0,0) circle (1.1pt);
\filldraw (0.3,0) circle (1.1pt);
\filldraw (0.6,0) circle (1.1pt);
\filldraw (0.9,0) circle (1.1pt);
\filldraw (1.2,0) circle (1.1pt);
\filldraw (1.5,0) circle (1.1pt);
\filldraw (1.8,0) circle (1.1pt);
\filldraw (2.1,0) circle (1.1pt);
\filldraw (0.6,0.3) circle (1.1pt);
\end{tikzpicture}
}
\def \Pthree{
\begin{tikzpicture}[baseline=-0.6ex]
\draw (0,0) -- (0.3,0) -- (0.6,0);
\filldraw (0,0) circle (1.1pt);
\filldraw (0.3,0) circle (1.1pt);
\filldraw (0.6,0) circle (1.1pt);
\end{tikzpicture}
}
\def\KKKKPPr{
\begin{tikzpicture}[baseline=-0.6ex]
\draw (0,0) -- (0.3,0.3) -- (0.6,0) -- (0.9,0) -- (1.2,0);
\draw (0,0) -- (0.3,-0.3) -- (0.6,0);
\draw (0.3,0.3) -- (0.3,-0.3);
\draw (0,0) -- (0.6,0);
\filldraw (0,0) circle (1.1pt);
\filldraw (0.3,0.3) circle (1.1pt);
\filldraw (0.3,-0.3) circle (1.1pt);
\node[root] at (0.6,0) {};
\filldraw (0.9,0) circle (1.1pt);
\filldraw (1.2,0) circle (1.1pt);
\end{tikzpicture}
}
\def\KKKPPPr{
\begin{tikzpicture}[baseline=-0.6ex]
\draw (0.3,0) -- (0.6,0) -- (0.9,0) -- (1.2,0);
\draw (0.3,0) -- (0,0.3) -- (0,-0.3) -- (0.3,0);
\filldraw (0,0.3) circle (1.1pt);
\filldraw (0,-0.3) circle (1.1pt);
\node [root] at (0.3,0) {};
\filldraw (0.6,0) circle (1.1pt);
\filldraw (0.9,0) circle (1.1pt);
\filldraw (1.2,0) circle (1.1pt);
\end{tikzpicture}
}
\def\KKKPPPrl{
\begin{tikzpicture}[vertex/.style = {circle, fill, inner sep=2pt, outer sep=0pt},
every edge quotes/.style = {auto=left, sloped, font=\scriptsize, inner sep=1pt}]
\node[vertex,red] (o) at (1,0)     [label=above: $o$] {};
\node[vertex] (v1) at (0,1)     [label=left: $v_1$]  {};
\node[vertex] (v2) at (0,-1)     [label=left: $v_2$]  {};
\node[vertex] (u1) at (2,0)       [label=above: $u_1$] {};
\node[vertex] (u2) at (3,0)   [label=above: $u_2$] {};
\node[vertex] (u3) at (4,0)       [label=above: $u_3$] {};

\draw (o) -- (v1) -- (v2) -- (o) -- (u1) -- (u2) -- (u3);
\end{tikzpicture}
}
\def\KKKPPPP{
\begin{tikzpicture}[baseline=-0.6ex]
\draw (0.3,0) -- (0.6,0) -- (0.9,0) -- (1.2,0) -- (1.5,0);
\draw (0.3,0) -- (0,0.3) -- (0,-0.3) -- (0.3,0);
\filldraw (0,0.3) circle (1.1pt);
\filldraw (0,-0.3) circle (1.1pt);
\filldraw (0.3,0) circle (1.1pt);
\filldraw (0.6,0) circle (1.1pt);
\filldraw (0.9,0) circle (1.1pt);
\filldraw (1.2,0) circle (1.1pt);
\filldraw (1.5,0) circle (1.1pt);
\end{tikzpicture}
}
\def\KKKPPFF{
\begin{tikzpicture}[baseline=-0.6ex]
\draw (0.3,0) -- (0.6,0) -- (0.9,0) -- (1.2,0.3); 
\draw (0.9,0) -- (1.2,-0.3);
\draw (0.3,0) -- (0,0.3) -- (0,-0.3) -- (0.3,0);
\filldraw (0,0.3) circle (1.1pt);
\filldraw (0,-0.3) circle (1.1pt);
\filldraw (0.3,0) circle (1.1pt);
\filldraw (0.6,0) circle (1.1pt);
\filldraw (0.9,0) circle (1.1pt);
\filldraw (1.2,0.3) circle (1.1pt);
\filldraw (1.2,-0.3) circle (1.1pt);
\end{tikzpicture}
}
\def\KKKPKKK{
\begin{tikzpicture}[baseline=-0.6ex]
\draw (0.3,0) -- (0.6,0) -- (0.9,0) -- (1.2,0.3)  -- (1.2,-0.3) --(0.9,0);
\draw (0.3,0) -- (0,0.3) -- (0,-0.3) -- (0.3,0);
\filldraw (0,0.3) circle (1.1pt);
\filldraw (0,-0.3) circle (1.1pt);
\filldraw (0.3,0) circle (1.1pt);
\filldraw (0.6,0) circle (1.1pt);
\filldraw (0.9,0) circle (1.1pt);
\filldraw (1.2,0.3) circle (1.1pt);
\filldraw (1.2,-0.3) circle (1.1pt);
\end{tikzpicture}
}
\def\DsPPrs{
\begin{tikzpicture}[baseline=-0.6ex]
\draw (0,0) -- (0.3,0.3) -- (0.6,0) -- (0.9,0) -- (1.2,0);
\draw (0,0) -- (0.3,-0.3) -- (0.6,0);
\draw (0.3,0.3) -- (0.3,-0.3);
\filldraw (0,0) circle (1.1pt);
\filldraw (0.3,0.3) circle (1.1pt);
\filldraw (0.3,-0.3) circle (1.1pt);
\node [root] at (0.6,0) {};
\filldraw (0.9,0) circle (1.1pt);
\filldraw (1.2,0) circle (1.1pt);
\end{tikzpicture}
}
\def\DsPPrt{
\begin{tikzpicture}[baseline=-0.6ex]
\draw (0,0) -- (0.3,0.3) -- (0.6,0) -- (0.9,0) -- (1.2,0);
\draw (0,0) -- (0.3,-0.3) -- (0.6,0);
\draw (0.3,0.3) -- (0.3,-0.3);
\filldraw (0,0) circle (1.1pt);
\node [root] at (0.3,0.3) {};
\filldraw (0.3,-0.3) circle (1.1pt);
\filldraw (0.6,0) circle (1.1pt);
\filldraw (0.9,0) circle (1.1pt);
\filldraw (1.2,0) circle (1.1pt);
\end{tikzpicture}
}
\def\DsPPP{
\begin{tikzpicture}[baseline=-0.6ex]
\draw (0,0) -- (0.3,0.3) -- (0.6,0) -- (0.9,0) -- (1.2,0) -- (1.5,0);
\draw (0,0) -- (0.3,-0.3) -- (0.6,0);
\draw (0.3,0.3) -- (0.3,-0.3);
\filldraw (0,0) circle (1.1pt);
\filldraw (0.3,0.3) circle (1.1pt);
\filldraw (0.3,-0.3) circle (1.1pt);
\filldraw (0.6,0) circle (1.1pt);
\filldraw (0.9,0) circle (1.1pt);
\filldraw (1.2,0) circle (1.1pt);
\filldraw (1.5,0) circle (1.1pt);
\end{tikzpicture}
}
\def\DtPPrt{
\begin{tikzpicture}[baseline=-0.6ex]
\draw (0,0) -- (0.3,0.3) -- (0.6,0) -- (0.9,0) -- (1.2,0);
\draw (0,0) -- (0.3,-0.3) -- (0.6,0);
\draw (0,0) -- (0.6,0);
\filldraw (0,0) circle (1.1pt);
\filldraw (0.3,0.3) circle (1.1pt);
\filldraw (0.3,-0.3) circle (1.1pt);
\node [root] at (0.6,0) {};
\filldraw (0.9,0) circle (1.1pt);
\filldraw (1.2,0) circle (1.1pt);
\end{tikzpicture}
}
\def\DtPPP{
\begin{tikzpicture}[baseline=-0.6ex]
\draw (0,0) -- (0.3,0.3) -- (0.6,0) -- (0.9,0) -- (1.2,0) -- (1.5,0);
\draw (0,0) -- (0.3,-0.3) -- (0.6,0);
\draw (0,0) -- (0.6,0);
\filldraw (0,0) circle (1.1pt);
\filldraw (0.3,0.3) circle (1.1pt);
\filldraw (0.3,-0.3) circle (1.1pt);
\filldraw (0.6,0) circle (1.1pt);
\filldraw (0.9,0) circle (1.1pt);
\filldraw (1.2,0) circle (1.1pt);
\filldraw (1.5,0) circle (1.1pt);
\end{tikzpicture}
}
\def\KKKKP{
\begin{tikzpicture}[baseline=-0.6ex]
\draw (0,0) -- (0.3,0.3) -- (0.6,0) -- (0.9,0);
\draw (0,0) -- (0.3,-0.3) -- (0.6,0);
\draw (0.3,0.3) -- (0.3,-0.3);
\draw (0,0) -- (0.6,0);
\filldraw (0,0) circle (1.1pt);
\filldraw (0.3,0.3) circle (1.1pt);
\filldraw (0.3,-0.3) circle (1.1pt);
\filldraw (0.6,0) circle (1.1pt);
\filldraw (0.9,0) circle (1.1pt);
\end{tikzpicture}
}
\def\KKKKPr{
\begin{tikzpicture}[baseline=-0.6ex]
\draw (0,0) -- (0.3,0.3) -- (0.6,0) -- (0.9,0);
\draw (0,0) -- (0.3,-0.3) -- (0.6,0);
\draw (0.3,0.3) -- (0.3,-0.3);
\draw (0,0) -- (0.6,0);
\filldraw (0,0) circle (1.1pt);
\filldraw (0.3,0.3) circle (1.1pt);
\filldraw (0.3,-0.3) circle (1.1pt);
\node [root] at (0.6,0) {};
\filldraw (0.9,0) circle (1.1pt);
\end{tikzpicture}
}
\def\leftfork{
\begin{tikzpicture}[baseline=-0.6ex]
\filldraw (0,0.3) circle (1.1pt);
\filldraw (0,-0.3) circle (1.1pt);
\filldraw (0.3,0) circle (1.1pt);
\filldraw (0.6,0) circle (1.1pt);
\filldraw (0.9,0) circle (1.1pt);
\draw (0,0.3) -- (0.3,0) -- (0.6,0) -- (0.9,0);
\draw (0,-0.3) -- (0.3,0);
\end{tikzpicture}
}
\def\rightfork{
\begin{tikzpicture}[baseline=-0.6ex]
\filldraw (0,0) circle (1.1pt);
\filldraw (0.3,0) circle (1.1pt);
\filldraw (0.6,0) circle (1.1pt);
\filldraw (0.9,0.3) circle (1.1pt);
\filldraw (0.9,-0.3) circle (1.1pt);
\draw (0,0) -- (0.3,0) -- (0.6,0) -- (0.9,0.3);
\draw (0.6,0) -- (0.9,-0.3);
\end{tikzpicture}
}
\def\cherry{
\begin{tikzpicture}[baseline=-0.6ex]
\filldraw (0,0) circle (1.1pt);
\filldraw (0.3,0.3) circle (1.1pt);
\filldraw (0.3,-0.3) circle (1.1pt);
\draw (0,0) -- (0.3,0.3);
\draw (0,0) -- (0.3,-0.3);
\end{tikzpicture}
}
\def\tri{
\begin{tikzpicture}[baseline=-0.6ex]
\filldraw (0,0) circle (1.1pt);
\filldraw (0.3,0.3) circle (1.1pt);
\filldraw (0.3,-0.3) circle (1.1pt);
\draw (0,0) -- (0.3,0.3) -- (0.3,-0.3) -- (0,0);
\end{tikzpicture}
}
\def\PPPPPFF{
\begin{tikzpicture}[baseline=-0.6ex]
\draw (0,0) -- (0.3,0) -- (0.6,0) -- (0.9,0) -- (1.2,0) -- (1.5,0.3);
\draw (1.2,0) -- (1.5,-0.3);
\filldraw (0,0) circle (1.1pt);
\filldraw (0.3,0) circle (1.1pt);
\filldraw (0.6,0) circle (1.1pt);
\filldraw (0.9,0) circle (1.1pt);
\filldraw (1.2,0) circle (1.1pt);
\filldraw (1.5,0.3) circle (1.1pt);
\filldraw (1.5,-0.3) circle (1.1pt);
\end{tikzpicture}
}
\def\diamond{
\begin{tikzpicture}
\draw (0,0) -- (1,1) -- (2,0) -- (1,-1) -- (0,0);
\draw (1,1) -- (1,-1);
\filldraw (0,0) circle (2pt);
\filldraw (1,1) circle (2pt);
\filldraw (1,-1) circle (2pt);
\filldraw (2,0) circle (2pt);
\node at (1.3,1.1) {$t$};
\node at (2.3,0) {$s$};
\end{tikzpicture}
}
\def\figKKKKP{
\begin{tikzpicture}[scale=0.5]
\draw (0,0) -- (1,1) -- (2,0) -- (1,-1) -- (0,0);
\draw (1,1) -- (1,-1);
\draw (0,0) -- (2,0) -- (3.5,0);
\filldraw (0,0) circle (3pt);
\filldraw (1,1) circle (3pt);
\filldraw (1,-1) circle (3pt);
\filldraw (2,0) circle (3pt);
\filldraw (3.5,0) circle (3pt);
\node at (2,-0.5) {$o$};
\node at (3.5,-0.5) {$v$};
\end{tikzpicture}
}
\def\figSP{
\begin{tikzpicture}[scale=0.5]
\draw (1,0) -- (0.2,1);
\draw (1,0) -- (0.2,-1);
\draw (1,0) -- (1.5,1);
\draw (1,0) -- (1.5,-1);
\draw (1,0) -- (2.3,0) -- (3.3,0) -- (4.3,0) -- (5.3,0);
\filldraw (1,0) circle (3pt);
\filldraw (0.2,1) circle (3pt);
\filldraw (0.2,-1) circle (3pt);
\filldraw (1.5,1) circle (3pt);
\filldraw (1.5,-1) circle (3pt);
\filldraw (2.3,0) circle (3pt);
\filldraw (3.3,0) circle (3pt);
\filldraw (4.3,0) circle (3pt);
\filldraw (5.3,0) circle (3pt);
\node at (0.4,0) {$o$};
\node at (5.9,0) {$v$};
\end{tikzpicture}
}
\def\KKKKoGG{
\begin{tikzpicture}
\draw (5,0) ellipse (2.5cm and 1cm);
\draw (0,0) -- (1,1) -- (2,0) -- (1,-1) -- (0,0);
\draw (1,1) -- (1,-1);
\draw (0,0) -- (2,0) -- (3,0);
\draw (4,0.5) -- (4.5,-0.4);
\draw (5.5,-0.7) -- (6,0.1);
\filldraw (0,0) circle (2pt);
\filldraw (1,1) circle (2pt);
\filldraw (1,-1) circle (2pt);
\filldraw (2,0) circle (2pt);
\filldraw (3,0) circle (2pt);
\filldraw (4,0.5) circle (2pt);
\filldraw (4.5,-0.4) circle (2pt);
\filldraw (5.5,-0.7) circle (2pt);
\filldraw (6,0.1) circle (2pt);
\node at (3,0.3) {$u$};
\node at (3.5,0) {$\cdots$};
\node at (5,0) {$\cdots$};
\node at (2.1,0.3) {$o$};
\node at (8,0) {$G'$};
\end{tikzpicture}
}
\def\PP{
\begin{tikzpicture}[baseline=-0.6ex]
\draw (0,0) -- (0.3,0);
\filldraw (0,0) circle (1.1pt);
\filldraw (0.3,0) circle (1.1pt);
\end{tikzpicture}
}
\def\PPP{
\begin{tikzpicture}[baseline=-0.6ex]
\draw (0,0) -- (0.3,0) -- (0.6,0);
\filldraw (0,0) circle (1.1pt);
\filldraw (0.3,0) circle (1.1pt);
\filldraw (0.6,0) circle (1.1pt);
\end{tikzpicture}
}
\def\KKK{
\begin{tikzpicture}[baseline=-0.6ex]
\draw (0.3,0) -- (0,0.3) -- (0,-0.3) -- (0.3,0);
\filldraw (0,0.3) circle (1.1pt);
\filldraw (0,-0.3) circle (1.1pt);
\filldraw (0.3,0) circle (1.1pt);
\end{tikzpicture}
}
\def\PFF{
\begin{tikzpicture}[baseline=-0.6ex]
\draw (0.3,0) -- (0.6,0);
\draw (0.3,0) -- (0,0.3);
\draw (0,-0.3) -- (0.3,0);
\filldraw (0,0.3) circle (1.1pt);
\filldraw (0,-0.3) circle (1.1pt);
\filldraw (0.3,0) circle (1.1pt);
\filldraw (0.6,0) circle (1.1pt);
\end{tikzpicture}
}
\def\KKKP{
\begin{tikzpicture}[baseline=-0.6ex]
\draw (0.3,0) -- (0.6,0);
\draw (0.3,0) -- (0,0.3) -- (0,-0.3) -- (0.3,0);
\filldraw (0,0.3) circle (1.1pt);
\filldraw (0,-0.3) circle (1.1pt);
\filldraw (0.3,0) circle (1.1pt);
\filldraw (0.6,0) circle (1.1pt);
\end{tikzpicture}
}
\def\PPPP{
\begin{tikzpicture}[baseline=-0.6ex]
\draw (0,0) -- (0.3,0) -- (0.6,0) -- (0.9,0);
\filldraw (0,0) circle (1.1pt);
\filldraw (0.3,0) circle (1.1pt);
\filldraw (0.6,0) circle (1.1pt);
\filldraw (0.9,0) circle (1.1pt);
\end{tikzpicture}
}
\def\PPPPP{
\begin{tikzpicture}[baseline=-0.6ex]
\draw (0,0) -- (0.3,0) -- (0.6,0) -- (0.9,0) -- (1.2,0);
\filldraw (0,0) circle (1.1pt);
\filldraw (0.3,0) circle (1.1pt);
\filldraw (0.6,0) circle (1.1pt);
\filldraw (0.9,0) circle (1.1pt);
\filldraw (1.2,0) circle (1.1pt);
\end{tikzpicture}
}
\def\Dsm{
\begin{tikzpicture}[baseline=-0.6ex]
\draw (0,0) -- (0.3,0.3) -- (0.6,0);
\draw (0,0) -- (0.3,-0.3) -- (0.6,0);
\draw (0,0) -- (0.6,0);
\filldraw (0,0) circle (1.1pt);
\filldraw (0.3,0.3) circle (1.1pt);
\filldraw (0.3,-0.3) circle (1.1pt);
\filldraw (0.6,0) circle (1.1pt);
\end{tikzpicture}
}
\def\KKKPP{
\begin{tikzpicture}[baseline=-0.6ex]
\draw (0.3,0) -- (0.6,0) -- (0.9,0);
\draw (0.3,0) -- (0,0.3) -- (0,-0.3) -- (0.3,0);
\filldraw (0,0.3) circle (1.1pt);
\filldraw (0,-0.3) circle (1.1pt);
\filldraw (0.3,0) circle (1.1pt);
\filldraw (0.6,0) circle (1.1pt);
\filldraw (0.9,0) circle (1.1pt);
\end{tikzpicture}
}
\def\PKKKP{
\begin{tikzpicture}[baseline=-0.6ex]
\draw (0,0) -- (0.3,0.3) -- (0.6,0.3);
\draw (0,0) -- (0.3,-0.3) -- (0.6,-0.3);
\draw (0.3,0.3) -- (0.3,-0.3);
\filldraw (0,0) circle (1.1pt);
\filldraw (0.3,0.3) circle (1.1pt);
\filldraw (0.3,-0.3) circle (1.1pt);
\filldraw (0.6,0.3) circle (1.1pt);
\filldraw (0.6,-0.3) circle (1.1pt);
\end{tikzpicture}
}
\def\KKKFF{
\begin{tikzpicture}[baseline=-0.6ex]
\draw (0.3,0) -- (0.6,0.3);
\draw (0.3,0) -- (0.6,-0.3);
\draw (0.3,0) -- (0,0.3) -- (0,-0.3) -- (0.3,0);
\filldraw (0,0.3) circle (1.1pt);
\filldraw (0,-0.3) circle (1.1pt);
\filldraw (0.3,0) circle (1.1pt);
\filldraw (0.6,0.3) circle (1.1pt);
\filldraw (0.6,-0.3) circle (1.1pt);
\end{tikzpicture}
}
\def\SSSSS{
\begin{tikzpicture}[baseline=-0.6ex]
\draw (0.3,0) -- (0.6,0.3);
\draw (0.3,0) -- (0.6,-0.3);
\draw (0.3,0) -- (0,0.3);
\draw (0.3,0) -- (0,-0.3);
\filldraw (0,0.3) circle (1.1pt);
\filldraw (0,-0.3) circle (1.1pt);
\filldraw (0.3,0) circle (1.1pt);
\filldraw (0.6,0.3) circle (1.1pt);
\filldraw (0.6,-0.3) circle (1.1pt);
\end{tikzpicture}
}
\def\SSSSSsm{
\begin{tikzpicture}[baseline=-0.6ex]
\draw (0.2,0) -- (0.4,0.2);
\draw (0.2,0) -- (0.4,-0.2);
\draw (0.2,0) -- (0,0.2);
\draw (0.2,0) -- (0,-0.2);
\filldraw (0,0.2) circle (1.1pt);
\filldraw (0,-0.2) circle (1.1pt);
\filldraw (0.2,0) circle (1.1pt);
\filldraw (0.4,0.2) circle (1.1pt);
\filldraw (0.4,-0.2) circle (1.1pt);
\end{tikzpicture}
}
\def\SSSSSPP{
\begin{tikzpicture}[baseline=-0.6ex]
\draw (0.2,0) -- (0.4,0.3);
\draw (0.2,0) -- (0.4,-0.3);
\draw (0.2,0) -- (0,0.3);
\draw (0,-0.3) -- (0.2,0) -- (0.5,0) -- (0.8,0);
\filldraw (0,0.3) circle (1.1pt);
\filldraw (0,-0.3) circle (1.1pt);
\filldraw (0.2,0) circle (1.1pt);
\filldraw (0.4,0.3) circle (1.1pt);
\filldraw (0.4,-0.3) circle (1.1pt);
\filldraw (0.5,0) circle (1.1pt);
\filldraw (0.8,0) circle (1.1pt);
\end{tikzpicture}
}
\def\SSSSSPPr{
\begin{tikzpicture}[baseline=-0.6ex]
\draw (0.2,0) -- (0.4,0.3);
\draw (0.2,0) -- (0.4,-0.3);
\draw (0.2,0) -- (0,0.3);
\draw (0,-0.3) -- (0.2,0) -- (0.5,0) -- (0.8,0);
\filldraw (0,0.3) circle (1.1pt);
\filldraw (0,-0.3) circle (1.1pt);
\node [root] at (0.2,0) {};
\filldraw (0.4,0.3) circle (1.1pt);
\filldraw (0.4,-0.3) circle (1.1pt);
\filldraw (0.5,0) circle (1.1pt);
\filldraw (0.8,0) circle (1.1pt);
\end{tikzpicture}
}
\def\treekernelA{
\begin{tikzpicture}[baseline=-0.6ex]
\draw (0.2,0) -- (0.4,0.3);
\draw (0.2,0) -- (0.4,-0.3);
\draw (0.2,0) -- (0,0.3);
\draw (0,-0.3) -- (0.2,0) -- (0.5,0) -- (0.8,0) -- (1.1,0) -- (1.4,0) -- (1.7,0);
\filldraw (0,0.3) circle (1.1pt);
\filldraw (0,-0.3) circle (1.1pt);
\node [root] at (0.2,0) {};
\filldraw (0.4,0.3) circle (1.1pt);
\filldraw (0.4,-0.3) circle (1.1pt);
\filldraw (0.5,0) circle (1.1pt);
\filldraw (0.8,0) circle (1.1pt);
\filldraw (1.1,0) circle (1.1pt);
\filldraw (1.4,0) circle (1.1pt);
\filldraw (1.7,0) circle (1.1pt);
\end{tikzpicture}
}
\def\treekernelB{
\begin{tikzpicture}[baseline=-0.6ex]
\draw (0.3,0) -- (0.5,0.3);
\draw (0.3,0) -- (0.5,-0.3);
\draw (0.3,0) -- (0.1,0.3);
\draw (0.3,0) -- (0.1,-0.3);
\draw (0,0) -- (0.3,0) -- (0.6,0) -- (0.9,0) -- (1.2,0) -- (1.5,0);
\filldraw (0.1,0.3) circle (1.1pt);
\filldraw (0.1,-0.3) circle (1.1pt);
\filldraw (0,0) circle (1.1pt);
\node [root] at (0.3,0) {};
\filldraw (0.5,0.3) circle (1.1pt);
\filldraw (0.5,-0.3) circle (1.1pt);
\filldraw (0.6,0) circle (1.1pt);
\filldraw (0.9,0) circle (1.1pt);
\filldraw (1.2,0) circle (1.1pt);
\filldraw (1.5,0) circle (1.1pt);
\end{tikzpicture}
}
\def\treekernelC{
\begin{tikzpicture}[baseline=-0.6ex]
\draw (0.2,0) -- (0.4,0.3) -- (0.7,0.3);
\draw (0.2,0) -- (0.4,-0.3);
\draw (0.2,0) -- (0,0.3);
\draw (0,-0.3) -- (0.2,0) -- (0.5,0) -- (0.8,0) -- (1.1,0) -- (1.4,0);
\filldraw (0,0.3) circle (1.1pt);
\filldraw (0,-0.3) circle (1.1pt);
\node [root] at (0.2,0) {};
\filldraw (0.4,0.3) circle (1.1pt);
\filldraw (0.7,0.3) circle (1.1pt);
\filldraw (0.4,-0.3) circle (1.1pt);
\filldraw (0.5,0) circle (1.1pt);
\filldraw (0.8,0) circle (1.1pt);
\filldraw (1.1,0) circle (1.1pt);
\filldraw (1.4,0) circle (1.1pt);
\end{tikzpicture}
}
\def\FFPPP{
\begin{tikzpicture}[baseline=-0.6ex]
\draw (0,0.3) -- (0.3,0) -- (0.6,0) -- (0.9,0);
\draw (0,-0.3) -- (0.3,0);
\filldraw (0,0.3) circle (1.1pt);
\filldraw (0,-0.3) circle (1.1pt);
\filldraw (0.3,0) circle (1.1pt);
\filldraw (0.6,0) circle (1.1pt);
\filldraw (0.9,0) circle (1.1pt);
\end{tikzpicture}
}
\def\DsP{
\begin{tikzpicture}[baseline=-0.6ex]
\draw (0,0) -- (0.3,0.3) -- (0.6,0) -- (0.9,0);
\draw (0,0) -- (0.3,-0.3) -- (0.6,0);
\draw (0.3,0.3) -- (0.3,-0.3);
\filldraw (0,0) circle (1.1pt);
\filldraw (0.3,0.3) circle (1.1pt);
\filldraw (0.3,-0.3) circle (1.1pt);
\filldraw (0.6,0) circle (1.1pt);
\filldraw (0.9,0) circle (1.1pt);
\end{tikzpicture}
}
\def\DtP{
\begin{tikzpicture}[baseline=-0.6ex]
\draw (0,0) -- (0.3,0.3) -- (0.6,0) -- (0.9,0);
\draw (0,0) -- (0.3,-0.3) -- (0.6,0);
\draw (0,0) -- (0.6,0);
\filldraw (0,0) circle (1.1pt);
\filldraw (0.3,0.3) circle (1.1pt);
\filldraw (0.3,-0.3) circle (1.1pt);
\filldraw (0.6,0) circle (1.1pt);
\filldraw (0.9,0) circle (1.1pt);
\end{tikzpicture}
}
\def\CCCCP{
\begin{tikzpicture}[baseline=-0.6ex]
\draw (0,0) -- (0.3,0.3) -- (0.6,0) -- (0.9,0);
\draw (0,0) -- (0.3,-0.3) -- (0.6,0);
\filldraw (0,0) circle (1.1pt);
\filldraw (0.3,0.3) circle (1.1pt);
\filldraw (0.3,-0.3) circle (1.1pt);
\filldraw (0.6,0) circle (1.1pt);
\filldraw (0.9,0) circle (1.1pt);
\end{tikzpicture}
}
\def\KKKKPP{
\begin{tikzpicture}[baseline=-0.6ex]
\draw (0,0) -- (0.3,0.3) -- (0.6,0) -- (0.9,0) -- (1.2,0);
\draw (0,0) -- (0.3,-0.3) -- (0.6,0);
\draw (0.3,0.3) -- (0.3,-0.3);
\draw (0,0) -- (0.6,0);
\filldraw (0,0) circle (1.1pt);
\filldraw (0.3,0.3) circle (1.1pt);
\filldraw (0.3,-0.3) circle (1.1pt);
\filldraw (0.6,0) circle (1.1pt);
\filldraw (0.9,0) circle (1.1pt);
\filldraw (1.2,0) circle (1.1pt);
\end{tikzpicture}
}
\def\KKKPPP{
\begin{tikzpicture}[baseline=-0.6ex]
\draw (0.3,0) -- (0.6,0) -- (0.9,0) -- (1.2,0);
\draw (0.3,0) -- (0,0.3) -- (0,-0.3) -- (0.3,0);
\filldraw (0,0.3) circle (1.1pt);
\filldraw (0,-0.3) circle (1.1pt);
\filldraw (0.3,0) circle (1.1pt);
\filldraw (0.6,0) circle (1.1pt);
\filldraw (0.9,0) circle (1.1pt);
\filldraw (1.2,0) circle (1.1pt);
\end{tikzpicture}
}
\def\DsPP{
\begin{tikzpicture}[baseline=-0.6ex]
\draw (0,0) -- (0.3,0.3) -- (0.6,0) -- (0.9,0) -- (1.2,0);
\draw (0,0) -- (0.3,-0.3) -- (0.6,0);
\draw (0.3,0.3) -- (0.3,-0.3);
\filldraw (0,0) circle (1.1pt);
\filldraw (0.3,0.3) circle (1.1pt);
\filldraw (0.3,-0.3) circle (1.1pt);
\filldraw (0.6,0) circle (1.1pt);
\filldraw (0.9,0) circle (1.1pt);
\filldraw (1.2,0) circle (1.1pt);
\end{tikzpicture}
}
\def\PKKKPP{
\begin{tikzpicture}[baseline=-0.6ex]
\draw (0,0) -- (0.3,0.3) -- (0.6,0.3) -- (0.9,0.3);
\draw (0,0) -- (0.3,-0.3) -- (0.6,-0.3);
\draw (0.3,0.3) -- (0.3,-0.3);
\filldraw (0,0) circle (1.1pt);
\filldraw (0.3,0.3) circle (1.1pt);
\filldraw (0.3,-0.3) circle (1.1pt);
\filldraw (0.6,0.3) circle (1.1pt);
\filldraw (0.6,-0.3) circle (1.1pt);
\filldraw (0.9,0.3) circle (1.1pt);
\end{tikzpicture}
}
\def\DtPP{
\begin{tikzpicture}[baseline=-0.6ex]
\draw (0,0) -- (0.3,0.3) -- (0.6,0) -- (0.9,0) -- (1.2,0);
\draw (0,0) -- (0.3,-0.3) -- (0.6,0);
\draw (0,0) -- (0.6,0);
\filldraw (0,0) circle (1.1pt);
\filldraw (0.3,0.3) circle (1.1pt);
\filldraw (0.3,-0.3) circle (1.1pt);
\filldraw (0.6,0) circle (1.1pt);
\filldraw (0.9,0) circle (1.1pt);
\filldraw (1.2,0) circle (1.1pt);
\end{tikzpicture}
}
\def\KKKFPF{
\begin{tikzpicture}[baseline=-0.6ex]
\draw (0.3,0) -- (0.6,0.3) -- (0.9,0.3);
\draw (0.3,0) -- (0.6,-0.3);
\draw (0.3,0) -- (0,0.3) -- (0,-0.3) -- (0.3,0);
\filldraw (0,0.3) circle (1.1pt);
\filldraw (0,-0.3) circle (1.1pt);
\filldraw (0.3,0) circle (1.1pt);
\filldraw (0.6,0.3) circle (1.1pt);
\filldraw (0.6,-0.3) circle (1.1pt);
\filldraw (0.9,0.3) circle (1.1pt);
\end{tikzpicture}
}
\def\deltastar{
\begin{tikzpicture}[baseline=-0.6ex]
\draw (0,0) -- (0.3,0) -- (0.6,0.3) -- (0.9,0.3);
\draw (0.3,0) -- (0.6,-0.3) -- (0.9,-0.3);
\draw (0.6,0.3) -- (0.6,-0.3);
\filldraw (0,0) circle (1.1pt);
\filldraw (0.3,0) circle (1.1pt);
\filldraw (0.6,0.3) circle (1.1pt);
\filldraw (0.6,-0.3) circle (1.1pt);
\filldraw (0.9,0.3) circle (1.1pt);
\filldraw (0.9,-0.3) circle (1.1pt);
\end{tikzpicture}
}
\def\PKKKKP{
\begin{tikzpicture}[baseline=-0.6ex]
\draw (0,0) -- (0.3,0) -- (0.6,0.3) -- (0.9,0) -- (1.2,0);
\draw (0.3,0) -- (0.6,-0.3) -- (0.9,0);
\draw (0.6,0.3) -- (0.6,-0.3);
\draw (0.3,0) -- (0.9,0);
\filldraw (0,0) circle (1.1pt);
\filldraw (0.3,0) circle (1.1pt);
\filldraw (0.6,0.3) circle (1.1pt);
\filldraw (0.6,-0.3) circle (1.1pt);
\filldraw (0.9,0) circle (1.1pt);
\filldraw (1.2,0) circle (1.1pt);
\end{tikzpicture}
}
\begin{document}

\title{The least balanced graphs and trees}

\author{P\'{e}ter Csikv\'{a}ri}
\thanks{This work was supported by the MTA-R\'enyi Counting in Sparse Graphs ``Momentum'' Research Group. The first author was also supported by the Dynasnet ERC Synergy project (ERC-2018-SYG 810115) and the Hungarian National Research, Development and Innovation Office (Advanced grant 153378).}
\address{HUN-REN Alfr\'ed R\'enyi Institute of Mathematics, Budapest, Hungary and ELTE: E\"otv\"os Lor\'and University, Mathematics Institute, Department of Computer Science}
\email{peter.csikvari@gmail.com}

\author{Viktor Harangi}
\thanks{The second author was also supported by NRDI 
grant KKP 138270, and the Hungarian Academy of Sciences (J\'anos Bolyai Scholarship).}
\address{HUN-REN Alfr\'ed R\'enyi Institute of Mathematics, Budapest, Hungary} 
\email{harangi@renyi.hu}


\begin{abstract}
Given a connected graph, the principal eigenvector of the adjacency matrix (often called the Perron vector) can be used to assign positive weights to the vertices. A natural way to measure the homogeneousness of this vector is by considering the ratio of its $\ell^1$ and $\ell^2$ norms. 

It is easy to see that the most balanced graphs in this sense (i.e., the ones with the largest ratio) are the regular graphs. What can we say about the least balanced (or most centralized) graphs with the smallest ratio? It was conjectured by R\"ucker, R\"ucker and Gutman that, for any given $n \geq 6$, among $n$-vertex connected graphs the smallest ratio is achieved by the complete graph $K_4$ with a single path $P_{n-4}$ attached to one of its vertices. In this paper we confirm this conjecture.

We also verify the analogous conjecture for trees: for any given $n \geq 8$, among $n$-vertex trees the smallest ratio is achieved by the star graph $S_5$ with a path $P_{n-5}$ attached to its central vertex.
\end{abstract}

\maketitle

\section{Introduction} \label{sec:intro}

Let $G$ be a finite simple connected graph with vertex set $V(G)$. Its adjacency matrix $A_G$ is symmetric, and hence by the Frobenius--Perron theorem we know that $x_v>0$ for the principal eigenvector $\x=\big( x_v \big)_{v \in V(G)}$ corresponding to the largest eigenvalue. 

It is standard to use the entries of the principal eigenvector as a global ranking of the vertices expressing their importance or centrality. Depending on the context, the rank may express the amount that a vertex contributes to the graph's ability to spread a disease or information, or propagate influence. With this in mind, it is often helpful to know how large the relevant part of the graph is. One of the most standard ways to measure the size of the ``effective support'' of a vector (and hence quantifying how sparse/concentrated versus how spread-out/homogeneous the vector is) is by considering (the square of) the ratio of the $\ell^1$ and $\ell^2$ norms. This motivates the introduction of the following quantity.
\begin{Def} \label{def:effective_order}
We define the \emph{effective order} of a graph $G$ as 
\begin{equation} \label{eq:G_ratio_def}
\Gamma_G := \frac{\| \x\|_1^2}{\| \x \|_2^2} 
= \frac{\big( \sum_{v \in V(G)} x_v \big)^2}{\sum_{v \in V(G)} x_v^2} ,
\end{equation}
where $\x$ is the principal eigenvector of the adjacency matrix of $G$. 
\end{Def}
We propose the name \emph{effective order} because, in many respects, this quantity more accurately reflects the true size of the graph than merely considering the order of $G$ (i.e., the number of vertices). Loosely speaking, $\Ga_G$ counts the number of influential vertices in $G$; smaller $\Ga_G$ means that $G$ is more centralized, while larger $\Ga_G$ indicates a more balanced graph. Hence the effective order is a robust indicator of networks showing how many nodes have true relevance. This is a single number, yet it reveals a great deal about the nature of the network at hand. This quantity (in slightly different forms) has been proposed and studied in a number of papers; see Section~\ref{sec:background} for details. 

In this paper we work out methods for estimating the effective order $\Ga_G$. Our most notable tool uses the resolvent matrix. We present our results in a general form with the primary goal to settle two conjectures regarding the smallest possible values of $\Ga_G$ among connected graphs and trees. 

First of all, note that $\Gamma_G \leq |V(G)|$ by the Cauchy--Schwarz inequality. Therefore, the effective order is always upper bounded by the order, and equality holds if and only if the graph is regular (i.e., each degree is the same). This means that, unsurprisingly, regular graphs are the most balanced ones. What can we say about the minimal possible value of $\Gamma_G$ among $n$-vertex connected graphs? \textbf{Which connected graphs are the most centralized (or the least balanced)?} 
It was conjectured by R\"ucker, R\"ucker and Gutman \cite{rucker2002kites} that for any given $n \geq 6$, the minimum is given by the graph consisting of $K_4$ and a pendant path. 
The same conjecture was also proposed by David Gregory independently (S.~Cioab\u{a}, personal communication), and it gained some attention recently, see \cite{cioaba2021principal,clark2024comparing,lele2023unsolved}. 

We use the standard notation $P_k$ for the path graph with $k$ vertices and $P_\infty$ for the infinite (one-ended) path. Given a graph $H$ with a vertex $v \in V(H)$ we use the notation $H +_v P_k$ for the graph obtained from the disjoint union of $H$ and $P_k$ by adding an edge connecting $v$ and an endpoint of $P_k$. Note that $H +_v P_k$ has $|V(H)|+k$ vertices. We define $H +_v P_\infty$ similarly. If $H$ is a complete graph, then we may omit $v$ from the notation. Also, if $H$ has a unique largest-degree vertex $v$, then $H+ P_k$ will simply mean $H +_v P_k$.

\begin{Conj}[R\"ucker, R\"ucker and Gutman \cite{rucker2002kites}] \label{conj:minimal_Ga}
If $n\geq 6$, then among $n$-vertex connected graphs, $\Gamma_G$ is minimized by 
\[ G = \, \KKKKPP \cdots \PPP \,= K_4 + P_{n-4}. \]
\end{Conj}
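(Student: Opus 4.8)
\medskip
\noindent\textbf{Proof proposal.} The plan is to reduce a $\Gamma$-minimizer to a graph consisting of a bounded ``head'' $H$ together with one long pendant path, and then to pin down $H$ by a finite comparison. Since $\Gamma_G$ is unchanged under rescaling of $\x$, normalize $\|\x\|_2=1$, so that $\Gamma_G=\|\x\|_1^2$. Throughout I would use the standard perturbation formulas for the Perron pair $(\lambda,\x)$ of $A_G$ under edge additions, deletions and rerouting (in particular $\partial\lambda/\partial A_{uv}=2x_ux_v$ when $\|\x\|_2=1$), together with the Rayleigh bounds $\lambda\ge\sqrt{\Delta(G)}$ and $\lambda\ge 2|E(G)|/|V(G)|$.

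\textbf{Step 1: localization and reduction to a broom.} Let $G$ minimize $\Gamma$ among $n$-vertex connected graphs. Since $K_4+P_{n-4}$ is a competitor, $\Gamma_G\le\Gamma_{K_4+P_{n-4}}$, which is bounded above by an absolute constant (indeed $\Gamma_{K_4+P_{n-4}}<\Gamma_{K_4+P_\infty}\approx 5.10$). Hence $\x$ is highly localized; moreover $\lambda>2$, since graphs with $\lambda\le 2$ are classified (paths, cycles, and the other Smith graphs) and all of them have $\Gamma_G\asymp n$. I would then show, using the perturbation formulas, that none of the following can occur in $G$, because each admits a modification that keeps $|V|=n$ and strictly decreases $\Gamma$: (i) two pendant paths sharing a vertex (merge them into one pendant path of the combined length); (ii) an induced chain of degree-$2$ vertices joining two nontrivial pieces that is not itself the pendant path (cut it and reattach so as to lengthen the pendant path); (iii) an edge whose deletion keeps the graph connected and which lies outside a bounded neighborhood of the localization core (delete it and reattach a leaf). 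In each case the change in $\lambda$ is governed by products $x_ux_v$ of exponentially small eigenvector entries, while the competition between the leading- and next-order effects on $\|\x\|_1$ and $\|\x\|_2$ is shown to break the favorable way --- this is the Li--Feng lemma / Kelmans-transformation circle of ideas, adapted from $\lambda$ to $\Gamma$. The conclusion is that $G=H+_v P_m$ with $|V(H)|$ bounded by an absolute constant and $m=n-|V(H)|$.

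\textbf{Step 2: identifying the head.} Attaching an \emph{infinite} path is the natural limiting device: for any finite rooted graph $(H,v)$, the graph $H+_v P_\infty$ has spectral radius $\lambda>2$, its path entries decay geometrically with ratio $t$ solving $t+1/t=\lambda$, and $\Gamma_{H+_v P_\infty}$ is an explicit rational expression in the finitely many parameters describing $H$ and in $t$, since the path contributes only geometric sums. Moreover $\Gamma_{H+_v P_m}\to\Gamma_{H+_v P_\infty}$ and, by a path-lengthening perturbation computation, $m\mapsto\Gamma_{H+_v P_m}$ is eventually monotone, so the limit plus a bounded number of short cases controls all $n$. It then remains to run through the finitely many rooted heads on few vertices and compare: among cliques, $(K_4,v)$ beats $K_3$, $K_5,\dots$; and $K_4$ with an extra edge, with an extra pendant vertex, with the path attached at a vertex of a $K_4$-minus-an-edge, and the remaining small candidates, are all strictly worse --- these are precisely the gadgets one must eliminate. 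The tree statement follows the same scheme: the tree-preserving version of Step 1 leaves the one-parameter family consisting of a vertex with $k$ pendant leaves and one pendant path, i.e.\ $S_{k+1}+_v P_m$, and the comparison singles out $k=4$, giving $S_5+P_{n-5}$.

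\textbf{The main obstacle.} Two points are genuinely hard. First, the local operations in Step 1 are not monotonicity statements for $\lambda$; proving that each strictly decreases $\Gamma$ means carrying the perturbation analysis far enough to compare the opposing contributions to $\|\x\|_1$ and $\|\x\|_2$ and to extract a sign-definite inequality, uniformly over every configuration one needs to exclude. Second, the final comparisons are razor-thin: in the tree case the $S_5$- and $S_6$-brooms differ in $\Gamma$ by well under a percent ($\approx 7.46$ versus $\approx 7.50$ in the infinite-path limit), and several graph gadgets sit just above $\Gamma_{K_4+P_\infty}$, so the comparisons must be done either by exact manipulation of the polynomials defining the relevant $\lambda$'s or by certified numerical bounds, uniformly in $n$; the handful of cases with $n$ just above the stated thresholds ($6$ for graphs, $8$ for trees) then have to be settled separately, which is presumably why the thresholds take exactly these values.
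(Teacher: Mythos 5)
Your Step 1 is where the whole difficulty of this conjecture lives, and it is asserted rather than proved; worse, at least one of the proposed sign-definite local moves is actually false. Take operation (i): merging two pendant paths that share a vertex preserves tree-ness, and the tree minimizer $S_5+P_{n-5}$ has several pendant paths (four leaves and the long path) meeting at its center; since for large $n$ it is the \emph{unique} tree with $\Gamma$ below $\Gamma_{S_5+P_\infty}$, merging two of its pendant paths strictly \emph{increases} $\Gamma$, contradicting your claim that this move always strictly decreases it. This is symptomatic: the Kelmans/Li--Feng circle of ideas gives monotonicity for $\lambda$, but $\Gamma$ is not monotone under such surgeries, and the paper emphasizes precisely this point (``one can modify the tail slightly, with very little change in $\Gamma_G$''), so there is no uniform sign-definite perturbation inequality of the kind your reduction to a broom requires. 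The paper's actual mechanism is entirely different: it fixes the master vertex $o$ (largest Perron entry), forms a small ``kernel'' $H\supseteq N_G[o]$, and uses the resolvent matrix $B=(\lambda I-A_H)^{-1}$ together with the heaviness of the kernel ($x_w\le x_o$ outside) to produce a lower bound on $\Gamma_G$ valid simultaneously for \emph{every} graph extending a given kernel (Theorems \ref{thm:intro_main_bound} and \ref{thm:Ga_xt_bound_spec}); a computer check over all $155$ possible $6$-vertex kernels then rules out everything except $K_4+_oP_2$. Nothing in your sketch supplies a tool of this kind, and without it the ``bounded head plus one pendant path'' reduction has no proof.

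Step 2 has a related problem: comparing the limiting ratios $\Gamma_{H+_vP_\infty}$ over finitely many heads, even granting eventual monotonicity in the path length, does not by itself control all $n$-vertex competitors, because you have not shown that a minimizer is a broom in the first place, nor that the tail cannot branch far from the head. In the paper this last point is again a delicate quantitative theorem (Theorem \ref{thm:Gamma-upper}, applied via Theorems \ref{K4_tail} and \ref{S5_tail}): if the pendant path branches at its $k$-th vertex then $\Gamma_G$ exceeds $\Gamma_{K_4+P_\infty}$ (resp.\ $\Gamma_{S_5+P_\infty}$), and the margins are tiny, which is why the proof proceeds by showing every graph other than $K_4+P_{n-4}$ satisfies $\Gamma_G\ge\Gamma_{K_4+P_\infty}$ while $\Gamma_{K_4+P_k}<\Gamma_{K_4+P_\infty}$ for all $k$ (Lemma \ref{lem:Gamma-lower}, Theorem \ref{thm:indeed_below}). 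So while your outline correctly identifies the finite-versus-infinite-path comparison and the razor-thin final margins, the core of the argument --- a uniform lower-bound method replacing the (unavailable) monotone local surgeries --- is missing, and the proposal as written does not close the conjecture.
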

We may define $\Gamma_G$ to be $\| \x \|_1^2 \big/ \| \x \|_2^2$ even for infinite graphs provided that the adjacency operator has an $\ell^1$ eigenvector $\x$ corresponding to the top of the spectrum. This can be done for $G = K_4 + P_{\infty}$ and it turns out that
\[ \Gamma_{K_4 + P_\infty} 
= \frac{5+3\sqrt{3}}{2} \approx 5.098076 
\quad \text{and} \quad 
\Gamma_{K_4 + P_k}  \nearrow \Gamma_{K_4 + P_\infty} 
\text{ as } k \to \infty .\]

In this paper we settle Conjecture \ref{conj:minimal_Ga} by proving the following.
\begin{Thm} \label{thm:only_K4Pk}
Let $n\geq 7$ and $G$ be an $n$-vertex connected graph. Then 
\[ \Gamma_G<\Gamma_{K_4 + P_\infty} = \frac{5+3\sqrt{3}}{2} 
\quad \Longleftrightarrow \quad 
G \cong \, \KKKKPP \cdots \PPP \,= K_4 + P_{n-4} . \]
In particular, among $n$-vertex connected graphs, $\Gamma_G$ is minimized by $G = K_4 + P_{n-4}$. (This latter statement holds for $n\geq 6$ already.)
\end{Thm}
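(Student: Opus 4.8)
The ``$\Leftarrow$'' direction is immediate: since $\Gamma_{K_4+P_k}\nearrow\Gamma_{K_4+P_\infty}$ as $k\to\infty$, every finite $k=n-4$ gives $\Gamma_{K_4+P_{n-4}}<\Gamma_{K_4+P_\infty}=\tfrac{5+3\sqrt3}{2}$. So the real content is the contrapositive of ``$\Rightarrow$'': writing $\mu:=\tfrac{1+3\sqrt3}{2}$ for the spectral radius and $\mu':=\tfrac{5+3\sqrt3}{2}$ for the target ratio, one must show that every connected $n$-vertex graph $G$ with $n\geq7$ and $G\not\cong K_4+P_{n-4}$ satisfies $\Gamma_G\geq\mu'$; the ``in particular'' clause for $n\geq6$ then follows from a direct check at $n=6$. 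I would argue by induction on $n$, with $n=7$ — and a handful of further small values forced by the case analysis below — verified directly.

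The analytic backbone is the behaviour of a pendant path. For a rooted graph $(H,o)$ containing at least one edge, $H+_o P_\infty$ has a genuine $\ell^1$ Perron vector with spectral radius $\lambda>2$; normalising $x_o=1$, the entries along the path form the geometric sequence $1,t,t^2,\dots$ with $t=\bigl(\lambda-\sqrt{\lambda^2-4}\bigr)/2$, which yields the closed formula
\[
\Gamma_{H+_o P_\infty}=\frac{\bigl(s+\tfrac{t}{1-t}\bigr)^2}{q+\tfrac{t^2}{1-t^2}},
\]
where $s$ and $q$ are the sum and the sum of squares of the normalised Perron weights on $H$; specialising to $H=K_4$ recovers $\mu$ and $\mu'$. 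Two monotonicity statements convert the infinite family into a finite problem: \emph{(i)} for a fixed rooted $(H,o)$ with an edge, $\Gamma_{H+_o P_k}$ is strictly increasing in $k$ with limit $\Gamma_{H+_o P_\infty}$, so shortening a pendant path strictly decreases $\Gamma$; and \emph{(ii)} suitable local switching / Kelmans-type moves on the ``core'' of $G$ are monotone for $\Gamma_{\,\cdot\,+_o P_\infty}$, which lets one trim an arbitrary core down to one of a short list of small ``kernels'' (the diamond, $K_4$, $K_4$ with a pendant edge or vertex, $K_3$, a few theta-type graphs, and the like — essentially the graphs appearing in the figures).

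With these tools the proof proceeds by cases on the pendant structure of $G$. If $G$ has a leaf whose neighbour has degree $2$ — equivalently, a pendant path of length $\geq2$ — shorten that path by one vertex to obtain $G'$ on $n-1$ vertices; by \emph{(i)}, $\Gamma_G>\Gamma_{G'}$, and then either $n-1\geq7$ and $G'\not\cong K_4+P_{n-5}$, so induction gives $\Gamma_{G'}\geq\mu'$ and we are done; or $G'\cong K_4+P_{n-5}$, forcing $G\cong K_4+P_{n-4}$ (excluded); or $n-1$ is below the inductive range, a finite base case. The remaining graphs have only pendant edges hanging from higher-degree vertices, or no leaves at all ($\delta(G)\geq2$); for these one combines \emph{(ii)} with \emph{(i)} to reduce to $G=B+_o P_m$ with $B$ a kernel from the short list (again up to finitely many small exceptions, and with the $\delta(G)\geq2$ graphs needing the separate observation that a graph on $\geq7$ vertices with minimum degree $\geq2$ already has $\Gamma_G>\mu'$). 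It then only remains to verify $\Gamma_{B+_o P_\infty}\geq\mu'$ for every rooted kernel $(B,o)$ distinct from $(K_4,\cdot)$, with equality only for $K_4$ — a finite explicit computation, since each $\Gamma_{B+_o P_\infty}$ is a concrete algebraic number produced by the displayed formula.

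I expect the genuine difficulty to lie in this structural reduction — proving that any $G$ with $\Gamma_G<\mu'$ must be a ``kernel plus pendant path'' with the kernel on the short list. Two features make it delicate. First, the switching moves in \emph{(ii)} have to be shown monotone \emph{for the ratio} $\Gamma$, not merely for the spectral radius (the classical setting); since $\Gamma$ is a quotient one must control $\|\x\|_1$, $\|\x\|_2$ and the motion of $\lambda$ simultaneously, typically by testing the perturbed graph's Perron quotient against the old Perron vector and tracking signs. Second, the theorem is a \emph{uniqueness} statement, so the kernel comparisons and the small-$n$ checks must be carried out with exact algebraic values, with special care for configurations whose ratio comes close to $\mu'$ (the diamond, $K_4$ carrying a short extra appendage, and so on). I would expect the bulk of the paper to be this case analysis and the accompanying finite computations.
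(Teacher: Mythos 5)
There is a genuine gap: both load-bearing claims of your plan are asserted, not proved, and each is at least as hard as the theorem's real difficulty. Your claim (i) — that $\Gamma_{H+_oP_k}$ is strictly increasing in $k$ for \emph{every} rooted graph $(H,o)$, so that deleting the end vertex of any pendant path strictly decreases $\Gamma$ — is nowhere justified, and it is stronger than anything the paper establishes: even the weaker comparison $\Gamma_{H+_vP_k}<\Gamma_{H+_vP_\infty}$ is proved here (Lemma~\ref{lem:Gamma-lower}) only under two analytic conditions on the resolvent data of $H$, verified by computer for the two specific graphs $H=K_4$ and $H=S_5$ (Theorem~\ref{thm:indeed_below}), via a delicate interpolation/perturbation argument. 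Since $\Gamma$ is notoriously non-monotone under graph operations, a universal monotonicity-in-$k$ statement would itself need a careful proof; without it, both your ``$\Leftarrow$'' direction and the inductive step collapse. Your claim (ii) is an even larger gap: the assertion that Kelmans-type switchings are monotone for $\Gamma_{\cdot+_oP_\infty}$ and reduce an arbitrary core to a short list of kernels is precisely the structural heart of the theorem, and no such monotone moves are known or sketched by you. The paper's substitute is a quantitative bounding method (the resolvent matrix $B=(\la I-A_H)^{-1}$, heavy subgraphs, Theorem~\ref{thm:Ga_xt_bound_spec}) applied to all $155$ possible $6$-kernels with computer verification (Theorem~\ref{thm:525}), followed by a separate near-threshold analysis showing that a branching tail pushes $\Gamma_G$ above $\beta_{\star}$ (Theorems~\ref{thm:Gamma-upper} and~\ref{K4_tail}); none of this is recovered by your outline.

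Two further points in your case analysis are not ``separate observations'' but near-threshold facts. The claim that every graph on at least $7$ vertices with minimum degree at least $2$ has $\Gamma_G>\beta_{\star}$ cannot be dispatched easily: $K_4$ with a long pendant path ending in a triangle has minimum degree $2$ and $\Gamma$ only slightly above $\beta_{\star}$, so this case needs the full branching-tail estimates, not a preliminary remark. Likewise, graphs whose only leaves hang from vertices of degree at least $3$ admit no path-shortening step, so your induction never reduces them; they fall entirely on the unproved claim (ii). In short, the skeleton (kernel identification plus ``tail must be a path'' plus the finite comparison of limiting ratios) matches the paper's strategy at a high level, but the two mechanisms you propose to drive it — universal monotonicity of $\Gamma$ under path shortening and $\Gamma$-monotone switching moves — are exactly what is missing, and the paper's own proof exists because neither is available.
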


We also settle another conjecture of R\"ucker, R\"ucker and Gutman \cite{rucker2002kites}
that, for any given $n\geq 8$, among $n$-vertex trees the graph $S_5+P_{n-5}$ minimizes $\Gamma_G$, where $S_5 = \SSSSSsm$ is the star on $5$ vertices. Again, one can compute the limiting ratio:
\[ \Gamma_{S_5 + P_{\infty}} 
= 4+2\sqrt{3} \approx 7.4641 
\quad \text{and we have} \quad 
\Gamma_{S_5 + P_k}  \nearrow \Gamma_{S_5 + P_\infty} 
\text{ as } k \to \infty .\]
As before, we prove the conjecture by showing that, for large enough $n$, there is only one graph below the limiting ratio.
\begin{Thm} \label{thm:only_S5Pk}
Let $n\geq 14$ and $T$ be a tree on $n$ vertices. Then
\[ \Gamma_T<\Gamma_{S_5+P_{\infty}}=4+2\sqrt{3} 
\quad \Longleftrightarrow \quad 
T \cong \, \SSSSSPP \cdots \PP = S_5+P_{n-5} . \]
In particular, among $n$-vertex trees, $\Gamma_T$ is minimized by $T= S_5 + P_{n-5}$. (This latter statement holds for $n\geq 8$ already.)
\end{Thm}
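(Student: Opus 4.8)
The plan is to mirror the strategy for Theorem \ref{thm:only_K4Pk} (graphs) but now restricted to the class of trees, where the combinatorics is somewhat more rigid. Write $\lambda(G)$ for the spectral radius of the adjacency matrix, and recall the basic eigenvalue identity: if $\x$ is the Perron vector and $v$ has neighbors $w_1,\dots,w_d$, then $\lambda x_v=\sum_j x_{w_j}$. Along a pendant path $v_0 v_1 v_2 \cdots$ (with $v_0$ the attachment point), the Perron weights satisfy the linear recursion $\lambda x_{v_{i+1}} = x_{v_i} + x_{v_{i+2}}$, whose solution, once $\lambda=2\cosh\theta>2$ is fixed, decays like $e^{-\theta i}$ as one moves toward the leaf. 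Consequently both $\sum x_v$ and $\sum x_v^2$ over a long pendant path converge geometrically, and $\Gamma_T$ for $T = H +_v P_k$ converges monotonically to $\Gamma_{H+_v P_\infty}$ as $k\to\infty$; this is exactly the mechanism behind the stated limits $\Gamma_{S_5+P_\infty} = 4+2\sqrt3$ and $\Gamma_{S_5+P_k}\nearrow 4+2\sqrt3$. The first order of business is to make this quantitative: establish a ``locality'' estimate showing that $\Gamma_T$ depends, up to an exponentially small error in the distance, only on a bounded-radius neighborhood of the ``core'' of $T$ (the part left after stripping all sufficiently long pendant paths).

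Next I would classify the relevant cores. The heart of the argument is a finite case analysis: one shows that if a tree $T$ has $\Gamma_T < 4+2\sqrt3$, then its core must be one of a short explicit list of small trees, and that once the core is fixed, attaching pendant paths can only push $\Gamma_T$ toward the corresponding limit $\Gamma_{\text{core}+P_\infty}$ from below. The key monotonicity/comparison tools are: (i) lengthening a pendant path strictly increases $\Gamma$ (toward its limit) — a ``path-extension lemma'' — and (ii) various local surgery moves (sliding a pendant edge, replacing a bushy vertex by a star, Kelmans-type transformations restricted to trees) that strictly decrease or increase $\Gamma$ in a controlled direction. Using these one proves that any tree whose core is \emph{not} $S_5$, $S_6$, the ``broom''/``spider'' configurations adjacent to it, or a few sporadic small trees, already has $\Gamma_T \geq 4+2\sqrt3$; and that among the finitely many survivors, only $S_5+P_{n-5}$ stays strictly below the threshold for all large $n$, every other candidate family $H+_v P_\infty$ having limiting ratio $\geq 4+2\sqrt3$. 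The hypothesis $n\geq 14$ is what makes ``the pendant paths are long enough to apply the locality estimate'' legitimate while still allowing the small-$n$ cases to be checked by hand (or machine) for the minimization statement down to $n\geq 8$.

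The forward direction ($T \cong S_5+P_{n-5} \Rightarrow \Gamma_T < 4+2\sqrt3$) is the easy half: it follows immediately from $\Gamma_{S_5+P_k}\nearrow 4+2\sqrt3$, which is the path-extension lemma applied to the core $S_5$. The reverse direction is where the work lies. I expect the main obstacle to be the \textbf{case analysis of the cores}: one needs to rule out, cleanly and without an explosion of subcases, every tree whose core is ``close to'' $S_5$ in the relevant partial order — in particular trees with a single branch vertex of degree $4$ or $5$ with unequal pendant-path lengths, trees with two branch vertices, and the star $S_6$ with a path (whose limiting ratio one must show is $> 4+2\sqrt3$, not $<$). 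Getting sharp enough local estimates that the finitely many borderline configurations are decided correctly — rather than merely ``close'' — is the delicate point, and it is presumably handled by computing the limiting characteristic equations for each candidate $H+_v P_\infty$ exactly (they are algebraic over $\mathbb{Q}(\sqrt3)$) and comparing. Once the core list collapses to $\{S_5\}$ plus finitely many explicitly-below-$n=14$ exceptions, the theorem — including the sharpened range $n\geq 8$ for the minimization claim — drops out by direct verification.
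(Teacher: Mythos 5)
Your outline captures the general shape (identify a small local structure, show everything else exceeds the threshold, show $S_5+P_{n-5}$ stays below it), but as a proof it has a genuine gap: you never supply a mechanism that can actually carry out the exclusion step, and the tools you do invoke are either unproven or insufficient. The central obstruction, which the paper stresses, is that $\Gamma$ is badly non-monotone under subgraph/surgery operations; your items (i)--(ii) (``lengthening a pendant path strictly increases $\Gamma$ toward its limit'', ``attaching pendant paths can only push $\Gamma_T$ toward the corresponding limit from below'', Kelmans-type or edge-sliding moves that move $\Gamma$ ``in a controlled direction'') are exactly the kind of monotonicity statements that are not available off the shelf; even the inequality $\Gamma_{H+_vP_k}<\Gamma_{H+_vP_\infty}$ is established in the paper only under nontrivial verified conditions on rational functions attached to $H$ (Lemma~\ref{lem:Gamma-lower}, checked for $H=S_5$ in Theorem~\ref{thm:indeed_below}), not as a general path-extension lemma. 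What actually replaces your ``finite case analysis of cores'' in the paper is a concrete bounding engine: the master vertex $o$ (largest Perron entry), the $10$-vertex kernel around $o$, and the resolvent-matrix quadratic-form bound (Theorem~\ref{thm:Ga_xt_bound_spec}) which turns ``every tree extending this kernel has $\Gamma\ge 4+2\sqrt3$'' into finitely many polynomial positivity checks over the $194$ candidate kernels. Without some such quantitative device, ``one shows the core must be on a short list'' is an assertion, not an argument.

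The second, sharper gap is your proposed way of deciding borderline configurations by ``computing the limiting characteristic equations for each candidate $H+_vP_\infty$ exactly and comparing.'' This cannot work for the decisive family: trees that agree with $S_5+P_\ell$ out to distance $\ell$ and only then branch. For these, the branch's influence is exponentially small, so their limiting ratio as $\ell\to\infty$ is again $4+2\sqrt3$ itself; comparing limits decides nothing, and what is needed is the \emph{sign} of the finite-$\ell$ correction, i.e.\ a proof that any branching of the tail pushes $\Gamma$ strictly above $4+2\sqrt3$ while the pure path stays strictly below. This is precisely the delicate two-sided analysis the paper devotes Sections~\ref{sec:pendant_paths} and \ref{sec:tail} to (Lemma~\ref{lem:Gamma-lower} on one side, Theorem~\ref{thm:Gamma-upper}/Theorem~\ref{S5_tail} on the other, plus finite computer checks for branch distances $4\le\ell\le 7$), and it also accounts for the exceptional trees $S_6+P_k$, $k=5,6,7$, which your sketch would have to detect but has no means to. So the forward implication in your proposal is fine in spirit, but the reverse implication is missing its two key ingredients: a workable uniform lower-bound method around the master vertex, and a margin-sensitive treatment of far-away branchings that a limit comparison cannot see.
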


\subsection{Background} \label{sec:background}

When we want to define a quantity for measuring how unbalanced or how centralized a network is, we basically need to make two choices.
\begin{enumerate}
\item How to rank the vertices? That is, how to assign weights to the vertices expressing their importance?
\item Once a ranking (i.e., a vector of nonnegative weights) is given, we need a way to measure how concentrated this vector is. 
\end{enumerate}

\subsubsection*{The ranking.} 
When one tries to identify influential nodes in a network, a natural idea is to consider nodes with large degrees. However, having many connections is not quite the same as being important or influential. The ranking system should reflect the fact that connections to more influential nodes matter more. For instance, we may demand the weight of a vertex to be proportional to the sum of the weights of its neighbors. In a connected graph the unique\footnote{up to scaling/normalization} vector (consisting of nonnegative weights) with this property is the principal eigenvector $\x$ of the adjacency matrix. Also note that, unless $G$ is bipartite, we have $A^k / \la^k \to \x \x^\top$ as $k \to \infty$. This actually shows that repeated propagation of importance along edges converges exactly to the principal eigenvector ranking. 

The above ranking is often called \emph{eigenvector centrality}, and is broadly used in various fields. It is a powerful tool for understanding network structure and dynamics, and has countless applications. Below, we list a few typical areas of usage, along with selected references.
\begin{itemize}
\item \textbf{Sociology:} for analyzing social networks (e.g., identifying key influencers in online social platforms, studying leadership or prestige in organizational networks, modeling information diffusion). 
\cite{bonacich1972factoring,bonacich2007some,maharani2014degree}
\item \textbf{Web science:} for ranking web pages. For instance, Google's PageRank system is a directed variant of eigenvector centrality. 
\cite{brin1998anatomy,langville2006google} 
\item \textbf{Biology and bioinformatics:} for identifying essential or influential biological entities in various networks such as protein--protein interaction (PPI) networks or gene regulatory networks. 
\cite{ozgur2008identifying,li2012new,ashtiani2018systematic}
\item \textbf{Epidemiology:} for identifying ``super-spreaders'' or high-impact individuals (or locations) in disease transmission networks as well as for working out vaccination strategies. For instance, the epidemic threshold in SIS models is closely related to the largest eigenvalue, while the entries of the principal eigenvector correspond to steady-state infection probabilities. 
\cite{van2012epidemic,chaharborj2022controlling}
\item \textbf{Transportation:} for urban (transportation) planning or road/rail network vulnerability analysis. 
\cite{cheng2015measuring,agryzkov2019centrality}
\item \textbf{Economics:} for modeling interdependencies among institutions, markets, firms, countries, and sectors. 
\cite{engel2021network,fagiolo2023centrality}
\item \textbf{Neuroscience:} for identifying key brain regions and studying neurological disorders. 
\cite{lohmann2010eigenvector,zuo2012network}
\item \textbf{Physics, complex systems, network science:} for describing various dynamical processes such as \emph{diffusion processes} or \emph{synchronization} (Kuramoto-like models), for understanding phase transitions (percolation and structural transitions can depend on eigenvector localization in a number of models), and for analyzing network robustness and resilience. \cite{newman2010networks,martin2014localization,pradhan2017optimized,young2024dynamical}
\end{itemize}

\subsubsection*{Measuring centralization} 
Now that we have a ranking (i.e., weights assigned to the vertices expressing their relevance), how can we use it to get to know our network? Is our network balanced or centralized? Is there a good way to measure this?

Given a vector $\y = (y_i)$, a widely used proxy for sparsity is the following quantity:
\begin{equation} \label{eq:Ga_y}
\Ga(\y) := \frac{\| \y \|_1^2}{\| \y \|_2^2} 
= \frac{\big( \sum |y_i| \big)^2}{\sum y_i^2}
\end{equation}
It can be viewed as a continuous relaxation of the size of the support. Indeed, if all nonzero entries of $\y$ are equal, then $\Ga(\y)$ is simply the support size, that is, the number of nonzero entries. In general, it measures the ``effective support size'': the quantity of entries that ``meaningfully contribute'' to $\y$. Accordingly, the effective order $\Ga_G = \Ga(\x)$ can be viewed as the number of vertices that meaningfully contribute to $G$. 

There are several slightly different but equivalent variants of this quantity used in the literature. The so-called \emph{coefficient of variation} is a standard notion in various fields from economic
inequality \cite{atkinson1970measurement} to measurement reliability in clinical epidemiology \cite{shechtman2013coefficient} and machine learning \cite{bindu2019coefficient}.

The \emph{Hoyer sparsity measure}, introduced in \cite{hoyer2004non}, is a shifted and scaled version of $\|y\|_1 \big/ \|y\|_2$, and hence also equivalent to the above quantities. Yet another related notion is the \emph{signal-to-noise ratio (SNR)}, which plays an important role in information theory \cite{shannon1949communication} and image processing \cite{johari4developing}. In certain situations, it can be defined as the reciprocal of the coefficient of variation. 

The use of $\Ga(\y)$ or equivalent quantities is very natural but one may measure the sparsity of a vector $\y$ in other ways; see \cite{hurley2009comparing} for an overview of different sparsity measures. Here we mention one further natural candidate: the \emph{Shannon entropy} $\sum_i -p_i \log(p_i)$ of the probability distribution corresponding to the normalized entries $p_i := |y_i| / \|y\|_1$. If $\y$ is constant on its support $S$, then the entropy is $\log(|S|)$. Therefore, we may view it as a continuous relaxation of the logarithm of the support size. As far as we know, this quantity has not been investigated in the context of eigenvector centrality. It looks plausible that it is minimized by the same connected graph $K_4 + P_{n-4}$.

\subsubsection*{Measures of irregularity}
We now turn to results regarding the principal eigenvector $\x$ of the adjacency matrix of a graph. 

Clark used the coefficient of variation to measure the irregularity of $\x$. As he points out in \cite[Lemma~2.1]{clark2024comparing}, the square of this coefficient (denoted by $c_e$) can be expressed in terms of the $\ell^1$ and $\ell^2$ norms of $\x$ as follows:
\[ c_e = \big| V(G) \big| \frac{\| \x \|_2^2}{\| \x \|_1^2} - 1 
\text{, which is equal to }
\frac{\big| V(G) \big|}{\Ga_G} - 1 
\text{ with our notations.}\]
Therefore, maximizing $c_e$ corresponds to minimizing $\Ga_G$. Consequently, \cite[Conjecture 4.1]{clark2024comparing} is equivalent to Conjecture~\ref{conj:minimal_Ga}. Actually, Conjecture~\ref{conj:minimal_Ga} was originally proposed in \cite{rucker2002kites} in the context of molecular graphs, with the aim of introducing graph invariants that quantify certain structural properties of molecules. The conjecture was popularized in \cite[Conjecture 24]{lele2023unsolved} along with other unsolved problems in spectral graph theory. Note, however, that in those papers the principal eigenvector $\x$ is normalized to satisfy $\| \x \|_2=1$, and hence $\Ga_G$ is simply $\| \x \|_1^2$, and that is the reason why the formulation of the conjecture seems different at first glance.

Several other measures of irregularity have been studied in connection to the largest eigenvalue and the principal eigenvector. Most notably, the \emph{principal ratio} $\ga(G)$, defined as the ratio of the largest and smallest entries of $\x$, has been thoroughly examined. Note that $\ga(G)=1$ if and only if $G$ is regular. Cioab\u{a} and Gregory conjectured \cite{cioaba2007irregular} that the most irregular graph (i.e., the one having the largest principal ratio) among connected $n$-vertex graphs is $K_m+P_{n-m}$ for some $m$. This conjecture was confirmed by Tait and Tobin for sufficiently large $n$ \cite{tait2018maxpr}. In \cite{zhang2024stability} Zhang studied the stability of the principal ratio around regular graphs. In \cite{clark2024comparing} Clark argues that the principal ratio $\ga(G)$ is more sensitive/less robust than $c_e$ (and hence $\Ga_G$). He also proves an upper bound for $c_e$ in terms of $\ga(G)$ \cite[Lemma 2.2]{clark2024comparing}, which may be expressed as the following lower bound on $\Ga_G$:
\[ \Ga_G \geq \frac{|V(G)|}{\ga^2(G)} .\]

Another measure of irregularity is the difference of $\la_G$ and the average degree of $G$, proposed by Bell \cite{bell1992note}. Here the extremal graph is different: it is conjectured to be a complete graph $K_m$ with $n-m$ leaves attached to one of the vertices \cite{aouchiche2008variable}.

Many irregularity measures based on degrees have been considered as well. For instance, Albertson proposed \cite{albertson1997irregularity} the quantity 
\[ \sum_{uv \in E(G)} | \deg(u) - \deg(v) | .\]
The corresponding extremal graphs were characterized in \cite{hansen2005variable}. Nikiforov proved several inequalities between degree-based irregularity measures in \cite{nikiforov2006eigenvalues}.

\subsection{Our approach}

The main difficulty in bounding $\Gamma_G$ and proving these conjectures is that $\Gamma_G$ is far from being a monotone graph parameter: we may have $\Gamma_H \gg \Gamma_G$ for a subgraph $H$ of $G$. For instance, $\Gamma_{K_4+P_k}$ is bounded, while $\Gamma_{P_k}$ grows linearly in $k$.

The situation, however, becomes manageable if we consider ``heavy'' subgraphs $H$ that have few vertices but a fairly large weight with respect to the Perron vector $\x$ of $G$. This way we will be able to give lower bounds on $\Ga_G$ in terms of $H$ and the top eigenvalue of $G$.

More specifically, we start from the vertex $o$ with the largest $x_o$ and explore $G$ around this master vertex by a breadth-first search from $o$. We refer to this small part of the graph around $o$ as the \emph{kernel}. In order to get sufficiently strong bounds on $\Ga_G$, we will need to consider kernels containing $6$ vertices (or $10$ vertices in the case of trees). In both cases, there are over a hundred possibilities for the kernel. First we aim to show that only one of these possibilities is viable for the minimizing graphs/trees. Given any other kernel, we need to show that $\Ga_G$ exceeds the limiting ratio for every extension $G$ of that kernel. 

Some of the kernels could be handled using basic estimates but one quickly realizes that there are many kernels for which very precise bounds will be necessary and some amount of computer assistance seems inevitable in these problems. Our approach was to develop a bounding method that can treat all the kernels on a common theoretical basis, and use a computer to verify that our estimates are sufficiently accurate to prove the conjecture. It has been a challenge to find a method that works for (essentially) all extensions of a kernel and gives strong enough bounds. We will outline this method in Section \ref{sec:key_method}. 

Once we know what the kernel must be, we need to show that what lies outside the kernel (the ``tail'') must be a single path. The difficulty here is that one can modify the tail slightly, with very little change in $\Ga_G$. Delicate estimates are required to prove that any modification will cause $\Ga_G$ to exceed the limiting ratio $\Gamma_{K_4 + P_\infty}$ (or $\Gamma_{S_5 + P_\infty}$ in the case of trees).

\subsection{Notations.} \label{sec:notations}
By $K_r$, $P_r$, and $S_r$ we denote the complete graph, the path, and the star on $r$ vertices, respectively.\footnote{Note that the central vertex of $S_r$ has degree $r-1$.} For a complete graph with a pendant path we write $K_r + P_k$. Similarly, $S_r + P_k$ is a star graph with a pendant path. (Note that both have $r+k$ vertices and maximum degree $r$.)

Throughout the paper $\la_G$ denotes the largest eigenvalue of the adjacency matrix $A_G$ of a connected graph $G$, and let $\x_G=\big( x_v \big)_{v \in V(G)}$ be the corresponding eigenvector (often called the \emph{principal eigenvector} or simply the \emph{Perron vector}). The entries $x_v$ of $\x_G$ are known to have the same sign, so we will always assume that they are all positive. We will often omit the subscript $G$ and simply write $\la$ and $\x$. For a subgraph $H$ of $G$, let $\x\big|_H$ be the restriction of $\x$ to the vertices of $H$. (Note that $\x\big|_H$ and $\x_H$ are two different vectors.) We will often refer to $x_v$ as the \emph{weight} of $v$, and define the \emph{master (vertex)} of $G$ as the vertex with the largest weight.

The degree of a vertex $v$ is denoted by $\deg(v)$, while $\dist(u,v)$ is the distance between $u$ and $v$. As usual, $N_G(v)$ and $N_G[v]$ denote the \emph{open and closed neighborhood} of a vertex, respectively.\footnote{We use the notation $N_G(v)$ both for the set of neighbors and for the induced subgraph on that set. The context always makes it clear which of the two is meant. The same applies to $N_G[v]$ and $N_G[H]$.} Also, for a subgraph $H$, $N_G[H]$ is the closed neighborhood of $H$, that is, the induced subgraph containing the vertices of $H$ and their neighbors.

Furthermore, recall \eqref{eq:Ga_y}, where we defined $\Ga(\y)$ as $\big( \sum |y_i| \big)^2 \big/ \big( \sum y_i^2 \big)$ for a finite (or countably infinite) vector $\y=(y_i)$. With this notation, we clearly have $\Ga_G = \Ga\big( \x_G \big)$. 

Finally, we will use the following shorthand notations for the limiting ratios:
\[ \bestar:= \Gamma_{K_4 + P_\infty} = \frac{5+3\sqrt{3}}{2} 
\quad \text{and} \quad
\betr:= \Gamma_{S_5 + P_\infty} = 4+2\sqrt{3}.\]

\subsection{Our key bounding method} \label{sec:key_method}
Suppose that $H$ is a connected induced subgraph of $G$. We will think of $H$ as the known part of $G$ and treat the rest as ``unknown variables''. In particular, we will always treat the top eigenvalue $\la=\la_G$ of $G$ as a variable. We will refer to vertices of $H$ as \emph{inside vertices}. For an \emph{outside vertex} $w \in V(G) \sm V(H)$, we use the shorthand notation 
\[ \partial w := N_G(w) \cap V(H) \]
for the set of inside neighbors of $w$. Furthermore, let $\Ht := N_G[H]$ denote the closed neighborhood of $H$. For the sake of simplicity, let $\xt := \x\big|_{\Ht}$ denote the restriction of the Perron vector $\x$ to $V(\Ht)$. Our goal is to bound $\Ga(\xt)$. (Later Lemma \ref{lem:intro_2la+3} will show how this can be turned into a bound on $\Ga_G$.) We start by introducing the following \emph{(resolvent) matrix}:
\begin{equation} \label{eq:B_intro}
B :=(\la I - A_H)^{-1} .    
\end{equation}
This is a symmetric matrix of size $|V(H)| \times |V(H)|$ whose entries are (rational) functions of $\la$. So if we think of $H$ as a fixed (small) graph, then for any $u,v \in V(H)$, $B_{u,v}$ is a fixed rational function\footnote{A rational function is the ratio of two polynomials.}. Moreover, since $G$ contains $H$, we have $\la=\la_G>\la_H$, and it can be shown that each $B_{u,v}$ is positive in that regime. The resolvent matrix $B$ has further nice properties, see Proposition \ref{prop:B_properties}.

For our purposes the key fact is that $B$ can be used to express the weights $x_v$, $v \in V(H)$ inside $H$ in terms of the weights outside $H$. Indeed, by writing up the eigenvalue equations for each inside vertex, we get a linear system for $x_u$, $u \in V(H)$. Solving this system gives  
\[ x_u 
= \sum_{w \in V(\Ht) \sm V(H)} x_w \sum_{v \in \partial w} B_{u,v} \quad \text{for every } u \in V(H) .\]
This formula motivates the introduction of the following extension of the resolvent matrix: for $u \in V(H)$ and $\emptyset \neq V \subseteq V(H)$ let
\begin{equation} \label{eq:Bt_intro}
\Bt_{u,V} := \sum_{v \in V} B_{u,v} .
\end{equation}
With this notation we have $x_u = \sum_w \Bt_{u,\partial w} \, x_w$. Setting $k=|V(H)|$, one may think of $\Bt$ as a $k \times (2^k-1)$ matrix. Its columns consist of all partial sums of the columns of $B$. In particular, the column corresponding to $V=\{v\}$ is a column of $B$, so $\Bt$ extends $B$ in this sense. 

It follows that both $\| \xt \|_1^2$ and $\| \xt \|_2^2$ can be written as quadratic forms of the variables $x_w$, and hence 
\[ \Ga(\xt) = \frac{\| \xt \|_1^2}{\| \xt \|_2^2}  
= \frac{\sum_{w,w'} a_{w,w'} x_w x_{w'}}{\sum_{w,w'} b_{w,w'} x_w x_{w'}} \]
for some coefficients $a_{w,w'}$, $b_{w,w'}$ (that are rational functions of $\la$). Recall that the weights $x_w$ are all positive. Moreover, the coefficients of the quadratic forms are also positive provided that $\la>\la_H$. Therefore
\[ \Ga(\xt) \geq \min_{w,w'} \frac{a_{w,w'}}{b_{w,w'}} .\]

It turns out, however, that this approach gives good bounds only near $\la_H$. This limitation can be overcome by imposing additional requirements on $G$ (other than it contains $H$ as an induced subgraph). This could actually be done in various ways but a basic assumption, which will be sufficient for our purposes, is the following. Suppose that, for some fixed vertex $o$ of $H$, we have $x_w \leq x_o$ for every outside vertex $w$. Under this assumption, the denominator increases if we replace each $x_w^2$ with $x_w x_o$. This gives a different quadratic form in the denominator which leads to good lower bounds for any $\la > \la_H$. The drawback is that it only applies to graphs $G \supset H$ that satisfy our additional assumption. After working out the details, we will obtain the following result.
\begin{Thm} \label{thm:intro_main_bound}
Let $H$ be a connected induced subgraph of $G$, and consider the corresponding resolvent matrix $B$ and its extended version $\Bt$ as defined in \eqref{eq:B_intro} and \eqref{eq:Bt_intro}. We denote the column sums of $\Bt$ by  
\[ s_{V} := \sum_{u \in V(H)} \Bt_{u,V} \quad 
\text{for a nonempty } V \subseteq V(H) ,\]
and the inner products of the column vectors of $\Bt$ by 
\[ c_{U,V} := \sum_{u \in V(H)} \Bt_{u,U} \Bt_{u,V} \quad 
\text{for nonempty } U,V \subseteq V(H) .\]
Furthermore, let $\Ht=N_G[H]$ and $\xt = \x\big|_{\Ht}$, where $\x$ is the Perron vector of $G$. If $\partial w = N_G(w) \cap V(H)$ denotes the set of inside neighbors of a vertex $w \in W := V(\Ht) \sm V(H)$, then we have 
\[ \| \xt \|_1^2 = \sum_{w,w' \in W} \big( s_{\partial w} + 1\big) \big( s_{\partial w'} + 1\big) x_w x_{w'} \]
Moreover, if the master vertex\footnote{Recall that in Section~\ref{sec:notations} the master vertex was defined as the vertex $o$ with the largest $x_o$.} $o$ of $G$ lies in $H$, then 
\[ \| \xt \|_2^2 \leq \sum_{w,w' \in W} \left( c_{\partial w,\partial w'} + \frac{1}{2} \Bt_{o,\partial w} + \frac{1}{2} \Bt_{o,\partial w'}\right) x_w x_{w'} .\]
Let $\Uc^G_H := \big\{ \partial w \, : \, w \in W \big\}$. It follows that 
\begin{equation} \label{eq:the_bound}
\Ga(\xt) = \frac{\| \xt \|_1^2}{\| \xt \|_2^2}  
\geq \min_{U,V \in \Uc^G_H} \; \frac{\big( s_U + 1\big) \big( s_V + 1\big)}{c_{U,V} + \frac{1}{2} \Bt_{o,U} + \frac{1}{2} \Bt_{o,V}} .
\end{equation}
Note that this bound depends on the eigenvalue $\la=\la_G$: indeed, the right-hand side is the minimum of various rational functions of $\la$. 
\end{Thm}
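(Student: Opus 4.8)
The strategy is to set up the eigenvalue equations on the inside vertices, solve them via the resolvent matrix $B$, and then expand the two norms $\|\xt\|_1^2$ and $\|\xt\|_2^2$ as quadratic forms in the outside weights $x_w$, $w \in W$. The term-by-term comparison of these quadratic forms then yields \eqref{eq:the_bound}.

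First I would establish the basic identity $x_u = \sum_{w \in W} \Bt_{u,\partial w}\, x_w$ for every $u \in V(H)$, which was already sketched in the text. Concretely: for an inside vertex $u$, the eigenvalue equation $\la x_u = \sum_{z \sim u} x_z$ splits the sum over neighbors of $u$ into inside neighbors (contributing $(A_H \x|_H)_u$) and outside neighbors (contributing $\sum_{w \in W,\, u \in \partial w} x_w$). Rearranging gives $(\la I - A_H)\x|_H = \b$ where $b_u = \sum_{w:\, u \in \partial w} x_w = \sum_w \big[u \in \partial w\big]\, x_w$; inverting with $B = (\la I - A_H)^{-1}$ (which is legitimate since $\la = \la_G > \la_H$, so $\la I - A_H$ is invertible — indeed positive-definite) and using the definition $\Bt_{u,V} = \sum_{v\in V} B_{u,v}$ of \eqref{eq:Bt_intro} yields $x_u = \sum_w \Bt_{u,\partial w}\, x_w$. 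This is the engine of the whole computation.

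Next, for $\|\xt\|_1^2$: since $V(\Ht)$ is the disjoint union of $V(H)$ and $W$, we have $\|\xt\|_1 = \sum_{u \in V(H)} x_u + \sum_{w \in W} x_w$. Substituting the identity for each $x_u$ and swapping the order of summation, $\sum_{u \in V(H)} x_u = \sum_{w \in W}\big(\sum_{u \in V(H)} \Bt_{u,\partial w}\big) x_w = \sum_{w \in W} s_{\partial w}\, x_w$, using the definition of the column sum $s_V$. Hence $\|\xt\|_1 = \sum_{w \in W}(s_{\partial w} + 1)\, x_w$, and squaring gives the stated formula for $\|\xt\|_1^2$ exactly (this part is an equality, no estimate needed). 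For $\|\xt\|_2^2 = \sum_{u\in V(H)} x_u^2 + \sum_{w\in W} x_w^2$: the inside part becomes $\sum_{u}\big(\sum_w \Bt_{u,\partial w} x_w\big)^2 = \sum_{w,w'}\big(\sum_u \Bt_{u,\partial w}\Bt_{u,\partial w'}\big) x_w x_{w'} = \sum_{w,w'} c_{\partial w,\partial w'}\, x_w x_{w'}$, using the definition of the inner product $c_{U,V}$. For the outside part $\sum_w x_w^2$, I invoke the additional assumption that the master vertex $o$ lies in $H$, so $x_w \le x_o$ for all $w \in W$; therefore $x_w^2 \le x_w x_o = x_w \sum_{w'} \Bt_{o,\partial w'} x_{w'}$ — wait, more carefully: $x_o = \sum_{w'} \Bt_{o,\partial w'} x_{w'}$ by our identity applied to $u = o$, so $x_w^2 \le x_w x_o = \sum_{w'} \Bt_{o,\partial w'}\, x_w x_{w'}$. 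Symmetrizing over $w,w'$ turns $\sum_w x_w^2$ into a bound of the form $\sum_{w,w'} \tfrac12\big(\Bt_{o,\partial w} + \Bt_{o,\partial w'}\big) x_w x_{w'}$. Adding the two pieces gives the claimed upper bound on $\|\xt\|_2^2$. (One should note here that this uses positivity of the $x_w$ and positivity of the matrix entries $\Bt_{o,V}$ in the relevant range $\la > \la_H$ — the latter from Proposition \ref{prop:B_properties} — so that the symmetrization inequality is valid term by term.)

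Finally, to pass from the two quadratic-form expressions to \eqref{eq:the_bound}: since all $x_w$ are positive and (in the range $\la>\la_H$) all coefficients $s_V+1$, $c_{U,V}$, $\Bt_{o,V}$ are positive, the ratio of a positive-coefficient quadratic form to another positive-coefficient quadratic form is bounded below by the smallest ratio of corresponding coefficients — the elementary mediant inequality $\big(\sum p_i\big)/\big(\sum q_i\big) \ge \min_i p_i/q_i$ for positive $p_i,q_i$, applied with $i$ ranging over pairs $(w,w')$. Grouping the pairs by which $(U,V) = (\partial w, \partial w') \in \Uc^G_H \times \Uc^G_H$ they realize, the bound becomes the minimum over $U,V \in \Uc^G_H$ of $(s_U+1)(s_V+1)\big/\big(c_{U,V} + \tfrac12\Bt_{o,U} + \tfrac12\Bt_{o,V}\big)$, which is \eqref{eq:the_bound}.

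**Main obstacle.** The routine bookkeeping (sum-swapping, symmetrization) is straightforward; the only genuinely delicate point is justifying \emph{positivity} of all the quantities involved in the regime $\la > \la_H$ — the invertibility and positivity of $B$, hence of the partial sums $\Bt_{u,V}$ and of $s_V$, $c_{U,V}$ — which is exactly what lets both the symmetrization step and the final mediant step go through coefficientwise. This is presumably the content of Proposition \ref{prop:B_properties}, so modulo citing that, the proof is essentially a careful reorganization of sums. A secondary subtlety worth stating explicitly is that $\Uc^G_H$ is a \emph{set} (distinct outside vertices can have the same neighborhood inside $H$), so when grouping pairs $(w,w')$ by their image $(\partial w, \partial w')$ one is really collecting several terms with identical coefficient ratios — harmless, but worth a sentence.
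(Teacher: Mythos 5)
Your proposal is correct and follows essentially the same route as the paper: solve the inside eigenvalue equations via the resolvent to get $x_u=\sum_{w\in W}\Bt_{u,\partial w}x_w$, expand $\|\xt\|_1^2$ exactly and bound $\|\xt\|_2^2$ by replacing $x_w^2\leq x_w x_o$ with $x_o$ expanded through $\Bt_{o,\cdot}$ and symmetrizing, then apply the mediant inequality coefficientwise using the positivity guaranteed by Proposition \ref{prop:B_properties}. This matches the paper's Lemma \ref{lem:xt_norms} (parts (a) and (b3) with $\gat=1$) and Theorem \ref{thm:Ga_xt_bound}(iii), so no further comparison is needed.
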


One can apply this theorem as follows. Given a specific $H$ with a vertex $o \in V(H)$ and some target ratio $\beta > 0$, we want to conclude that $\Ga(\xt) \geq \beta$. To achieve this, we need to check that the expression on the right-hand side of \eqref{eq:the_bound} is at least $\beta$ for any pair $\emptyset \neq U,V \subseteq V(H)$ and for any possible $\la$. Note, however, that a set $U$ is included in $\Uc^G_H$ only if $G$ has a vertex $w$ with $\partial w = U$, which implies that $\la_G \geq \la_U$, where $\la_U$ is the top eigenvalue of the graph $G[V(H) \cup \{w\} ]$. Therefore, for a pair $U,V$, we need to perform the check only for $\la \geq \max(\la_U, \la_V)$. In fact, as we will explain in Section \ref{sec:key_bounds} (see Remark \ref{rem:poly}), the computer will simply need to verify the positivity of the coefficients of some low-degree polynomials, which can be done efficiently. 

By default, we would need to do this for all nonempty subsets $U,V$ of $V(H)$. However, in our applications we will often have extra information regarding how outside vertices may be attached to $H$, narrowing down what $\partial w$ might be. For instance, $o$ is (almost) always assumed to have no outside neighbors, meaning $\partial w$ never contains $o$. In the case of trees the situation is even better: we always know that $|\partial w|=1$.

For a concrete example, see the Appendix, where we carried out the computations for the graph $H = \KKKPPP \, = K_3+P_3$.

\subsection{Outline of the proof of Theorem \ref{thm:only_K4Pk}} \label{sec:proof_outline}

The proofs of both conjectures are analogous. Here we provide a synopsis of the first one.

Let $o$ denote the master vertex of $G$ meaning that $x_o \geq x_v$ for each $v \in V(G)$. The following observation will be used several times. We will prove this in Section~\ref{sec:heavy_subgraphs_degree} by combining some basic estimates, see Lemma \ref{subgraph-lower-bound}(b).
\begin{Lemma} \label{lem:intro_2la+3}
For any induced subgraph $H$ of $G$ containing the master $o$ and its neighbors we have 
\[ \Ga_G = \Ga(\x) 
\geq \min\bigg( \Ga\big(\x\big|_H \big) , 2\la_G + 3 \bigg) .\]
\end{Lemma}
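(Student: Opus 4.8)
The plan is to split $V(G)$ into the vertices of $H$ and the remaining vertices, and to control the tail contribution using the fact that $o$ dominates all weights together with a crude but robust bound on the total weight outside $H$. Write $\x = \x\big|_H \oplus \x\big|_{G \sm H}$ (an orthogonal decomposition of $\mathbb R^{V(G)}$), so that $\|\x\|_1 = \|\x\big|_H\|_1 + \|\x\big|_{G\sm H}\|_1$ and $\|\x\|_2^2 = \|\x\big|_H\|_2^2 + \|\x\big|_{G\sm H}\|_2^2$. The quantity $\Ga(\y)$ is a ratio of a squared $\ell^1$ norm to a squared $\ell^2$ norm, and the elementary fact here is a \emph{mediant-type} inequality: if $\y = \y_1 \oplus \y_2$ with all entries positive, then
\[
\Ga(\y) = \frac{(\|\y_1\|_1 + \|\y_2\|_1)^2}{\|\y_1\|_2^2 + \|\y_2\|_2^2} \geq \min\left( \frac{\|\y_1\|_1^2}{\|\y_1\|_2^2}, \frac{\|\y_2\|_1^2}{\|\y_2\|_2^2} \right) \cdot \kappa
\]
for a suitable correction factor $\kappa \le 1$ coming from the cross term $2\|\y_1\|_1\|\y_2\|_1$ in the numerator; in fact, since the cross term is nonnegative, one already gets $\Ga(\y_1 \oplus \y_2) \ge \min(\Ga(\y_1), \Ga(\y_2))$ with no correction needed. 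So it suffices to show $\Ga\big(\x\big|_{G\sm H}\big) \geq 2\la_G + 3$, i.e.\ the tail vector alone is at least this balanced.

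The key step is therefore the tail estimate. Here I would use the eigenvalue equation at each vertex $w \in V(G)\sm V(H)$: $\la_G x_w = \sum_{u \sim w} x_u$. Since $H$ contains the master $o$ \emph{and all its neighbors}, no vertex of $G\sm H$ is adjacent to $o$; this is the reason for that hypothesis. The plan is to bound $\|\x\big|_{G\sm H}\|_2^2 = \sum_w x_w^2$ from above in terms of $\|\x\big|_{G\sm H}\|_1 = \sum_w x_w$ as follows. For each $w$ in the tail, $x_w \le x_o$; but more usefully, summing the eigenvalue equation over all tail vertices $w$ and comparing with $\sum_w x_w$, one extracts that the tail cannot be too concentrated. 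Concretely, $\sum_w x_w^2 \le x_o \sum_w x_w$ is too weak on its own; instead I would argue that for \emph{each} tail vertex $w$, the edges at $w$ force $x_w$ to be comparable to the weights of its neighbors, and a counting/telescoping argument over the tail (using that each edge is counted twice and that $\la_G \ge 2$ for any graph containing a path of length $\ge 2$, hence the relevant constant $2\la_G + 3$) yields $\sum_w x_w^2 \le \frac{1}{2\la_G+3}\big(\sum_w x_w\big)^2$. The cleanest route is probably: bound $\max_w x_w$ within the tail by $\frac{1}{\la_G}\cdot(\text{something})$ via the eigenvalue equation, combined with $\sum x_w^2 \le (\max_w x_w)(\sum_w x_w)$, and then show $\max_w x_w \le \frac{1}{2\la_G+3}\sum_w x_w$ — which should follow from a short argument about how weight must be spread out along the paths/components hanging off $H$.

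The main obstacle, I expect, is precisely pinning down the constant $2\la_G+3$ in the tail bound, rather than some weaker $\Omega(\la_G)$: one needs the eigenvalue equation to be exploited sharply. A vertex $w$ adjacent only to tail vertices satisfies $\la_G x_w = \sum_{u\sim w, u\in \text{tail}} x_u$, so its weight is an average (scaled by $1/\la_G$) of neighboring tail weights, forcing decay; a vertex $w$ with a neighbor in $H$ picks up a contribution from inside, but that contribution is at most $x_o$ per such edge, and there are few such edges. Carefully bookkeeping these two cases — and handling the degenerate possibility that $G\sm H$ is empty, where the claim is vacuous since the min is then just $\Ga(\x\big|_H) = \Ga_G$ — should give the bound. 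Since the excerpt attributes this to "combining some basic estimates" in Lemma \ref{subgraph-lower-bound}(b), I anticipate the actual argument is a two-line consequence of a slightly more general inequality about restricting $\Ga$ to a neighborhood-closed subgraph, with $2\la_G+3$ emerging from $2\la_G$ (twice the number of edge-contributions, morally) plus a small additive constant from the boundary; the plan is to isolate that general inequality first and then specialize.
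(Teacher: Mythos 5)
Your reduction via the mediant inequality is exactly where the argument breaks. The inequality $\Ga(\y_1\oplus\y_2)\ge\min\big(\Ga(\y_1),\Ga(\y_2)\big)$ is true, but the statement you then reduce to, namely $\Ga\big(\x\big|_{G\sm H}\big)\ge 2\la_G+3$, is false in general, so the lemma cannot be obtained along this route. By Cauchy--Schwarz, $\Ga$ of a positive vector never exceeds its number of coordinates, so your tail claim already fails whenever $G\sm H$ has fewer than $2\la_G+3$ vertices (a single outside vertex has ratio $1$); and it fails for long tails as well, since Perron weights decay geometrically along a pendant path with ratio $1/r(\la_G)$, so for $G=K_4+P_k$ and $H=N_G[o]$ the tail's ratio stays near $\frac{1+1/r}{1-1/r}\approx 2.15$, far below $2\la_G+3\approx 9.2$. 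For the same reason no bookkeeping with the eigenvalue equations at tail vertices can give $\max_w x_w\le\frac{1}{2\la_G+3}\sum_w x_w$ (again a one-vertex tail is a counterexample). Also, the hypothesis $H\supseteq N_G[o]$ is not there to keep outside vertices away from $o$; it is there to make the inside mass large compared with $x_o$.

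The mechanism that actually proves the lemma --- and this is the paper's route, via Proposition~\ref{prop:heavy_obs} and Lemma~\ref{subgraph-lower-bound}(a)--(b) --- is that every outside weight is small relative to the \emph{inside} $\ell^1$ mass, not that the outside vector is balanced on its own. The eigenvalue equation at $o$ gives $S:=\sum_{u\in V(H)}x_u\ge(\la_G+1)x_o$, hence $x_v\le x_o\le\frac{1}{\la_G+1}S$ for every $v\notin V(H)$, so $\Tt:=\sum_{v\notin V(H)}x_v^2\le\frac{1}{\la_G+1}S\St$ where $\St:=\sum_{v\notin V(H)}x_v$. Setting $\be:=\min\big(\Ga(\x|_H),2\la_G+3\big)$ and expanding, $(S+\St)^2-\be(T+\Tt)=(S^2-\be T)+2S\St+\St^2-\be\Tt$, and the cross term $2S\St$ together with the trivial bound $\St^2\ge\Tt$ absorbs $\be\Tt$ precisely because $\frac{2}{\be-1}\ge\frac{1}{\la_G+1}$; this is where the constant $2\la_G+3$ comes from. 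Your mediant reduction forces the outside block to carry its own weight (i.e.\ to be balanced), whereas the real proof lets the inside--outside cross term do the work; to repair your proposal you would have to keep that cross term and compare $\Tt$ with $S\St$ rather than with $\St^2$.
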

The proof of the conjecture consists of two parts. 

\subsubsection*{First part: identifying the kernel, i.e., determining what $G$ looks like around its master vertex.}
Our goal is to describe $G$ around its master vertex under the assumption that $\Ga_G<\bestar \approx 5.098$. 
In fact, we will choose a slightly larger target ratio $\beta=5.25$ and work under the weaker assumption $\Ga_G<5.25$. This way, by adding only a few extra complications to the proof, we get the following stronger result.
\begin{Thm} \label{thm:525}
Suppose that $G$ is a connected graph with $|V(G)| \geq 8$. If $\Gamma_G < 5.25$, then the master $o$ of $G$ is contained in a $4$-clique $C$ such that there is a single edge $e=(o,u)$ between $C$ and $V(G) \setminus C$. In other words, $G$ is the disjoint union of $K_4$ and a connected graph $G'$ joined by a single additional edge $(o,u)$, see Figure \ref{fig:K4_plus_Gpr}. In particular, the closed neighborhood $N_G[o]$ of the master must be isomorphic to $\KKKKP \, \cong K_4 + P_1$.

The same is true for connected graphs of order $7$ with the single exception $G=\DsPPP$, for which $\Gamma_G \approx 5.180545$.
\end{Thm}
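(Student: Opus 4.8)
The plan is to combine Lemma~\ref{lem:intro_2la+3} with the bounding method of Theorem~\ref{thm:intro_main_bound}. Any connected graph on at least three vertices contains $P_3$, so $\la_G \geq \la_{P_3} = \sqrt{2} > 1.125$ and hence $2\la_G + 3 > 5.25$. Consequently, for \emph{any} induced subgraph $H$ of $G$ with $o \in V(H)$, applying Lemma~\ref{lem:intro_2la+3} to $N_G[H] \supseteq N_G[o]$ gives $\Ga_G \geq \min\big(\Ga(\x|_{N_G[H]}),\, 2\la_G + 3\big)$, so $\Ga_G < 5.25$ forces $\Ga(\x|_{N_G[H]}) < 5.25$. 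Since $\x|_{N_G[H]}$ is exactly the vector $\xt$ of Theorem~\ref{thm:intro_main_bound}, it suffices to show, for each ``forbidden'' local structure $H$ around $o$, that the right-hand side of \eqref{eq:the_bound} stays $\geq 5.25$ for every admissible attachment pair $U,V$ and every $\la \geq \max(\la_U,\la_V)$; this rules out $H$ as an induced subgraph of $G$.

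The first step is to determine $N_G[o]$. Breadth-first search from $o$ produces a connected induced subgraph $H$ on at most six vertices — the \emph{kernel} — in which $o$ has degree at most $5$; one enumerates the resulting list (with over a hundred possibilities), where in the case $\deg_G(o) > 5$ the same list is used but $\partial w$ is additionally allowed to contain $o$. For every kernel $H$ other than $K_4 + P_2 = \KKKKPP\,$, one forms $B = (\la I - A_H)^{-1}$, its extension $\Bt$, and the quantities $s_V$, $c_{U,V}$, and checks by computer — via positivity of the coefficients of the relevant low-degree polynomials, as in Remark~\ref{rem:poly} — that
\[ \frac{(s_U+1)(s_V+1)}{c_{U,V} + \tfrac12 \Bt_{o,U} + \tfrac12 \Bt_{o,V}} \geq 5.25 \]
for every admissible $(U,V) = (\partial w, \partial w')$ and every $\la \geq \max(\la_U,\la_V)$. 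The admissibility constraints are crucial here: once $\deg(o)$ is already realised inside $H$ we know $o \notin \partial w$, and for certain kernels we also know $|\partial w| = 1$ or that $\partial w$ avoids other already-saturated vertices. This forces the kernel of $G$ to be $K_4 + P_2$, and in particular $N_G[o] \cong K_4 + P_1$.

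It remains to prove the single-edge condition. Let $C = \{o,t_1,t_2,t_3\}$ be the $4$-clique; since $\deg(o) = 4$, the pendant $u$ is the only neighbour of $o$ outside $C$, and since $N_G[o]$ is induced, $u \not\sim t_i$. If some $t_i$ had a neighbour $w \notin C$, then $w \notin N_G[o]$, so $\dist(o,w) = 2$, and $H := G[C \cup \{u,w\}]$ is one of finitely many six-vertex graphs ($K_4 + P_1$ together with an extra vertex $w$ joined to one, two or three of the $t_i$, and possibly to $u$). Running Theorem~\ref{thm:intro_main_bound} for each such $H$ — again using $\deg(o) = 4$, so $o \notin \partial w'$ for outside $w'$ — yields $\Ga(\xt) \geq 5.25$, contradicting $\Ga_G < 5.25$. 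Hence every neighbour of each $t_i$ lies in $C$, so $(o,u)$ is the unique edge between $C$ and $V(G) \setminus C$. For $|V(G)| = 7$ one repeats the kernel elimination, now with fewer available completions; exactly one further graph survives, namely $\DsPPP = D_s^{(+++)}$, the diamond with a pendant $P_3$ at its degree-$3$ vertex, for which a direct computation gives $\Ga_{D_s^{(+++)}} \approx 5.1805$.

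The main obstacle is the kernel elimination: there are many kernels, and several of them are ``near-misses'' of $K_4 + P_2$ for which \eqref{eq:the_bound} exceeds $5.25$ only by a small margin, so one must identify the admissible attachment families $\Uc^G_H$ exactly (too generous and the bound dips below $5.25$; too restrictive and the argument is incomplete) and certify the polynomial positivity reliably. The single-edge step shares this difficulty, since the six-vertex graphs occurring there are precisely the near-miss configurations; the remaining bookkeeping is routine.
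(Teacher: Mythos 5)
Your overall architecture -- BFS kernel around the master, the resolvent bound of Theorem~\ref{thm:intro_main_bound} combined with Lemma~\ref{lem:intro_2la+3}, then a separate step for the single-edge condition -- is the paper's architecture, and your final single-edge argument is essentially the paper's. But the central claim of your kernel-elimination step, namely that for \emph{every} kernel other than $K_4+_oP_2$ a single pass of the check \eqref{eq:coeff_check} at $\beta=5.25$ succeeds, is false, and the exception you yourself record refutes it. The graph $\DsPPP \cong D+_sP_3$ has $\Ga\approx 5.18<5.25$ and is an admissible extension of the kernel $\big(D+_sP_2,s\big)$, so no valid lower bound of the form \eqref{eq:the_bound} can stay $\geq 5.25$ uniformly for that kernel; the same failure occurs for all four exceptional kernels listed in \eqref{eq:exceptions} at the pair $U=V=\{v\}$ with $v$ the leaf (the Appendix carries this out for $K_3+_oP_3$, where the bound genuinely dips below $5.25$ near $\la_U$, not merely the positivity test of Remark~\ref{rem:poly}). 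The paper needs a \texttt{two-step verification} here: check the kernel with $\{v\}$ removed from $\Uc$, then check the enlarged $7$-vertex kernel $H^+=H+_vP_1$, and finally compute $\Ga$ directly for the three leftover graphs $K_3+P_4$, $D+_sP_3$, $D+_tP_3$. That is exactly where the order-$7$ exception comes from, and the issue is present for all $n\geq 7$ (a larger graph can still contain an exceptional kernel with a vertex attached at its leaf), not a matter of ``fewer available completions at $n=7$''. As written, your proposal simultaneously asserts checks that would exclude $D+_sP_3$ and lists it as an exception, so it cannot be correct as it stands.

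Two further cases are not covered. You never establish $\deg(o)\geq 3$: since $\la_G\leq\deg(o)$, masters of degree $1$ or $2$ occur exactly when $\la_G\leq 2$, and the paper eliminates those graphs via the Smith classification and Theorem~\ref{lambda2gamma} ($\Ga_G\geq 6$ for all of them on at least $7$ vertices); without this step the kernel list you invoke (the paper's $155$ rooted graphs, all with root degree at least $3$) does not cover all graphs, or else must be enlarged by low-degree-rooted kernels whose checks are nowhere verified. Likewise, your treatment of $\deg(o)\geq 6$ -- rerunning the same kernels with $o$ allowed in $\partial w$ -- replaces the paper's argument via Theorem~\ref{degree-6} (which gives $\Ga_G\geq\beta_6\approx 5.305$) with an unverified computation in a regime where the margin above $5.25$ is thin; it may work, but it cannot simply be asserted, and the paper's $\beta_d$ bound is the clean way around it.
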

\begin{figure}[ht]
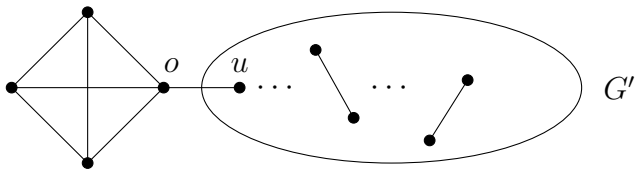

\centering
\KKKKoGG
\caption{If $\Gamma_G<5.25$, then $G$ contains a $K_4$ that is joined to the remaining part $G'$ by a single edge incident to the master vertex $o$.}
\label{fig:K4_plus_Gpr}
\end{figure}
\begin{proof}[Proof outline]
Suppose that $G$ is a connected graph with $|V(G)| \geq 7$ and $\Ga_G < 5.25$.

First of all, we may assume that $\la_G > 2$, and hence $\deg(o) \geq 3$. The reason is that connected graphs with $\la_G \leq 2$ have been characterized and one can easily compute $\Ga_G$ for these graphs and conclude that $\Ga_G>5.25$ for each of them. See Section \ref{sec:lambda_at_most_2} for details.

Moreover, we may also assume that $\deg(o) \leq 5$. This is due to Theorem \ref{degree-6} which provides a simple lower bound on $\Ga\big( \x \big|_{N_G[o] } \big)$. The bound $\be_d$ depends on the degree $d := \deg(o)$ of the master and it grows as $c \sqrt{d}$. The relevant fact is that $\Ga\big( \x \big|_{N_G[o]} \big) \geq \be_d > 5.3$ for $d \geq 6$. Combining this with Lemma \ref{lem:intro_2la+3}, it follows that $\Ga_G > 5.3$ whenever $\deg(o) \geq 6$.

It remains to consider the case when $3 \leq \deg(o) \leq 5$. Then we can choose a $6$-vertex induced subgraph $H \subset G$ containing $o$ and all its neighbors. We call the rooted graph $(H,o)$ the \emph{kernel}. It turns out that there are $155$ possibilities for the kernel. As we mentioned earlier, some of these kernels are difficult to rule out and we wanted to treat all kernels in a unified framework. Our general bounding technique was outlined in Section \ref{sec:key_method} and the bound itself was stated in Theorem \ref{thm:intro_main_bound}.\footnote{The theorem bounds $\Ga(\xt)$ but Lemma \ref{lem:intro_2la+3} can be used again to get a bound on $\Ga_G$, see Remark~\ref{rem:Gaxt_to_GaG} for details.} Note that $o$ cannot have any neighbors outside $H$, and hence we can disregard any $U,V$ containing $o$ when taking the minimum in Theorem \ref{thm:intro_main_bound}. For some kernels we can exclude further sets as well. (The details will be given in Section \ref{sec:kernel}.) Our computer check essentially excludes all but one kernel and we get that $\Ga_G<5.25$ is possible only if $(H,o)$ is isomorphic to $\KKKKPPr \, = \big( K_4 +_o P_2, o \big)$. From here the theorem follows easily.
\end{proof}

\subsubsection*{Second part: showing that the ``tail'' must be a single path.}
Once we know that the kernel of $G$ must be \KKKKPPr, we can start to look further away. Assume that the ``tail'' (i.e., the graph $G'$ in Theorem~\ref{thm:525}) is not a single path: suppose that the tail branches first at the $\ell$-th vertex, that is, $G \supset K_4 + P_\ell$ and the last vertex of $P_\ell$ has at least two further neighbors. We need to show that in this case $\Ga_G$ exceeds the limiting ratio $\bestar = \Gamma_{K_4 + P_\infty}$. In Section~\ref{sec:tail} we will prove a general result (Theorem~\ref{thm:Gamma-upper}) providing a set of conditions guaranteeing that $\Ga_G>\Ga_{H+_v P_\infty}$ for graphs $G$ extending $H$ with a ``branching tail''. Such results are true only with a small margin so we need to be very careful with our estimations. Once we checked the conditions of this general result in the special case $H=K_4$ (see Theorem~\ref{K4_tail}), we conclude that $\Ga_G < \bestar$ is only possible if $G \cong K_4 + P_{n-4}$. Of course, we also need to show that $\Ga_G$ is indeed below the limiting ratio for this particular graph which will be done in Section~\ref{sec:pendant_paths}, see Theorem~\ref{thm:indeed_below}. This part of the proof will also be done in a general setting: Lemma~\ref{lem:Gamma-lower} proves that $\Ga_{H+_v P_k} < \Ga_{H+_v P_\infty}$ holds under mild conditions.

\subsection{Program code}
We have uploaded our program code (Python/SageMath) to the following GitHub repository: \; 
\url{https://github.com/harangi/perron}.\\
Besides the code, there is a Jupyter notebook \cite{JUP} in the repository. It shows how to use the code to perform the necessary computer checks for our proofs. The notebook also contains supplementary material to help the reader better understand the nature of these conjectures and our methods.

\subsection*{Organization of the paper.} 
In the introduction we tried to give a good overview of our main results and methods. When we give the details in the rest of the paper, we try to do it with a somewhat broader scope: at many places we prove more than what is necessary for settling the conjectures. We opted for this approach because we hope that the tools we developed could be useful beyond these conjectures. However, we always point out the parts that may be skipped by a reader solely interested in the conjectures. 

The paper is organized as follows.
\begin{itemize}
\item Section~\ref{sec:basic_results}: basic facts regarding $\la_G$, $\Ga_G$ and the resolvent matrix.
\item Section~\ref{sec:lambda_at_most_2}:  $\Gamma_G$ for graphs with $\la_G \leq 2$.
\item Section~\ref{sec:heavy_subgraphs_degree}: heavy subgraphs are introduced and $\Ga_G$ is bounded solely in terms of the degree of the master vertex.
\item Section~\ref{sec:key_bounds}: the key bounding method.
\item Section~\ref{sec:kernel}: the computer-assisted proof (based on the tools of Section~\ref{sec:key_bounds}) that the minimizing graphs/trees have the conjectured kernel.
\item Section~\ref{sec:pendant_paths}: results about $\la_G$ and $\Ga_G$ for graphs with (finite or infinite) pendant paths.
\item Section~\ref{sec:tail}: estimating $\Ga_G$ when the ``tail branches''.
\item Section~\ref{sec:the_end}: putting the puzzle pieces together to prove the main results (Theorems~\ref{thm:only_K4Pk} and \ref{thm:only_S5Pk}).
\end{itemize}

\section{Basic results and estimates} \label{sec:basic_results}

We use the notations introduced in Section \ref{sec:notations}. In particular, $o$ denotes the master vertex of $G$ (that has the largest entry $x_o$ of the Perron vector $\x$). 

The following claim shows that small $\Gamma_G$ implies that the master vertex has a relatively large weight, and the largest eigenvalue is small.

\begin{Lemma} \label{Gamma-lambda bound}
We have\\
(a) $x_o\geq \frac{1}{\sqrt{\Gamma_G}}\|\x\|_2 =\frac{1}{\Gamma_G}\|\x\|_1$;\\
(b) $\Gamma_G-1\geq \la_G$.
\end{Lemma}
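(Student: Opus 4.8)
Both parts follow from combining the eigenvalue equation at the master vertex with elementary norm inequalities. For part (a), I would start from the defining relation $\Gamma_G = \|\x\|_1^2 / \|\x\|_2^2$, which immediately gives $\|\x\|_2 = \|\x\|_1 / \sqrt{\Gamma_G}$, so the two expressions on the right are equal; it remains to show $x_o \geq \|\x\|_1 / \Gamma_G$. Since $x_o = \max_v x_v$, we have $\|\x\|_2^2 = \sum_v x_v^2 \leq x_o \sum_v x_v = x_o \|\x\|_1$. Dividing by $\|\x\|_1$ and using $\|\x\|_2^2 = \|\x\|_1^2/\Gamma_G$ yields $x_o \geq \|\x\|_1/\Gamma_G$, as desired. (Equivalently, $x_o \geq \|\x\|_2^2/\|\x\|_1 = \|\x\|_2/\sqrt{\Gamma_G}$.)

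For part (b), the plan is to exploit the eigenvalue equation $\la_G x_o = \sum_{v \in N(o)} x_v$ at the master vertex. Since each $x_v \leq x_o$ and $o$ has at most $|V(G)|-1$ neighbors, one gets the crude bound $\la_G \leq \deg(o) \leq |V(G)| - 1$; combined with $\Gamma_G \leq |V(G)|$ this is not quite enough, so a sharper argument is needed. Instead I would write $\la_G x_o = \sum_{v \in N(o)} x_v \leq \sum_{v \neq o} x_v = \|\x\|_1 - x_o$, hence $(\la_G + 1) x_o \leq \|\x\|_1$. Now apply part (a), which says $x_o \geq \|\x\|_1/\Gamma_G$: substituting gives $(\la_G+1)\|\x\|_1/\Gamma_G \leq (\la_G+1)x_o \leq \|\x\|_1$, and dividing by $\|\x\|_1/\Gamma_G > 0$ yields $\la_G + 1 \leq \Gamma_G$, i.e. $\Gamma_G - 1 \geq \la_G$.

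The only mild subtlety — hardly an obstacle — is making sure the inequality $\sum_{v \in N(o)} x_v \leq \|\x\|_1 - x_o$ is used correctly: it holds because $N(o)$ is a subset of $V(G) \setminus \{o\}$ and all entries are positive, so summing over the (possibly smaller) neighbor set is at most the sum over all non-master vertices. This requires the graph to have at least one vertex besides $o$, which is automatic since $G$ is connected with $\la_G > 0$ (a single vertex has $\la_G = 0$ and the statement is vacuous or trivial there). Everything else is a direct chain of elementary estimates, so I expect no real difficulty; the proof is short and self-contained, relying only on positivity of the Perron vector and the maximality of $x_o$.
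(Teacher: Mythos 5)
Your proposal is correct and follows essentially the same route as the paper: part (a) via the bound $\sum_v x_v^2 \leq x_o \sum_v x_v$, and part (b) via the eigenvalue equation at the master vertex giving $(\la_G+1)x_o \leq \|\x\|_1$, combined with (a). The only difference is cosmetic (you work with $x_o \geq \|\x\|_1/\Gamma_G$ while the paper uses the equivalent form $x_o \geq \|\x\|_2/\sqrt{\Gamma_G}$).
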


\begin{proof}
(a) The simple bound  
$\sum_{v\in V}x_v^2\leq x_o\sum_{v\in V}x_v$ gives 
\[ x_o\geq \frac{\sum_{v\in V}x_v^2}{\sum_{v\in V}x_v}=\frac{\|\x\|_2}{\|\x\|_1}\|\x\|_2=\frac{1}{\sqrt{\Gamma_G}}\|\x\|_2. \]
(b) Since 
\[ \|\x\|_1=\sum_{v\in V}x_v\geq x_o+\sum_{u\in N_G(o)}x_u=(\la_G+1)x_o ,\]
we get that 
\[ \frac{\|\x\|_1}{\la_G+1}\geq x_o \geq \frac{1}{\sqrt{\Gamma_G}}\|\x\|_2 ,\]
and hence
\[ \Gamma_G=\frac{\|\x\|_1}{\|\x\|_2}\sqrt{\Gamma_G}\geq \la_G+1. \]
\end{proof}

\begin{Rem}
It follows from (b) that if we are to search for graphs with $\Gamma_G<\beta$, then it suffices to do so among graphs with $\la_G<\beta-1$.
\end{Rem}

\begin{Lemma}
Let $\x_1$ and $\x_2$ be two positive vectors in $\mathbb{R}^n$. Then for any $\alpha \in (0,1)$ we have
\[\Gamma(\alpha \x_1+(1-\alpha)\x_2)\geq \min(\Gamma(\x_1),\Gamma(\x_2)).\]
\end{Lemma}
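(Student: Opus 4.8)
The plan is to prove the claimed convexity-type bound for $\Ga$ by a direct computation that reduces to a single-variable inequality. Write $\Ga(\y) = \|\y\|_1^2 / \|\y\|_2^2$, and set $\y_t := t\x_1 + (1-t)\x_2$ for $t \in [0,1]$, so that the endpoint values are $\Ga(\y_1) = \Ga(\x_1)$ and $\Ga(\y_0) = \Ga(\x_2)$. Since both $\x_1$ and $\x_2$ have all positive entries, so does $\y_t$, hence $\|\y_t\|_1 = \sum_i (\y_t)_i$ is a linear (affine) function of $t$, while $\|\y_t\|_2^2 = \sum_i (\y_t)_i^2$ is a quadratic function of $t$. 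The key structural fact is that $f(t) := \|\y_t\|_1^2$ is a convex quadratic in $t$ (being the square of an affine function) and $g(t) := \|\y_t\|_2^2$ is also a convex quadratic in $t$; one then wants to show that $f(t)/g(t) \ge \min\big( f(0)/g(0), f(1)/g(1) \big)$ on $[0,1]$.

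The main step is therefore the following elementary lemma: if $f$ and $g$ are polynomials of degree at most $2$ with $g > 0$ on $[0,1]$, $f \ge 0$ on $[0,1]$, and \emph{both} $f$ and $g$ are convex (i.e.\ have nonnegative leading coefficient), then $h := f/g$ attains its minimum on $[0,1]$ at one of the endpoints. I would prove this by analyzing the critical points of $h$: we have $h' = (f'g - fg')/g^2$, and the numerator $f'g - fg'$ is a polynomial of degree at most $3$ (in fact at most $2$, since the degree-$3$ terms cancel when $\deg f, \deg g \le 2$). A cleaner route: fix the endpoint values and use the substitution that reduces everything to showing $f(t)/g(t)$ cannot dip strictly below both endpoint values, by noting that at an interior local minimum $t_0$ we would have $f'(t_0) = h(t_0) g'(t_0)$, and then comparing the convexity of $f$ with that of $h(t_0)\, g$ (which is also convex, since $h(t_0) > 0$); this forces $f - h(t_0) g$ to be a convex quadratic vanishing to second order at $t_0$, hence nonnegative everywhere, which would make $t_0$ the global minimum on all of $\mathbb{R}$ — but then comparing with the behavior as $t \to \pm\infty$, where $f/g \to (\text{ratio of leading coefficients})$, gives the contradiction unless $h$ is constant. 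One must handle the degenerate cases separately (e.g.\ $\|\x_1\|_1 = \|\x_2\|_1$, or $f$ or $g$ actually linear, or $\x_1, \x_2$ proportional), but these are easy.

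An alternative, possibly slicker, approach avoids calculus entirely: write $\Ga(\y)^{-1} = \|\y\|_2^2 / \|\y\|_1^2$ and observe that $\y \mapsto \|\y\|_2^2 / \|\y\|_1^2$ is the ratio of a quadratic form to the square of a linear form on the positive cone; restricting to the segment and normalizing so that $\|\y_t\|_1$ is constant (reparametrize the segment — possible since $\|\y_t\|_1 > 0$ is affine and nonvanishing on $[0,1]$), the problem becomes: the square of the $\ell^2$-norm along a \emph{straight line segment in $\ell^1$-normalized coordinates} is convex, hence maximized at an endpoint. More precisely, after reparametrizing we get $\|\y_s\|_2^2$ with $\y_s$ affine in $s$, and the square of an affine curve's $\ell^2$-norm is a convex function of $s$, so it is at most the maximum of its endpoint values; this immediately yields $\Ga(\y_s)^{-1} \le \max\big( \Ga(\x_1)^{-1}, \Ga(\x_2)^{-1}\big)$, i.e.\ $\Ga(\y_s) \ge \min\big(\Ga(\x_1), \Ga(\x_2)\big)$, and translating back from the reparametrized $s$ to the original $\al$ finishes the proof.

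I expect the reparametrization bookkeeping to be the only place requiring care: one must check that as $\al$ ranges over $(0,1)$, the reparametrized variable $s$ stays in the corresponding interval between the endpoints (which holds because $\|\y_\al\|_1$ is affine and positive, so the map $\al \mapsto s$ is a monotone bijection of the relevant interval), and that the leading coefficient of the relevant quadratic is indeed nonnegative (it is a sum of squares of the differences of coordinates of $\x_1$ and $\x_2$, so this is automatic). No genuine obstacle is anticipated; the statement is essentially the observation that $1/\Ga$ is quasiconvex — indeed convex after $\ell^1$-normalization — on the positive cone.
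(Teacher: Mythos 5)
Your ``alternative'' (second) argument is correct and complete, and it takes a genuinely different route from the paper. Your route: since $\Ga$ is scale-invariant and all entries are positive, $\|\al\x_1+(1-\al)\x_2\|_1=\al\|\x_1\|_1+(1-\al)\|\x_2\|_1>0$, so the $\ell^1$-normalization of $\al\x_1+(1-\al)\x_2$ lies on the segment joining $\x_1/\|\x_1\|_1$ and $\x_2/\|\x_2\|_1$; along an affine path $\|\cdot\|_2^2$ is convex, hence bounded by the larger endpoint value, giving $\Ga(\al\x_1+(1-\al)\x_2)^{-1}\le \max\big(\Ga(\x_1)^{-1},\Ga(\x_2)^{-1}\big)$. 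The paper instead argues in three lines with no reparametrization: with $\be=\min(\Ga(\x_1),\Ga(\x_2))$, positivity gives $\|\al\x_1+(1-\al)\x_2\|_1=\al\|\x_1\|_1+(1-\al)\|\x_2\|_1\ge \sqrt{\be}\big(\al\|\x_1\|_2+(1-\al)\|\x_2\|_2\big)$, and the triangle inequality for the $\ell^2$ norm then yields $\ge\sqrt{\be}\,\|\al\x_1+(1-\al)\x_2\|_2$; squaring finishes. Your version isolates the conceptual statement (after $\ell^1$-normalization, $1/\Ga$ is convex on the positive cone, so $\Ga$ is quasiconcave there) at the cost of some bookkeeping; the paper's is shorter and uses nothing beyond linearity of $\|\cdot\|_1$ on positive vectors and the triangle inequality.

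Your first (primary) route, however, is broken as written: the ``elementary lemma'' you rely on --- that $f/g$ is minimized at an endpoint of $[0,1]$ whenever $f\ge 0$ and $g>0$ are convex polynomials of degree at most $2$ --- is false. Take $f(t)=(t-\tfrac12)^2+\eps$ and $g(t)=1$ (or $g(t)=1+\delta t^2$ if you insist on genuine quadratics): both are positive and convex, yet $f/g$ has an interior minimum strictly below both endpoint values. Accordingly, the sketched contradiction step does not materialize: showing that an interior critical point $t_0$ is a global minimum of $f/g$ contradicts nothing about the behavior at $\pm\infty$, and indeed cannot, since the conclusion fails in general. What makes your actual situation work is not the convexity of $f(t)=\|\y_t\|_1^2$ but the fact that it is the square of a \emph{positive affine} function of $t$ --- which is precisely the structure your normalization argument exploits. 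So keep the second argument and discard the first lemma.
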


\begin{proof}
Let $\beta=\min(\Gamma(\x_1),\Gamma(\x_2))$.
Then we have
\begin{align*}
\|\alpha \x_1+(1-\alpha)\x_2\|_1^2&=(\alpha \|\x_1\|_1+(1-\alpha)\|\x_2\|_1)^2\\
&\geq (\alpha \sqrt{\beta}
\|\x_1\|_2+(1-\alpha)\sqrt{\beta}\|\x_2\|_2)^2\\
&=\beta (\alpha \|\x_1\|_2+(1-\alpha)\|\x_2\|_2)^2\\
&\geq \beta \|\alpha \x_1+(1-\alpha)\x_2\|_2^2.
\end{align*}
\end{proof}

\begin{Lemma} \label{reverse-AM-QM}
Suppose that $x_1,\dots ,x_k$ satisfy $m\leq x_1,\dots ,x_k\leq M$ and $\sum_{i=1}^k x_i=S$. Then
\[\frac{\left(\sum_{i=1}^k x_i\right)^2}{\sum_{i=1}^k x_i^2}\geq \frac{S^2}{(m+M)S-kmM}.\]
\end{Lemma}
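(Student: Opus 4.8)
The plan is to prove the equivalent statement that $\sum_{i=1}^k x_i^2 \le (m+M)S - kmM$ under the constraints $m \le x_i \le M$ and $\sum x_i = S$; dividing $S^2$ by this upper bound then gives the claimed lower bound on the ratio. The key observation is the elementary pointwise inequality: for any real $t$ with $m \le t \le M$, one has
\[
(t-m)(M-t) \ge 0,
\]
which rearranges to $t^2 \le (m+M)t - mM$. This is exactly the linear majorant of the convex function $t \mapsto t^2$ on the interval $[m,M]$, touching it at the endpoints.

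First I would apply this inequality with $t = x_i$ for each $i$, obtaining $x_i^2 \le (m+M)x_i - mM$. Then I would sum over $i = 1, \dots, k$:
\[
\sum_{i=1}^k x_i^2 \le (m+M)\sum_{i=1}^k x_i - kmM = (m+M)S - kmM.
\]
Finally, since the left-hand side is positive and $\left(\sum x_i\right)^2 = S^2$, dividing yields
\[
\frac{\left(\sum_{i=1}^k x_i\right)^2}{\sum_{i=1}^k x_i^2} \ge \frac{S^2}{(m+M)S - kmM},
\]
as desired. (One should note in passing that the denominator on the right is indeed positive: it equals $\sum_i (x_i-m)(M-x_i) + \sum_i x_i^2 \ge \sum_i x_i^2 > 0$, so the division is legitimate.)

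There is no real obstacle here — this is a one-line convexity/tangent-line argument, essentially a quantitative reverse Cauchy--Schwarz (a discrete Kantorovich-type inequality). The only point requiring minor care is confirming the positivity of the denominator so that the final division preserves the inequality direction; as noted above, this follows immediately from the same factored form $(x_i - m)(M - x_i) \ge 0$ that drives the whole proof. Equality holds precisely when each $x_i$ equals $m$ or $M$.
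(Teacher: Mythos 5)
Your proof is correct and follows exactly the same route as the paper: expanding $\sum_i (M-x_i)(x_i-m)\geq 0$ to get $\sum_i x_i^2 \leq (m+M)S - kmM$ and then dividing. Your added remark confirming positivity of the denominator is a small welcome extra beyond the paper's version, but the argument is otherwise identical.
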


\begin{proof}
We clearly have $\sum_{i=1}^k(M-x_i)(x_i-m)\geq 0$. It follows that 
\[(m+M)\sum_{i=1}^k x_i-kmM \geq \sum_{i=1}^k x_i^2 .\]
Therefore 
\[\frac{\left(\sum_{i=1}^k x_i\right)^2}{\sum_{i=1}^k x_i^2}\geq \frac{S^2}{(m+M)S-kmM}.\]
\end{proof}

\begin{Lemma}[Perturbation lemma] \label{perturbation}
Suppose that the vector $\x=(x_0,x_1,\dots)$ has nonnegative entries. For some $0 < \varepsilon \leq x_0$ let $\x'=(x_0-\varepsilon,x_1,x_2,\dots)$. Then 
\[ \Ga(\x') < \Ga(\x) 
\quad \text{provided that} \quad 
x_0 < \frac{\sum_{i=0}^\infty x_i^2}{\sum_{i=0}^\infty x_i} .\]
\end{Lemma}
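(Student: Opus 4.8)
The plan is to compute $\Ga(\x')$ directly and compare it with $\Ga(\x)$ by treating everything as a function of the perturbation parameter. Write $A_1 = \sum_{i \geq 0} x_i = \|\x\|_1$ and $A_2 = \sum_{i \geq 0} x_i^2 = \|\x\|_2^2$, so that $\Ga(\x) = A_1^2/A_2$. Replacing $x_0$ by $x_0 - \eps$ changes the $\ell^1$ norm to $A_1 - \eps$ and the sum of squares to $A_2 - 2\eps x_0 + \eps^2$ (using $(x_0-\eps)^2 = x_0^2 - 2\eps x_0 + \eps^2$). Thus
\[
\Ga(\x') = \frac{(A_1-\eps)^2}{A_2 - 2\eps x_0 + \eps^2}.
\]
The claim $\Ga(\x') < \Ga(\x)$ is then equivalent, after cross-multiplying by the (positive) denominators, to the inequality $(A_1-\eps)^2 A_2 < A_1^2 (A_2 - 2\eps x_0 + \eps^2)$.

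Next I would expand both sides and cancel the common term $A_1^2 A_2$. The left side is $A_1^2 A_2 - 2\eps A_1 A_2 + \eps^2 A_2$, and the right side is $A_1^2 A_2 - 2\eps x_0 A_1^2 + \eps^2 A_1^2$. After cancellation the desired inequality becomes
\[
-2\eps A_1 A_2 + \eps^2 A_2 < -2\eps x_0 A_1^2 + \eps^2 A_1^2 ,
\]
i.e., dividing through by $\eps > 0$ and rearranging,
\[
2 x_0 A_1^2 - 2 A_1 A_2 < \eps (A_1^2 - A_2).
\]
The left-hand side equals $2 A_1 (x_0 A_1 - A_2)$, which is negative precisely under the hypothesis $x_0 < A_2 / A_1$. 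So it suffices to check that the right-hand side $\eps(A_1^2 - A_2)$ is nonnegative, i.e.\ that $A_1^2 \geq A_2$; but $A_1^2 = \|\x\|_1^2 \geq \|\x\|_2^2 = A_2$ is immediate since the entries are nonnegative (equivalently, $\Ga(\x) \geq 1$). Hence the left side is strictly negative while the right side is nonnegative, and the inequality holds.

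I should handle two small points of rigor. First, one must ensure the denominator $A_2 - 2\eps x_0 + \eps^2$ of $\Ga(\x')$ is positive so that the cross-multiplication is valid and $\Ga(\x')$ is well-defined: since $0 < \eps \leq x_0$ we have $(x_0-\eps)^2 \geq 0$, and the remaining terms $\sum_{i\geq 1} x_i^2$ are nonnegative, with the vector not identically zero (else $\Ga$ is undefined), so the denominator is indeed positive. Second, although the problem is stated for countably infinite vectors, the convergence of $A_1$ and $A_2$ should be assumed (it holds in all our applications, where $\x$ is an $\ell^1\cap\ell^2$ eigenvector); the algebra above is then literally the same. There is no real obstacle here — the statement is a short computation — but the one place to be careful is getting the direction of every inequality right, in particular recognizing that the hypothesis $x_0 < A_2/A_1$ is exactly what makes $x_0 A_1 - A_2 < 0$, and that strictness is preserved because $A_1 > 0$.
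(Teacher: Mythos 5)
Your proposal is correct and follows essentially the same route as the paper: both compute $\Ga(\x')=(S-\eps)^2/(T-2x_0\eps+\eps^2)$, cross-multiply, cancel, and observe that the hypothesis $x_0<T/S$ makes the linear-in-$\eps$ term favorable while $S^2\geq T$ handles the quadratic term. Your extra remarks on positivity of the denominator and convergence are fine but inessential.
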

\begin{proof}
Let $S:=\sum_{i=0}^\infty x_i$ and $T:=\sum_{i=0}^\infty x_i^2$.
Then 
\[ \Ga(\x) = \frac{S^2}{T} 
\quad \text{and} \quad 
\Ga(\x') = \frac{(S-\eps)^2}{T-2x_0\eps+\eps^2} .\]
So $\Ga(\x) > \Ga(\x')$ is equivalent to 
\[ 2\eps S(T-S x_0)+(S^2-T)\eps^2 > 0.\]
The first term is positive by our assumption $x_0<T/S$, while the second term is always non-negative since $S^2 \geq T$.
\end{proof}

\begin{Lemma} \label{prop:la&deg} 
Let $o$ be the master vertex of a graph $G$ and let $d :=\deg(o)$. Then 
\[\sqrt{d} \leq \la_G \leq d .\]
\end{Lemma}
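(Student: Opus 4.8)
The plan is to derive both inequalities directly from the eigenvalue equation $A_G \x = \la_G \x$ together with positivity of $\x$ (Frobenius--Perron) and the defining property of the master vertex, namely $x_v \le x_o$ for all $v \in V(G)$.

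For the upper bound $\la_G \le d$, I would simply write the eigenvalue equation at the vertex $o$:
\[ \la_G x_o = \sum_{u \in N_G(o)} x_u \le \sum_{u \in N_G(o)} x_o = d\, x_o , \]
using $x_u \le x_o$ for each neighbor $u$. Since $x_o > 0$, dividing gives $\la_G \le d$. (This is just the usual bound $\la_G \le \Delta(G)$ specialized to $o$, which works here precisely because $o$ carries the maximal weight.)

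For the lower bound $\la_G \ge \sqrt{d}$, I would apply the eigenvalue equation twice. Starting from $\la_G x_o = \sum_{u \in N_G(o)} x_u$ and applying $A_G$ again,
\[ \la_G^2 x_o = \sum_{u \in N_G(o)} \la_G x_u = \sum_{u \in N_G(o)} \sum_{w \in N_G(u)} x_w \ge \sum_{u \in N_G(o)} x_o = d\, x_o , \]
where the inequality keeps, for each $u \in N_G(o)$, only the term $w = o$ in the inner sum (legitimate since all $x_w > 0$, and $o \in N_G(u)$ whenever $u \in N_G(o)$). Dividing by $x_o > 0$ yields $\la_G^2 \ge d$, i.e.\ $\la_G \ge \sqrt{d}$.

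There is no real obstacle here: both halves are one-line consequences of the eigenvalue equation. The only points requiring a word of justification are that $x_o > 0$ (Frobenius--Perron, already recalled in the introduction) and that $o$ being the master means $x_u \le x_o$ for every vertex, which is exactly what makes the neighbor sum at $o$ comparable to $d\,x_o$ from above.
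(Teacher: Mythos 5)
Your proof is correct. The upper bound is exactly the paper's argument: the eigenvalue equation at $o$ together with $x_u \le x_o$ gives $\la_G x_o \le d\,x_o$. For the lower bound you take a genuinely different (though equally short) route. The paper observes that $G$ contains the star $S_{d+1}$ as a subgraph and invokes monotonicity of the spectral radius under taking subgraphs, together with $\la_{S_{d+1}}=\sqrt{d}$. You instead apply the eigenvalue equation twice at $o$, discard all terms in the double sum except the returns to $o$, and get $\la_G^2 x_o \ge d\,x_o$ directly; this is essentially the quadratic-form bound $\la_G^2 \ge (A_G^2)_{oo}\,$-style argument localized at one vertex. Your version is slightly more self-contained (it needs only positivity of $\x$ and the eigenvalue equation, not the subgraph-monotonicity fact or the value of $\la_{S_{d+1}}$), while the paper's is the more standard "contains a star" one-liner; also note that, as in the paper, your lower bound never actually uses that $o$ is the master vertex — it holds at any vertex — whereas the master property is what makes the upper bound work. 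Both arguments are complete and correct.
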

\begin{proof}
On the one hand, $G$ contains the star graph $S_{d+1}$ as a subgraph, therefore we have $\la_G \geq \la_{S_{d+1}}=\sqrt{d}$.

On the other hand, the eigenvalue equation at the master vertex tells us that 
\[ \la_G x_o = \sum_{v \in N_G(o)} x_v \leq d x_o, 
\; \text{and hence} \; \la_G \leq d .\]
\end{proof}

\subsection{Resolvent matrix} \label{sec:resolvent}

Next we introduce a key object for our approach, a matrix associated to any (connected) graph $H$ and real number $\la > \la_H$, which enables us to study the Perron vector of a graph through a subgraph.
\begin{Def} \label{def:B}
For a given finite graph $H$ and $\la > \la_H$ let
\begin{equation} \label{eq:B_def}
B = B_H(\la) := (\la \cdot I - A_H)^{-1} ,
\end{equation}
where $A_H$ is the adjacency matrix of $H$ and $I$ is the identity matrix. Note that the inverse exists because $\la$ is not an eigenvalue of $A_H$ since it is larger than the top eigenvalue $\la_H$. 
\end{Def}

\begin{Rem}
Note that $-B$ is the so-called \emph{resolvent} of the matrix $A_H$. It is a well-studied object in spectral graph theory; see e.g.~\cite[Chapter 4]{godsil2017algebraic}.
\end{Rem}

\begin{Prop} \label{prop:B_properties}
The matrix $B=B_H(\la)$ has the following properties for $\la>
\la_H$.
\begin{itemize}
\item[(a)] $B$ can be expressed by the following power series formula:
\begin{align} \label{eq:B_power_series}
B = \la^{-1} \left(I - \frac{A_H}{\la} \right)^{-1} 
= \la^{-1} \sum_{k=0}^\infty \left( \frac{A_H}{\la} \right)^k 
= \sum_{k=0}^\infty \frac{A_H^k}{\la^{k+1}}.
\end{align}
\item[(b)] $B$ is a symmetric matrix with positive entries.
\item[(c)] As a function of $\la$, each entry of $B$ is a strictly monotone decreasing function.
\item[(d)] Let $P_H(\la) :=\det(\la I - A_H)$ be the characteristic polynomial of $A_H$. It has integer coefficients and degree $|V(H)|$. We have 
\begin{equation} \label{eq:adj}
P_H(\la) B_H(\la) = \mathrm{adj}(\la I - A_H) .
\end{equation}
Each entry of the adjugate matrix on the right is a polynomial in $\la$ with integer coefficients and degree at most $|V(H)|-1$. Consequently, each entry of $B$ is a rational function of $\la$.
\item[(e)] Asymptotics as $\la \searrow \la_H$: we have $\lim_{\la\to \la_H}(\la-\la_H)B_{H}(\la)=\x_H\x_H^T$, where $\x_H$ is the Perron vector of $H$ normalized in such a way that $\|\x_H\|_2=1$.
\item[(f)] Asymptotics as $\la \to \infty$: we have $\lim_{\la\to \infty} \la B_{H}(\la)=I$.
\end{itemize}
\end{Prop}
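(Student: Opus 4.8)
The plan is to derive all six properties from two elementary starting points: the Neumann (power) series expansion of $B$ and the spectral decomposition of the symmetric matrix $A_H$.

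First I would prove (a). Since $A_H$ is symmetric, its operator norm equals its spectral radius, which is $\la_H$; hence for $\la > \la_H$ we have $\|A_H/\la\|_{\mathrm{op}} < 1$, so the Neumann series $\sum_{k\geq 0}(A_H/\la)^k$ converges and equals $(I - A_H/\la)^{-1}$. Multiplying by $\la^{-1}$ yields \eqref{eq:B_power_series}. Properties (b), (c), (f) then follow quickly from this. Symmetry of $B$ is immediate because $\la I - A_H$ is symmetric. For positivity, the $(u,v)$ entry of $A_H^k$ counts walks of length $k$ from $u$ to $v$; this is nonnegative for every $k$ and strictly positive when $k=\dist_H(u,v)$ (using that $H$ is connected), so $B_{u,v} = \sum_{k\geq 0}(A_H^k)_{u,v}\,\la^{-(k+1)} > 0$. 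The series converges locally uniformly in $\la$ on $(\la_H,\infty)$, so we may differentiate term by term: $B_{u,v}'(\la) = -\sum_{k\geq 0}(k+1)(A_H^k)_{u,v}\,\la^{-(k+2)} < 0$, which gives (c). Finally, $\la B = \sum_{k\geq 0}(A_H/\la)^k \to I$ as $\la \to \infty$, proving (f).

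For (d) I would invoke Cramer's rule in the form $\det(M)\,M^{-1} = \mathrm{adj}(M)$, valid for any invertible matrix, with $M = \la I - A_H$; this is precisely \eqref{eq:adj}. The characteristic polynomial $P_H(\la) = \det(\la I - A_H)$ is monic of degree $|V(H)|$ with integer coefficients because $A_H$ is a $0$--$1$ matrix, while each entry of the adjugate is, up to sign, the determinant of an $(|V(H)|-1)\times(|V(H)|-1)$ submatrix of $\la I - A_H$, hence a polynomial in $\la$ with integer coefficients and degree at most $|V(H)|-1$. Dividing shows each $B_{u,v}$ is a rational function of $\la$.

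Property (e) requires the most care, and I would treat it via the spectral theorem together with Perron--Frobenius. Writing $A_H = \sum_i \mu_i \Pi_i$ for the spectral decomposition, with $\mu_1 = \la_H > \mu_2 \geq \cdots$ and $\Pi_i$ the orthogonal projection onto the $\mu_i$-eigenspace, we obtain $B = \sum_i (\la - \mu_i)^{-1}\Pi_i$, so $(\la - \la_H)B = \Pi_1 + \sum_{i\geq 2}\frac{\la - \la_H}{\la - \mu_i}\Pi_i$. As $\la \searrow \la_H$, every term in the sum tends to $0$ because the $\mu_i$ with $i\geq 2$ are bounded away from $\la_H$, so $(\la-\la_H)B \to \Pi_1$. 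The point where connectedness of $H$ enters is that $\la_H$ is a simple eigenvalue (Perron--Frobenius), whence $\Pi_1 = \x_H\x_H^T$ for the unit-norm Perron vector $\x_H$; this identifies the limit and finishes (e). The only genuine subtleties are justifying the term-by-term differentiation in (c) and quoting the simplicity of the Perron eigenvalue in (e) — both entirely standard — so I do not expect any real obstacle beyond routine bookkeeping.
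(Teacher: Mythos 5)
Your proposal is correct and follows essentially the same route as the paper: the Neumann series for (a)--(c) and (f), the adjugate formula for (d), and the spectral decomposition for (e). You supply a bit more detail than the paper (walk-counting for positivity, term-by-term differentiation, simplicity of $\la_H$ via Perron--Frobenius), but these are exactly the routine points the paper leaves implicit.
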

\begin{proof}
(a) Note that $A_H$ is a symmetric matrix so $\| A_H \|=\la_H$. Since $\la>\la_H$, $\| A_H\big/ \la \|<1$, and hence the formal power series in \eqref{eq:B_power_series} is indeed convergent and equal to $B$.

Parts (b), (c) and (f) are immediate consequences of the power series formula, while (d) follows from the adjugate formula for the inverse of a nonsingular matrix.

Finally, (e) follows from the spectral decomposition formula. Indeed, let $\v_1,\dots ,\v_n$ be an orthonormal basis consisting of eigenvectors of $A_H$ so that $A_H\v_k=\la_k\v_k$. We may choose $\la_1=\la_H$ and $\v_1=\x_H$. Then, by spectral decomposition, we have the following formula for $B_H$:
\[B_H(\la)=\sum_{k=1}^n\frac{1}{\la-\la_k}\v_k\v_k^T ,\]
and (e) clearly follows.
\end{proof}
Next we explain how one can make use of this matrix to study the principal eigenvector $\x$ of a graph $G$ extending $H$. 
\begin{Lemma} \label{lem:inside_weights}
Let $G$ be a connected graph and $H$ an induced subgraph of $G$. Set $\la$ to be the top eigenvalue $\la_G$ of the large graph and consider the matrix $B=B_H(\la)$.\footnote{Note that $\la = \la_G > \la_H$ as required.} 

For each $v \in V(H)$ we define $y_v$ to be the sum of the weights of the neighbors of $v$ outside $H$:
\[ y_v :=\sum_{w \in N_G(v) \setminus V(H)} x_w,\]
and we write $\y$ for the vector $\big( y_v \big)_{v \in V(H)}$. Then 
\[ \x\big|_{H} = B \y ,\]
where 
$\displaystyle \x\big|_{H} = \big( x_v \big)_{v \in V(H)} $
denotes the restriction of $G$'s principal eigenvector $\x$ to $V(H)$.

Therefore, for a fixed $\la$, we can intuitively think of $B_{u,v}$ as the change in $x_u$ caused by an ``additional unit outside weight'' at $v$. 
\end{Lemma}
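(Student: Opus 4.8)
The plan is to prove the identity $\x\big|_H = B\y$ directly from the eigenvalue equation of $G$, restricted to the vertices of $H$. First I would write the eigenvalue equation $\la_G \x = A_G \x$ and look at its coordinates indexed by $v \in V(H)$. For such a $v$, the sum $\sum_{u \sim v} x_u$ splits according to whether the neighbor $u$ lies inside $H$ or outside $H$; since $H$ is an \emph{induced} subgraph, the inside neighbors of $v$ are exactly the $H$-neighbors of $v$, so this part equals $(A_H \x\big|_H)_v$. The outside part is by definition $y_v$. Hence, setting $\la = \la_G$, the restriction vector $\x\big|_H \in \mathbb{R}^{V(H)}$ satisfies the linear system $\la \, \x\big|_H = A_H \, \x\big|_H + \y$, i.e. $(\la I - A_H)\, \x\big|_H = \y$.

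Next I would invoke Definition~\ref{def:B} and the remark that $\la = \la_G > \la_H$ (because $H$ is a proper induced subgraph of the connected graph $G$, or more precisely because the top eigenvalue strictly decreases when passing to a proper subgraph of a connected graph). This guarantees that $\la I - A_H$ is invertible with inverse $B = B_H(\la)$, so multiplying $(\la I - A_H)\,\x\big|_H = \y$ on the left by $B$ yields $\x\big|_H = B\y$, which is exactly the claimed formula. The final ``intuitive'' sentence about $B_{u,v}$ being the response in $x_u$ to a unit outside weight at $v$ is then just the entrywise reading of $\x\big|_H = B\y$ together with Proposition~\ref{prop:B_properties}(b) (positivity of the entries of $B$), and requires no further argument.

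There is no serious obstacle here; this is a routine computation. The only point that deserves a moment of care is the claim $\la_G > \la_H$, which is needed for $B$ to be well-defined: this is the standard fact that the spectral radius of a connected graph strictly exceeds that of any proper subgraph (and $H$ must be proper if it supports a nontrivial system, but even if $H = G$ the statement is vacuous as then $\y = 0$). Since the excerpt already assumes $\la > \la_H$ in the setup of the resolvent matrix and uses this throughout, I would simply cite that. The other thing to state clearly is that the decomposition of $N_G(v)$ into inside and outside neighbors is disjoint and exhaustive, so no neighbor is double-counted or omitted — this is where the ``induced'' hypothesis is used, and it is worth one sentence in the write-up.
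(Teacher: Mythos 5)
Your proposal is correct and follows essentially the same route as the paper: write the eigenvalue equation at each $v \in V(H)$, split the neighbor sum into the inside part $(A_H \x\big|_H)_v$ (using that $H$ is induced) and the outside part $y_v$, and invert $\la I - A_H$ via $B$. Your extra remark about $\la_G > \la_H$ (and the degenerate case $H=G$) is a reasonable point of care but matches what the paper disposes of in a footnote.
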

\begin{proof}
The eigenvalue equation at a given $v \in V(H)$ gives 
\begin{equation} \label{eq:ev_GH}
\la x_v = \sum_{u \in N_H(v)} x_u 
+ \sum_{w \in N_G(v) \setminus V(H)} x_w
= \big( A_H \x\big|_{H} \big)_v + y_v .
\end{equation}
It follows that 
\[ \la \x\big|_{H} = A_H \x\big|_{H} + \y 
\text{, and thus } 
\x\big|_{H} = (\la I - A_H)^{-1} \y = B \y .\]
\end{proof}

\section{Graphs with largest eigenvalue at most 2}
\label{sec:lambda_at_most_2}

Graphs with largest eigenvalue at most $2$ are completely characterized, so we only need to compute $\Gamma_G$ for the graphs in the characterization.

\begin{Thm}[Smith \cite{smith1970some}; Lemmens and Seidel \cite{lemmens1973equiangular}] Connected graphs with $\la_G \leq 2$ consist of four families and six sporadic graphs as follows.
\begin{enumerate}[(i)]
\item the path graphs $P_n$
\item the graphs $D_n=S_3+P_{n-3} = \, \leftfork \cdots \Pthree$
\item the cycle graphs $C_n$
\item the ``bi-fork'' graphs $\hat{D}_n = \, \leftfork \cdots \rightfork$ including $S_5 = \, \SSSSS$
\item the following six sporadic graphs:
\begin{center}
\begin{tabular}{lll}
$E_6 = \, \Esix$  & $E_7 = \, \Eseven$ & $E_8 = \, \Eeight$ \\
$\hat{E}_6 = \, \Ehatsix$  & $\hat{E}_7 = \, \Ehatseven$ & $\hat{E}_8 = \, \Ehateight$
\end{tabular}
\end{center}
\end{enumerate}
Moreover, for the graphs $C_n,\hat{D}_n, \hat{E}_6, \hat{E}_7, \hat{E}_8$ the largest eigenvalue is exactly $2$, and any connected $G$ with $\la_G \geq 2$ is known to contain at least one of them as a subgraph.
\end{Thm}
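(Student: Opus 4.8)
The plan is to derive this as the classical $ADE$/affine $ADE$ classification, via a forbidden-subgraph characterization. (One could alternatively read it off from the classification of positive (semi)definite generalized Cartan matrices, but I would argue it elementarily.) Write $\mathcal{A}$ for the ``affine'' graphs in items (iii)--(v): the cycles $C_n$, the bi-forks $\hat{D}_n$ (with $S_5 = \hat{D}_4$), and $\hat{E}_6, \hat{E}_7, \hat{E}_8$; write $\mathcal{F}$ for the ``finite'' graphs $P_n$, $D_n$, $E_6$, $E_7$, $E_8$. The only non-elementary input I would use is the Perron--Frobenius monotonicity fact: $\la_H \le \la_G$ whenever $H \subseteq G$ are connected, with strict inequality when $H \neq G$.

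First I would check that $\la_F = 2$ for every $F \in \mathcal{A}$. Each such $F$ carries an explicit positive integer vertex labelling $z$ (its Coxeter/affine mark vector) with $A_F z = 2z$: the all-ones vector for $C_n$; twos along the spine and ones on the four pendant leaves for $\hat{D}_n$; and the classical marks (for instance $1,2,3,2,1$ with a $2$ on the short branch for $\hat{E}_6$, and their analogues for $\hat{E}_7,\hat{E}_8$). Verifying $A_F z = 2z$ is a vertex-by-vertex check, and since $z>0$ and $F$ is connected, Perron--Frobenius forces $\la_F = 2$. Next, every $F \in \mathcal{F}$ is a proper subgraph of a member of $\mathcal{A}$ (for instance $P_n \subset C_{n+1}$, $D_n \subset \hat{D}_n$, $E_6 \subset \hat{E}_6$, $E_7 \subset \hat{E}_7$, $E_8 \subset \hat{E}_8$, in each case by deleting one vertex), so $\la_F < 2$ by strict monotonicity. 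This already establishes the ``Moreover'' clause about which of the listed graphs attain $\la = 2$.

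The core is a purely combinatorial claim: \emph{a connected graph $G$ containing no member of $\mathcal{A}$ as a subgraph must lie in $\mathcal{F}$.} I would argue in stages. Since $C_n \in \mathcal{A}$, $G$ is a tree. Since $S_5 \in \mathcal{A}$, every vertex of $G$ has degree $\le 3$. If $G$ had two vertices $u,v$ of degree $3$, then the $u$--$v$ path together with two further edges at each of $u$ and $v$ would span some $\hat{D}_m$ (the branches hanging off distinct edges are vertex-disjoint because $G$ is a tree), so $G$ has at most one vertex of degree $3$. If it has none, $G = P_n$. Otherwise $G$ is a ``spider'': three pendant paths of lengths $1 \le a \le b \le c$ attached to one branch vertex, which I will write $T(a,b,c)$. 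Avoiding $\hat{E}_6 = T(2,2,2)$ forces $a = 1$; then avoiding $\hat{E}_7 = T(1,3,3)$ forces $b \le 2$; and then either $b=1$, so $G = T(1,1,c)$ is some $D_m$, or $b=2$, in which case avoiding $\hat{E}_8 = T(1,2,5)$ forces $c \le 4$, so $G = T(1,2,c)$ is one of $E_6 = T(1,2,2)$, $E_7 = T(1,2,3)$, $E_8 = T(1,2,4)$.

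Finally I would assemble the statement. If $G$ is connected with $\la_G \le 2$ and contains some $F \in \mathcal{A}$, then $2 \ge \la_G \ge \la_F = 2$ forces $\la_G = 2$ and, by strict monotonicity, $G = F$. If $G$ contains no member of $\mathcal{A}$, then $G \in \mathcal{F}$ by the combinatorial claim, hence $\la_G < 2$. Together these show that $\la_G \le 2$ implies $G \in \mathcal{A} \cup \mathcal{F}$, and (contrapositive of the combinatorial claim, using $\la < 2$ on $\mathcal{F}$) that $\la_G \ge 2$ forces $G$ to contain a member of $\mathcal{A}$. I expect the main obstacle to be the combinatorial claim: no single step is deep, but one must match each forbidden configuration to the correct member of $\mathcal{A}$ with the right parameters, and handle the small cases carefully (the degree bound coming from $S_5$, the two-branch-vertex argument producing $\hat{D}_m$, and the truncation of a long spider leg needed to recover $\hat{E}_8$).
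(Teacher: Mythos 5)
Your proposal is correct, but note that the paper does not prove this theorem at all: it is quoted as a classical result with citations to Smith and to Lemmens--Seidel, and the paper's later arguments only use the statement (to compute $\Gamma_G$ for the listed graphs in Theorem~\ref{lambda2gamma}). What you have written is essentially a reconstruction of the standard $ADE$/affine-$ADE$ proof that those references contain: exhibit the positive affine mark vector to get $\la_F=2$ for the graphs in your family $\mathcal{A}$, use strict Perron--Frobenius monotonicity for proper subgraphs of connected graphs, and reduce the forbidden-subgraph analysis to spiders $T(a,b,c)$, which is exactly the right skeleton and whose case analysis ($a=1$ from $\hat{E}_6=T(2,2,2)$, $b\le 2$ from $\hat{E}_7=T(1,3,3)$, $c\le 4$ from $\hat{E}_8=T(1,2,5)$) is carried out correctly, including the tree argument that two degree-$3$ vertices force a bi-fork and that $S_5$ caps the maximum degree at $3$. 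Two cosmetic slips: with the paper's conventions ($D_n$ and $\hat{D}_n$ both have $n$ vertices) the correct containment is $D_n\subset\hat{D}_{n+1}$, not $D_n\subset\hat{D}_n$; and your parenthetical mark vector for $\hat{E}_6$ omits the $1$ at the tip of the short branch (the full marks are $1,2,3,2,1$ on the long row and $2,1$ on the branch). Neither affects the argument, and the final assembly (containing some $F\in\mathcal{A}$ plus $\la_G\le 2$ forces $G=F$; otherwise $G\in\mathcal{F}$ with $\la_G<2$) correctly yields both the classification and the ``moreover'' clause.
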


Next we compute $\Gamma_G$ for each of the graphs above.

\begin{Thm} \label{lambda2gamma}
We have 
\begin{enumerate}[(i)]
\item $\displaystyle \Gamma_{P_n}=\frac{2}{n+1}\left(\frac{\sin\left(\frac{\pi}{n+1}\right)}{1-\cos\left(\frac{\pi}{n+1}\right)}\right)^2$;
\item $\displaystyle \Gamma_{D_n}=\frac{1}{2(n-1)}\left(1+\frac{\sin\left(\frac{\pi}{2(n-1)}\right)}{1-\cos\left(\frac{\pi}{2(n-1)}\right)}\right)^2$;
\item $\displaystyle \Gamma_{C_n}=n$;
\item $\displaystyle \Gamma_{\hat{D}_n}=\frac{(n-2)^2}{n-3}$;
\item $\displaystyle \Gamma_{E_6}\approx 5.293,\ \Gamma_{E_7}\approx 6.043,\ \Gamma_{E_8}\approx 6.781,\ \Gamma_{\hat{E}_6}=6,\  \Gamma_{\hat{E}_7}=6.75,\ \Gamma_{\hat{E}_8}=7.5$.
\end{enumerate}
It is easy to see that $\Gamma_{P_n}$, $\Gamma_{D_n}$, $\Gamma_{C_n}$, $\Gamma_{\hat{D}_n}$ are all monotone increasing in $n$ and has linear growth (with constants $8/\pi^2$, $8/\pi^2$, $1$, $1$, respectively). For $n \geq 7$, $\Gamma_{D_n}$ is the smallest of the four with 
\[  \Gamma_{D_7} \approx 6.157; \;
\Gamma_{D_8} \approx 6.966; \;
\Gamma_{D_9} \approx 7.775; \;
\Gamma_{D_{10}} \approx 8.584; \;
\Gamma_{D_{11}} \approx 9.393 .\]
\end{Thm}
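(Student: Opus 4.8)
The plan is to handle the four infinite families one at a time, exploiting that each of these graphs has an unusually structured principal eigenvector, and then to dispose of the six sporadic graphs by a direct finite computation. For the cycle $C_n$ there is nothing to do: it is $2$-regular, so $\x$ is constant and $\Gamma_{C_n}=n$ by the equality case of Cauchy--Schwarz. For the path $P_n$ I would invoke the classical spectral description: the eigenvalues are $2\cos\big(\tfrac{k\pi}{n+1}\big)$, $k=1,\dots,n$, and the principal eigenvector (the $k=1$ one) has entries $x_j=\sin\big(\tfrac{j\pi}{n+1}\big)$. From here it is pure trigonometry: $\sum_{j=1}^n\cos\big(\tfrac{2j\pi}{n+1}\big)=-1$ (real part of a full cycle of roots of unity) gives $\|\x\|_2^2=\tfrac{n+1}{2}$, while the closed form for a sum of sines together with a half-angle identity gives $\|\x\|_1=\cot\big(\tfrac{\pi}{2(n+1)}\big)=\tfrac{\sin(\pi/(n+1))}{1-\cos(\pi/(n+1))}$; dividing yields the stated formula for $\Gamma_{P_n}$.

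The graph $D_n=S_3+P_{n-3}$ is the one requiring real work. Its branch vertex $c$ carries two pendant leaves and one pendant path on $n-3$ vertices; the order-$2$ automorphism swapping the two leaves fixes $\x$, so those leaves have equal weight. Writing $\la_{D_n}=2\cos\theta$ with $\theta\in(0,\pi/2)$, I would use the leaf equations together with the standard path-tail ansatz $x_{p_j}\propto\sin\big((n-2-j)\theta\big)$ along the pendant path — this simultaneously encodes the leaf condition at the far end and is automatically consistent at the junction with $x_c\propto\sin\big((n-2)\theta\big)$. Substituting everything into the eigenvalue equation at $c$ and simplifying collapses it, after elementary identities, to $\cos\big((n-1)\theta\big)=0$; the principal eigenvalue corresponds to the smallest positive root, $\theta=\tfrac{\pi}{2(n-1)}$, so that $(n-1)\theta=\tfrac{\pi}{2}$. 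With that relation in hand every weight simplifies: the two leaves get weight $\tfrac12$ (up to a common scalar), and $c,p_1,\dots,p_{n-3}$ get $\sin(k\theta)$ for $k=n-2,\dots,1$. Summing sines and squares of sines over this chain (using again that $(n-1)\theta=\tfrac{\pi}{2}$ forces the relevant cosine sums to vanish) gives $\|\x\|_1$ proportional to $1+\tfrac{\sin\theta}{1-\cos\theta}$ and $\|\x\|_2^2$ proportional to $n-1$, whence $\Gamma_{D_n}=\tfrac{1}{2(n-1)}\big(1+\tfrac{\sin\theta}{1-\cos\theta}\big)^2$ with $\theta=\tfrac{\pi}{2(n-1)}$.

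For the bi-fork graphs $\hat D_n$ (including $S_5$) and for $\hat E_6,\hat E_7,\hat E_8$ the largest eigenvalue is exactly $2$, and there the principal eigenvector is the integer \emph{mark vector} of the corresponding affine Dynkin diagram. For $\hat D_n$ this means the four pendant leaves carry weight $1$ and the $n-4$ interior vertices carry weight $2$ (one verifies the eigenvalue equation at a leaf, a branch vertex, and an interior vertex), giving $\|\x\|_1=2(n-2)$, $\|\x\|_2^2=4(n-3)$, hence $\Gamma_{\hat D_n}=\tfrac{(n-2)^2}{n-3}$. For $\hat E_6,\hat E_7,\hat E_8$ the mark vectors have entries summing to the Coxeter numbers $12,18,30$, and a one-line computation yields $6,\tfrac{27}{4},\tfrac{15}{2}$. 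The three finite graphs $E_6,E_7,E_8$ have $\la<2$ and no nice eigenvector, but they are fixed small graphs: one simply computes the largest root of the (degree $\le 8$) characteristic polynomial and the corresponding eigenvector, and reads off $\Gamma_G$ numerically.

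Finally, the monotonicity, growth rates and the comparison statement all follow from the closed forms. For $C_n$ and $\hat D_n$ monotonicity is immediate ($\tfrac{d}{dn}\tfrac{(n-2)^2}{n-3}=\tfrac{(n-2)(n-4)}{(n-3)^2}>0$). For $P_n$ and $D_n$, substituting $t=\tfrac{\pi}{2(n+1)}$ (resp.\ $t=\tfrac{\pi}{4(n-1)}$) rewrites $\Gamma$ as a constant multiple of $t\cot^2 t$ (resp.\ $t(1+\cot t)^2$), and differentiating shows this is decreasing in $t$ on the relevant interval thanks to $\sin 2t<2t$; since $t$ decreases as $n$ grows, $\Gamma$ increases. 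The expansion $\tfrac{\sin t}{1-\cos t}=\tfrac{2}{t}+O(t)$ then gives the leading coefficients $8/\pi^2,8/\pi^2,1,1$. That $\Gamma_{D_n}$ is the smallest of the four for $n\geq 7$, and the displayed values $\Gamma_{D_7},\dots,\Gamma_{D_{11}}$, are obtained by comparing/evaluating these explicit monotone expressions. The one genuinely delicate point in the whole argument is the $D_n$ case: setting up the path-tail ansatz so that it matches the far-end leaf and the junction at $c$ simultaneously, and then carrying the trigonometric bookkeeping through — everything becomes clean only after observing that $(n-1)\theta=\tfrac{\pi}{2}$ makes all the awkward partial sums collapse. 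The rest is a symmetry observation, a regularity observation, or a finite check.
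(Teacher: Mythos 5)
Your proposal is correct and follows essentially the same route as the paper: both rest on the explicit trigonometric Perron vectors of $P_n$ and $D_n$ (eigenvalue $2\cos\theta$ with $\theta=\tfrac{\pi}{n+1}$, resp.\ $\theta=\tfrac{\pi}{2(n-1)}$, leaves of weight $\tfrac12$) together with elementary sine/cosine summations, while (iii)--(v) are handled by regularity, the integer eigenvectors at eigenvalue $2$, and a finite computation for $E_6,E_7,E_8$. The differences are only presentational: you derive the $D_n$ eigenvector via the path ansatz and the junction equation $\cos\big((n-1)\theta\big)=0$ and you spell out the mark-vector and monotonicity details, which the paper dismisses as well known or straightforward.
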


\begin{proof}
Parts (iii),(iv) and (v) are straightforward to get.

\noindent\textbf{(i)} It is well known (and easy to check) that the vector $\v=(v_1,\ldots,v_n)$ with $v_k=\sin\left(\frac{k\pi}{n+1}\right)$ is an eigenvector of $P_n$ belonging to the largest eigenvalue $2\cos\left(\frac{\pi}{n+1}\right)$. Then the claim follows from the identities
\begin{equation*}
\sum_{k=1}^n\sin\left(\frac{k\pi}{n+1}\right) 
= \frac{\sin\left(\frac{\pi}{n+1}\right)}{1-\cos\left(\frac{\pi}{n+1}\right)} 
\quad \text{and} \quad
\sum_{k=1}^n\sin^2\left(\frac{k\pi}{n+1}\right) 
= \frac{n+1}{2} ,
\end{equation*}
which can be proved using $\sin(x)=\frac{e^{ix}-e^{-ix}}{2i}$ and summing the resulting geometric series:
\begin{multline*}
\sum_{k=1}^n\sin\left(\frac{k\pi}{n+1}\right)=\sum_{k=0}^n\sin\left(\frac{k\pi}{n+1}\right)
=\sum_{k=0}^n\frac{1}{2i}\left(e^{\frac{k\pi i}{n+1}}-e^{\frac{-k\pi i}{n+1}}\right)\\
=\frac{1}{2i}\left(\frac{1-(-1)}{1-e^{\frac{\pi i}{n+1}}}-\frac{1-(-1)}{1-e^{-\frac{\pi i}{n+1}}}\right)
=\frac{2}{2i} \, \frac{(1-e^{\frac{-\pi i}{n+1}})-(1-e^{\frac{\pi i}{n+1}})}{2-e^{\frac{\pi i}{n+1}}-e^{-\frac{\pi i}{n+1}}}
=\frac{2\sin\left(\frac{\pi}{n+1}\right)}{2-2\cos\left(\frac{\pi}{n+1}\right)} ;
\end{multline*}
\begin{multline*}
\sum_{k=1}^n\sin^2\left(\frac{k\pi}{n+1}\right)
=\sum_{k=0}^n\sin^2\left(\frac{k\pi}{n+1}\right)
=\sum_{k=0}^n-\frac{1}{4}\left(e^{\frac{k\pi i}{n+1}}-e^{\frac{-k\pi i}{n+1}}\right)^2\\
=-\frac{1}{4}\sum_{k=0}^n\left(e^{\frac{2k\pi i}{n+1}}-2+e^{\frac{-2k\pi i}{n+1}}\right)
=-\frac{1}{4}(-2)(n+1)=\frac{n+1}{2}.
\end{multline*}

\noindent\textbf{(ii)} It is again well known (and again easy to check) that the vector $\v=(v_1,\ldots,v_n)$ with $v_k=\sin\left(\frac{k\pi}{2(n-1)}\right)$ for $1\leq k\leq n-2$ and $v_{n-1}=v_n=\frac{1}{2}$ is an eigenvector of $D_n$ belonging to the largest eigenvalue $2\cos\left(\frac{\pi}{2(n-1)}\right)$.
The claim then follows from the following identities, which can be proved similarly to (i): 
\begin{align*}
\sum_{k=1}^{n-2}\sin\left(\frac{k\pi}{2(n-1)}\right) 
&= \frac{\cos\left(\frac{\pi}{2(n-1)}\right)+\sin\left(\frac{\pi}{2(n-1)}\right)-1}{2\left(1-\sin\left(\frac{\pi}{2(n-1)}\right)\right)}; \\
\sum_{k=1}^{n-2}\sin^2\left(\frac{k\pi}{2(n-1)}\right) 
&= \frac{n}{2}-1.
\end{align*}
\end{proof}

\section{Heavy subgraphs and the degree of the master vertex}
\label{sec:heavy_subgraphs_degree}

In this section we introduce the concept of heavy subgraphs and make some observations about the degree of the master vertex.

\begin{Def}
Let $G$ be a graph with Perron vector $\x$ and $H$ an induced subgraph. We call $H$ $\gamma$-\emph{heavy} if it holds for all $v\notin V(H)$ that
\[ x_v\leq \gamma \sum_{u\in V(H)}x_u. \]
Furthermore, if the master $o$ lies in $H$, then we call $H$ $\gat$-\emph{heavy with respect to the master} if it holds for all $v\notin V(H)$ that
\begin{equation} \label{eq:heavy_wrt_master}
x_v \leq \gat x_o .
\end{equation}
For the conjectures in this paper it will be sufficient to use this second notion with $\gat=1$, in which case \eqref{eq:heavy_wrt_master} automatically holds. If stronger bounds are needed, one can often use some $\gat$ less than $1$; see the approach suggested in Section~\ref{sec:better_gamma}.
\end{Def}
We start with the following simple observation.
\begin{Prop} \label{prop:heavy_obs}
If $H \supseteq N_G[o]$, that is, if $H$ contains the master $o$ and its neighbors, then $H$ is $\frac{1}{\la_G+1}$-heavy.
\end{Prop}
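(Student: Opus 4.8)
The plan is to bound $x_v$ for an arbitrary outside vertex $v \notin V(H)$ directly against the total weight $\sum_{u \in V(H)} x_u$, using nothing but the eigenvalue equation at the master $o$ together with the fact that $x_o$ dominates every coordinate. First I would observe that, since $H \supseteq N_G[o]$, the closed neighborhood $N_G[o] = \{o\} \cup N_G(o)$ is entirely contained in $V(H)$. Writing up the eigenvalue equation at $o$ gives $\la_G x_o = \sum_{u \in N_G(o)} x_u$, and hence
\[
\sum_{u \in V(H)} x_u \;\geq\; x_o + \sum_{u \in N_G(o)} x_u \;=\; x_o + \la_G x_o \;=\; (\la_G + 1)\, x_o .
\]
This is exactly the estimate already used in the proof of Lemma \ref{Gamma-lambda bound}(b), so it may simply be quoted.

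Next I would combine this with the defining property of the master: for every $v \notin V(H)$ (indeed for every vertex of $G$) we have $x_v \leq x_o$. Chaining the two inequalities yields
\[
x_v \;\leq\; x_o \;\leq\; \frac{1}{\la_G + 1} \sum_{u \in V(H)} x_u ,
\]
which is precisely the statement that $H$ is $\frac{1}{\la_G+1}$-heavy according to the definition. That finishes the argument.

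The proof is genuinely short and there is no real obstacle; the only thing to be careful about is making sure the inequality $\sum_{u \in V(H)} x_u \geq x_o + \sum_{u \in N_G(o)} x_u$ is valid, i.e.\ that the sum on the right double-counts nothing — this is fine because $o \notin N_G(o)$ in a simple graph, so $\{o\}$ and $N_G(o)$ are disjoint subsets of $V(H)$, and the remaining vertices of $H$ contribute nonnegatively. One should also note that $\la_G > \la_H \geq 0$, so $\la_G + 1 > 0$ and the division is legitimate. No positivity subtleties beyond the Frobenius--Perron positivity of $\x$ (already in force throughout) are needed.
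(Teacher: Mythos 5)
Your proposal is correct and follows exactly the paper's argument: the eigenvalue equation at $o$ gives $\sum_{u\in V(H)} x_u \geq (\la_G+1)x_o$, and combining with $x_v \leq x_o$ for all $v$ yields the heaviness bound. The extra remarks on disjointness of $\{o\}$ and $N_G(o)$ and positivity are fine but not needed beyond what the paper already assumes.
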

\begin{proof}
Using the eigenvalue equation at $o$ we get 
\[ \sum_{u\in V(H)} x_u \geq x_o + \sum_{u \in N_G(o)} x_u 
= (\la_G+1) x_o .\]
Since $o$ has the largest weight in $G$, it follows that for all $v \in V(G)$ 
\[ x_v \leq x_o \leq  \frac{1}{\la_G+1}  \sum_{u\in V(H)} x_u ,\]
and the proof is complete.
\end{proof}
\begin{Lemma} \label{subgraph-lower-bound} 
(a) Let $\beta>1$. Suppose that for some induced subgraph $H$ of $G$ we have
$\Gamma(\x\big|_H)\geq \beta$ and $H$ is 
 $\frac{2}{\beta-1}$-heavy, then $\Gamma_G\geq \beta$.\\
(b) For any $H$ containing $N_G[o]$ we have $\Gamma_G\geq \min(\Gamma(\x\big|_H),2\la_G+3).$\\
(c) For any $H$ containing $N_G[o]$ and a vertex $v$ of distance $2$ from $o$ we have
\[\Gamma_G\geq \min\left(\Gamma(\x\big|_H),2\la_G+\frac{2}{\la_G^2-1}+3\right).\]
\end{Lemma}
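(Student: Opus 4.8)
The plan is to derive parts (b) and (c) from part (a), so the real content is the elementary inequality (a); there is no serious obstacle, but one point does require a moment of thought, which I flag below.

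\emph{Part (a).} Write $S := \sum_{u\in V(H)} x_u = \|\x\big|_H\|_1$, $T := \|\x\big|_H\|_2^2$, and set $S' := \sum_{v\notin V(H)} x_v$ and $M := \max_{v\notin V(H)} x_v$ (with $M:=0$ if $V(H)=V(G)$). I would start from the crude estimate
\[ \|\x\|_2^2 \;=\; T + \sum_{v\notin V(H)} x_v^2 \;\le\; T + MS' , \qquad \|\x\|_1 = S+S'. \]
From $\Gamma(\x\big|_H)\ge\beta$ we get $\beta T\le S^2$, and the $\tfrac{2}{\beta-1}$-heaviness of $H$ gives $M\le\tfrac{2}{\beta-1}S$, i.e.\ $(\beta-1)M\le 2S$. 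The point that requires thought is that one must \emph{also} use the trivial bound $M\le S'$ (a maximum of nonnegative numbers is at most their sum): combining the two,
\[ \beta M \;=\; M + (\beta-1)M \;\le\; S' + 2S , \]
hence $\beta\|\x\|_2^2 \le \beta T + \beta M S' \le S^2 + (S'+2S)S' = (S+S')^2 = \|\x\|_1^2$, which is exactly $\Gamma_G\ge\beta$. The reason both bounds on $M$ are needed is that estimating $\sum_{v\notin V(H)}x_v^2$ by heaviness alone (by $\tfrac{2}{\beta-1}SS'$) is too weak when the total outside weight $S'$ is small; carrying the factor $M$ and using $M\le\min(\tfrac{2}{\beta-1}S,\,S')$ fixes this.

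\emph{Part (b).} By Proposition~\ref{prop:heavy_obs}, every $H\supseteq N_G[o]$ is $\tfrac{1}{\la_G+1}$-heavy. I would put $\beta := \min\bigl(\Gamma(\x\big|_H),\,2\la_G+3\bigr)$. Then $\Gamma(\x\big|_H)\ge\beta$ by definition, and since $\beta\le 2\la_G+3$ we have $\tfrac{2}{\beta-1}\ge\tfrac{1}{\la_G+1}$, so (heaviness being monotone in its constant) $H$ is also $\tfrac{2}{\beta-1}$-heavy. As long as $\beta>1$ — which fails only in the trivial case $G=K_1$ — part (a) gives $\Gamma_G\ge\beta$, as claimed.

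\emph{Part (c).} Here the plan is to improve the heaviness constant by using the distance-$2$ vertex $v\in V(H)$. Fix a common neighbour $u$ of $o$ and $v$. The eigenvalue equation at $v$ gives $\la_G x_v\ge x_u$, and the one at $u$ gives $\la_G x_u\ge x_o+x_v\ge x_o + x_u/\la_G$ (using $o\ne v$ are both neighbours of $u$); since $G\supseteq P_3$ forces $\la_G\ge\sqrt2>1$, this rearranges to $x_u\ge\tfrac{\la_G}{\la_G^2-1}x_o$, hence $x_v\ge\tfrac{1}{\la_G^2-1}x_o$. As $N_G[o]\cup\{v\}\subseteq V(H)$ with $v\notin N_G[o]$, the eigenvalue equation at $o$ then gives
\[ \sum_{u\in V(H)} x_u \;\ge\; x_o + \!\!\sum_{u\in N_G(o)}\!\! x_u + x_v \;=\; (\la_G+1)x_o + x_v \;\ge\; \Bigl(\la_G+1+\tfrac{1}{\la_G^2-1}\Bigr)x_o, \]
so, since $x_w\le x_o$ for every outside $w$, the subgraph $H$ is $\gamma$-heavy with $\gamma=\bigl(\la_G+1+\tfrac{1}{\la_G^2-1}\bigr)^{-1}$. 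Finally I would take $\beta:=\min\bigl(\Gamma(\x\big|_H),\,2\la_G+\tfrac{2}{\la_G^2-1}+3\bigr)$; because $1+\tfrac{2}{\gamma}=2\la_G+\tfrac{2}{\la_G^2-1}+3$, we get $\tfrac{2}{\beta-1}\ge\gamma$, so $H$ is $\tfrac{2}{\beta-1}$-heavy, and part (a) closes the argument exactly as in (b).
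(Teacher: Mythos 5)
Your proposal is correct and follows essentially the same route as the paper: part (a) is the same elementary manipulation (your bounds $M\le\tfrac{2}{\beta-1}S$ and $M\le S'$ are just a repackaging of the paper's $\sum_{v\notin V(H)}x_v^2\le\tfrac{2}{\beta-1}S\sum_{v\notin V(H)}x_v$ together with $(\sum_{v\notin V(H)}x_v)^2\ge\sum_{v\notin V(H)}x_v^2$), and parts (b), (c) are deduced from (a) via Proposition~\ref{prop:heavy_obs} and the same distance-two estimate $x_v\ge\tfrac{1}{\la_G^2-1}x_o$, exactly as in the paper.
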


\begin{proof}
To see (a), we set 
\[ S = \sum_{u \in V(H)} x_u ; \,
T = \sum_{u \in V(H)} x_u^2 ; \quad
\St = \sum_{v \notin V(H)} x_v ; \,
\Tt = \sum_{v \notin V(H)} x_v^2 .\]
Since $H$ is $\frac{2}{\beta-1}$-heavy, for every $v \notin V(H)$ we have 
\[ x_v \leq \frac{2}{\beta-1} S 
\text{, implying that }
x_v^2 \leq \frac{2}{\beta-1} S x_v .\]
Summing these inequalities for all $v \notin V(H)$ yields 
\[ \Tt \leq \frac{2}{\beta-1} S \St .\]
Using this and the trivial bound $\St^2 \geq \Tt$ we get 
\begin{align*}
(S+\St)^2-\be(T+\Tt) &= (S^2-\be T) + 2S\St + \St^2 - \be \Tt \\
&\geq (S^2-\be T) +  (\be-1) \Tt + \Tt - \be \Tt 
= S^2-\be T .
\end{align*}
The right-hand side is nonnegative since $\Gamma(\x\big|_H)\geq \beta$. Therefore the left-hand side is nonnegative as well, which, in turn, is equivalent to $\Gamma_G\geq \beta$.

We obtain (b) as an immediate consequence of (a) and Proposition \ref{prop:heavy_obs}.

For (c) we note that $x_v\geq \frac{1}{\la_G^2-1}x_o$ for any vertex $v$ with $\dist(o,v)=2$. Indeed, if $w$ is a common neighbor of $o$ and $v$, then the eigenvalue equations imply that $\la_G x_w\geq x_v+x_o$ and $\la_G x_v\geq x_w$. Then 
$\la_G^2 x_v\geq \la_G x_w\geq x_v+x_o$, thus $x_v\geq \frac{1}{\la_G^2-1}x_o$. Hence 
\[\sum_{u\in V(H)}x_u\geq \left(1+\la_G+\frac{1}{\la_G^2-1}\right)x_o.\]
This shows that $H$ is $\frac{2}{\beta-1}$-heavy for $\beta=2\la_G+\frac{2}{\la_G^2-1}+3$, and hence it is $\frac{2}{\beta'-1}$-heavy for $\beta'=\min\big(\Gamma(\x\big|_H),\beta \big)$ as well, and the proof is complete by (a). 
\end{proof}

See Remark~\ref{rem:Gaxt_to_GaG} for how this lemma will be used in the proofs of the conjectures.

\subsection{Degree of the master vertex.}
Our next goal is to give a lower bound on $\Gamma_G$ in terms of the degree of the master vertex. Note that combining Lemma~\ref{Gamma-lambda bound}(b) and Lemma~\ref{prop:la&deg} already gives such a bound:
\[\deg(o)\leq \la_G^2\leq (\Gamma_G-1)^2 .\]
We need a better bound, however, because we want to conclude that if $\Gamma_G\leq \bestar$, then $\deg(o)\leq 5$. (See the beginning of Section~\ref{subsec:comp_one} for how this will be used in the proof of the first conjecture.)

We will prove the following (significantly stronger) bound: if $\deg(o)=d$, then 
\[ \Gamma_G\geq \min(\beta_d,2\sqrt{d}+3) ,\]
where $\beta_d$ is defined as
\[\beta_d=\min_{\la \in [\sqrt{d},d]}\frac{\la}{1-\frac{d+1}{(\la+1)^2}}.\]
The approximate values for small $d$ are the following:
\medskip

\begin{center}
\begin{tabular}{|l||c|c|c|c|c|c|c|c|c|c|c|} \hline
$d$ & $3$ & $4$ & $5$ & $6$ & $7$ & $8$ & $9$ & $10$ & $11$ & $12$ \\ \hline
$\beta_d \approx$ & $3.596$ & $4.223$ & $4.788$ & $5.305$ & $5.785$ & $6.235$ & $6.660$ & $7.064$ & $7.450$ & $7.820$ \\ \hline
\end{tabular}
\end{center}
\bigskip

\begin{Lemma} The sequence $\beta_d$ is monotone increasing in $d$. Furthermore, if the function $F_d(\la)=\frac{\la}{1-\frac{d+1}{(1+\la)^2}}$
achieves its minimum at $\la_d$, then
$\beta_d=\frac{3\la_d+1}{2}.$
\end{Lemma}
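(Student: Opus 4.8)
The plan is to reduce the whole statement to an analysis of the critical point of $F_d$ on the relevant interval. First I would clear the denominator and write
\[ F_d(\la) = \frac{\la(1+\la)^2}{(1+\la)^2-(d+1)} , \]
which is legitimate on $[\sqrt d, d]$ because there $(1+\la)^2 \ge (1+\sqrt d)^2 = d+1+2\sqrt d > d+1$, so the denominator is positive throughout. Differentiating this quotient, the numerator of $F_d'(\la)$ simplifies to $(1+\la)\,\phi_d(\la)$ where $\phi_d(\la):=(1+\la)^3-(1+3\la)(d+1)$; hence on $[\sqrt d,d]$ the sign of $F_d'$ equals the sign of $\phi_d$. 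Since $\phi_d'(\la)=3\bigl[(1+\la)^2-(d+1)\bigr]\ge 6\sqrt d>0$ on $[\sqrt d,d]$, the function $\phi_d$ is strictly increasing there, so it has at most one zero in the interval.

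Next I would pin that zero down by checking the endpoints. A short computation (write $t=\sqrt d$) gives $\phi_d(\sqrt d)=2d\bigl(1-\sqrt d\bigr)\le 0$ and $\phi_d(d)=(d+1)d(d-1)\ge 0$. Combined with strict monotonicity of $\phi_d$, this shows $\phi_d$ has a unique root $\la_d\in[\sqrt d,d]$, that $F_d$ is decreasing on $[\sqrt d,\la_d]$ and increasing on $[\la_d,d]$, and therefore $\beta_d=F_d(\la_d)$. To evaluate it, use $\phi_d(\la_d)=0$, i.e. $d+1=\frac{(1+\la_d)^3}{1+3\la_d}$, and substitute into the denominator:
\[ (1+\la_d)^2-(d+1) = (1+\la_d)^2\Bigl(1-\tfrac{1+\la_d}{1+3\la_d}\Bigr) = \frac{2\la_d(1+\la_d)^2}{1+3\la_d} , \]
so that $F_d(\la_d)=\dfrac{\la_d(1+\la_d)^2}{2\la_d(1+\la_d)^2/(1+3\la_d)}=\dfrac{1+3\la_d}{2}=\dfrac{3\la_d+1}{2}$, which is the asserted formula for $\beta_d$.

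For monotonicity in $d$ I would rewrite the defining relation as $\psi(\la_d)=d+1$, where $\psi(\la):=\frac{(1+\la)^3}{1+3\la}$. A direct computation gives $\psi'(\la)=\frac{6\la(1+\la)^2}{(1+3\la)^2}>0$ for all $\la>0$, so $\psi$ is strictly increasing; since $\la_d\ge\sqrt d>0$, the relation $\psi(\la_d)=d+1$ forces $\la_d$ to be strictly increasing in $d$, and hence $\beta_d=\frac{3\la_d+1}{2}$ is strictly increasing in $d$ as well. (Equivalently, one may note that $F_d(\la)$ is pointwise strictly increasing in $d$ on the common admissible range.)

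The step I expect to require the most care is confirming that the unconstrained minimizer of $F_d$ actually lies inside the prescribed interval $[\sqrt d,d]$ — that is, the endpoint sign computations $\phi_d(\sqrt d)\le 0\le\phi_d(d)$ — since without this the minimum could a priori be attained at an endpoint and the clean identity $\beta_d=(3\la_d+1)/2$ would fail. Everything else is routine once $F_d$ has been put in the quotient form above.
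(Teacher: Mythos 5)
Your proof is correct and follows essentially the same route as the paper: locate the critical point via the equation $(1+\la)^3=(d+1)(3\la+1)$, verify by endpoint sign checks that it lies in $[\sqrt d,d]$, substitute to get $\beta_d=\frac{3\la_d+1}{2}$, and deduce monotonicity of $\beta_d$ from the strict monotonicity of $\psi(\la)=\frac{(1+\la)^3}{1+3\la}$ (the paper's $f$). The only cosmetic difference is that you clear denominators and track the sign of the polynomial $\phi_d$ directly, while the paper keeps $F_d$ in its original form and argues via monotonicity of $f$; the content is the same.
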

\begin{proof}
The derivative of $F_d(\la)$ is 
\begin{equation} \label{eq:Fd_der}
\frac{d}{d\la}F_d(\la)=\frac{(1+\la)^3-(d+1)(3\la+1)}{(1+\la)^2\left(1-\frac{d+1}{(1+\la)^2}\right)^2}.
\end{equation}
Here the denominator is always positive. The function $f(\la):=\frac{(1+\la)^3}{3\la+1}$ is strictly monotone increasing if $\la\geq 0$ as $f'(\la)=\frac{6(\la+1)^2\la}{(3\la+1)^2}$. Furthermore, it is easy to see that $f(\sqrt{d})<d+1<f(d)$. It follows that there is a unique $\la_d \in (\sqrt{d},d)$ for which $f(\la)=d+1$, that is, $(1+\la_d)^3-(d+1)(3\la_d+1)=0$. So the numerator of \eqref{eq:Fd_der} is negative for $\sqrt{d} < \la < \la_d$ and positive for $\la_d < \la < d$, and hence $F_d(\la)$ attains its minimum on $[\sqrt{d},d]$ at $\la_d$. Furthermore,
\[ \beta_d:=F_d(\la_d)=\frac{\la_d}{1-\frac{d+1}{(1+\la_d)^2}}=\frac{\la_d}{1-\frac{d+1}{\frac{(d+1)(3\la_d+1)}{\la_d+1}}}=\frac{\la_d}{1-\frac{\la_d+1}{3\la_d+1}}=\frac{3\la_d+1}{2}. \]
Since $f$ is strictly monotone increasing on $(0,\infty)$, it follows that $\la_d$ is a monotone increasing sequence, and hence so is $\beta_d$.
\end{proof}
\begin{Rem}
It can be shown that
\[\beta_d=\frac{3\sqrt{3}}{2}\sqrt{d}-\frac{3}{2}+\frac{2}{\sqrt{3d}}+O\left(\frac{1}{d}\right).\]
\end{Rem}
\begin{Thm} \label{degree-6}
Let $d=\deg(o)$ and let $H=N_G[o]$. Then 
\[ \Gamma(\x\big|_H)\geq \beta_d \quad \text{and} \quad 
\Gamma_G\geq \min(\beta_d,2\sqrt{d}+3) = 
\begin{cases}
\be_d & \text{if } d \leq 52;\\
2\sqrt{d}+3 & \text{if } d \geq 53.
\end{cases}. \]
\end{Thm}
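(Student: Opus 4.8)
The plan is to carry everything out inside the small graph $H=N_G[o]$, whose vertices are the master $o$ and its $d$ neighbours $u_1,\dots,u_d$, using only the eigenvalue equations at these $d+1$ vertices. Write $\la=\la_G$. The equation at $o$ gives $\sum_{i=1}^d x_{u_i}=\la x_o$, so $\|\x\big|_H\|_1=(\la+1)x_o$. For the denominator I would exploit that the neighbour weights are squeezed into a narrow band: $x_{u_i}\le x_o$ since $o$ is the master, while the equation at $u_i$ gives $\la x_{u_i}\ge x_o$ (as $o\in N_G(u_i)$), hence $x_{u_i}\ge x_o/\la$. Applying the reverse AM--QM inequality (Lemma~\ref{reverse-AM-QM}) with $m=x_o/\la$, $M=x_o$, $k=d$, $S=\la x_o$ yields $\sum_i x_{u_i}^2\le x_o^2(\la^2+\la-d)/\la$, so $\|\x\big|_H\|_2^2\le x_o^2(\la^2+2\la-d)/\la$. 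Dividing, the factor $x_o^2$ cancels and one obtains exactly
\[ \Gamma(\x\big|_H)\ \ge\ \frac{\la(\la+1)^2}{\la^2+2\la-d}\ =\ F_d(\la), \]
the denominator being positive because $\la\ge\sqrt d$ (Lemma~\ref{prop:la&deg}). Since $\la=\la_G\in[\sqrt d,d]$, taking the minimum over that interval gives $\Gamma(\x\big|_H)\ge\beta_d$, which is the first assertion.

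For the bound on $\Gamma_G$ itself I would simply invoke Lemma~\ref{subgraph-lower-bound}(b) with $H=N_G[o]$: it gives $\Gamma_G\ge\min\big(\Gamma(\x\big|_H),\,2\la_G+3\big)\ge\min(\beta_d,\,2\sqrt d+3)$, where the last step uses $\la_G\ge\sqrt d$ once more.

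It then remains to decide which of $\beta_d$ and $2\sqrt d+3$ is the smaller. Here I would use the preceding lemma, which states $\beta_d=(3\la_d+1)/2$, where $\la_d$ is the minimiser of $F_d$ on $[\sqrt d,d]$, characterised by $d+1=(1+\la_d)^3/(3\la_d+1)$ together with the monotonicity of $f(\la):=(1+\la)^3/(3\la+1)$. Writing $t=\sqrt d$, the inequality $\beta_d\le 2\sqrt d+3$ is equivalent to $\la_d\le(4t+5)/3$, hence (applying the increasing function $f$ and using $f(\la_d)=d+1=t^2+1$) to $t^2+1\le f\big((4t+5)/3\big)$, which after clearing denominators becomes the cubic inequality
\[ p(t):=22t^3-111t^2-330t-175\ \le\ 0. \]
A short check shows $p$ has two negative roots and a single positive root $t_0$, with $p(\sqrt{52})<0<p(\sqrt{53})$, so $t_0\in(\sqrt{52},\sqrt{53})$. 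Consequently $\beta_d\le 2\sqrt d+3$ exactly for integers $d\le 52$ and $\beta_d>2\sqrt d+3$ for $d\ge 53$, giving the stated case distinction.

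The only part involving any actual computation is the last one --- pinning down that the crossover lies between $d=52$ and $d=53$ --- and that reduces to evaluating the sign of an explicit cubic at two points and counting its roots. Everything else is a two-line manipulation of the eigenvalue equations combined with Lemmas~\ref{reverse-AM-QM}, \ref{subgraph-lower-bound}(b) and \ref{prop:la&deg}, so I do not expect a genuine obstacle; if anything, the mild care needed is in checking that all denominators ($\la^2+2\la-d$ and $(1+\la)^2-(d+1)$) stay positive on the relevant range, which again follows from $\la\ge\sqrt d$.
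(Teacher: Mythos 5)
Your proof is correct and follows essentially the same route as the paper: the eigenvalue equation at $o$ giving $\|\x\big|_H\|_1=(\la_G+1)x_o$, the squeeze $x_o/\la_G\le x_v\le x_o$, Lemma~\ref{reverse-AM-QM} (you apply it to the $d$ neighbours and add $x_o^2$ separately, the paper to all $d+1$ vertices — the resulting bound $F_d(\la_G)\ge\beta_d$ is identical), and then Lemma~\ref{subgraph-lower-bound}(b) together with $\la_G\ge\sqrt d$ from Lemma~\ref{prop:la&deg}. Your explicit reduction of the $d\le 52$ versus $d\ge 53$ case distinction to the sign of the cubic $22t^3-111t^2-330t-175$ at $t=\sqrt d$ (via $\beta_d=(3\la_d+1)/2$ and the monotonicity of $f$) is correct and slightly more complete than the paper, which states that crossover without proof.
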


\begin{proof}
On the one hand, $\sum_{v\in V(H)}x_v=(\la+1)x_o$ from the eigenvalue equation at $o$. On the other hand, for $v\in V(H)$ we have $x_o\geq x_v\geq \frac{1}{\la}x_o$. (The first inequality follows from the fact that $o$ is the master, while the second one is a simple consequence of the eigenvalue equation at $v$.)

Thus we can apply Lemma~\ref{reverse-AM-QM} to the $k:=d+1$ numbers $x_v$, $v \in V(H)$ with $m=\frac{1}{\la}x_o$, $M=x_o$ and $S=(1+\la)x_o$. We get that
\[\Gamma(\x\big|_H)\geq \frac{((\la+1)x_o)^2}{\left(1+\frac{1}{\la}\right)(\la+1)x_o^2-\frac{k}{\la}x_o^2}=\frac{\la}{1-\frac{d+1}{(1+\la)^2}}\geq \beta_d.\]
Furthermore, by Lemma~\ref{subgraph-lower-bound}(b) we have
\[\Gamma_G\geq \min\left(\Gamma(\x\big|_H),2\la_G+3\right)\geq \min\left(\beta_d,2\sqrt{d}+3\right) \]
using that $\la_G\geq \sqrt{d}$ by Lemma~\ref{prop:la&deg}.
\end{proof}

The rest of this section is not needed for the conjectures. First we include a result that implies Theorem~\ref{degree-6} and provides a stronger bound for many graphs.
\begin{Lemma} \label{many-large-vertices} 
Let $d=\deg(o)$ and $k\geq d+1$. Suppose that $G$ has at least $k$ vertices $v$ (including $o$) with $x_v\geq \frac{x_o}{\la_G}$. Then 
\[ \Gamma(\x\big|_H)\geq \min(\beta_{k-1},2\sqrt{2k}-1) 
\quad \text{and} \quad \Gamma_G\geq  \min(\beta_{k-1},2\sqrt{2k}-1,2\sqrt{d}+3) .\]
\end{Lemma}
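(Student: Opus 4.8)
The plan is to run the proof of Theorem~\ref{degree-6} not on $N_G[o]$ but on a set of $k$ heavy vertices, paying for the fact that this larger set no longer has a prescribed total weight.

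\textbf{Setup and reduction.} Write $\la := \la_G$; by Lemma~\ref{prop:la&deg} we have $1 \le \sqrt d \le \la \le d \le k-1$ (the last inequality since $k \ge d+1$). The eigenvalue equation at a neighbour $w$ of $o$ gives $\la x_w = \sum_{u \in N_G(w)} x_u \ge x_o$, so every vertex of $N_G[o]$ has weight $\ge x_o/\la$; thus $N_G[o]$ is a set of $d+1 \le k$ ``heavy'' vertices, and I can fix a set $U$ of exactly $k$ vertices with $x_v \ge x_o/\la$ such that $N_G[o] \subseteq U$, and put $H := G[U]$ (so $V(H)=U$). Since $H \supseteq N_G[o]$, Lemma~\ref{subgraph-lower-bound}(b) gives $\Ga_G \ge \min\!\big(\Ga(\x\big|_H),\, 2\la+3\big) \ge \min\!\big(\Ga(\x\big|_H),\, 2\sqrt d+3\big)$, so it suffices to bound $\Ga(\x\big|_H)=\Ga(\x\big|_{U})$ from below.

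\textbf{Applying reverse AM--QM.} All $k$ weights $x_v$, $v\in U$, lie in $[x_o/\la,\,x_o]$. With $S:=\sum_{v\in U}x_v$ and $s:=S/x_o$, Lemma~\ref{reverse-AM-QM} yields
\[ \Ga(\x\big|_{U})=\frac{S^2}{\sum_{v\in U}x_v^2}\ \ge\ \frac{s^2}{(1+1/\la)s-k/\la}=\frac{\la s^2}{(\la+1)s-k}=:h(s). \]
An elementary computation shows that on its domain $s>k/(\la+1)$ the function $h$ first decreases, then increases, with minimum $h\!\big(\tfrac{2k}{\la+1}\big)=\tfrac{4k\la}{(\la+1)^2}$. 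Moreover, since $N_G[o]\subseteq U$ and $\sum_{v\in N_G[o]}x_v=(\la+1)x_o$ by the eigenvalue equation at $o$, we have the \emph{a priori} lower bound $s\ge 1+\la$.

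\textbf{Case split at $(\la+1)^2=2k$.} If $(\la+1)^2\ge 2k$, then $1+\la\ge \tfrac{2k}{\la+1}$, so $h$ is already increasing at $s$ and $\Ga(\x\big|_{U})\ge h(1+\la)=\frac{\la}{1-k/(\la+1)^2}=F_{k-1}(\la)$; since $\sqrt{k-1}\le \sqrt{2k}-1\le \la\le k-1$ (the first inequality being, after squaring, $(\sqrt{2k}-2)^2\ge 0$), the definition of $\beta_{k-1}$ gives $F_{k-1}(\la)\ge \beta_{k-1}$. If $(\la+1)^2< 2k$, I would simply use the global minimum of $h$: $\Ga(\x\big|_{U})\ge \tfrac{4k\la}{(\la+1)^2}$, and since $1\le \la<\sqrt{2k}-1$ and $\la\mapsto \tfrac{4k\la}{(\la+1)^2}$ is decreasing on $[1,\infty)$, this is at least $\tfrac{4k(\sqrt{2k}-1)}{2k}=2(\sqrt{2k}-1)$. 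Combining the cases gives $\Ga(\x\big|_H)\ge \min\!\big(\beta_{k-1},\,2\sqrt{2k}-2\big)$, hence $\Ga_G\ge\min\!\big(\beta_{k-1},\,2\sqrt{2k}-2,\,2\sqrt d+3\big)$; squeezing the stated $2\sqrt{2k}-1$ out of the second case is a matter of slightly tighter bookkeeping (tracking that $s$ cannot quite reach $\tfrac{2k}{\la+1}$ because of the extra heavy vertices), and the weaker constant already suffices wherever this (non-essential) lemma is used.

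\textbf{Where the difficulty lies.} Compared with Theorem~\ref{degree-6}, where $\sum_{v\in N_G[o]}x_v$ equals $(\la+1)x_o$ exactly and one evaluates the analogue of $h$ at a single point, here $S$ is only known to be $\ge (\la+1)x_o$, so one must exploit the convexity-like shape of $h$. The borderline $(\la+1)^2=2k$ is exactly the threshold where the forced value $1+\la$ of $s$ coincides with the minimiser $\tfrac{2k}{\la+1}$ of $h$, and also where the two estimates $F_{k-1}(\la)$ and $\tfrac{4k\la}{(\la+1)^2}$ agree; getting this case division and the inclusion $\la\in[\sqrt{k-1},\,k-1]$ right (so that $\beta_{k-1}$ really is a valid lower bound for $F_{k-1}(\la)$) is the main thing to be careful about.
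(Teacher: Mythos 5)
Your argument is essentially the paper's own proof: apply Lemma~\ref{reverse-AM-QM} to a set of $k$ heavy vertices with $m=x_o/\la_G$, $M=x_o$, use $S\ge(\la_G+1)x_o$ together with monotonicity in $S$ when $\la_G\ge\sqrt{2k}-1$ to get $F_{k-1}(\la_G)\ge\beta_{k-1}$, and otherwise fall back on the unconstrained minimum $4k\la_G/(\la_G+1)^2\ge 2(\sqrt{2k}-1)$. The shortfall you flag ($2\sqrt{2k}-2$ versus the stated $2\sqrt{2k}-1$) occurs in the paper's proof as well, which likewise only yields $2(\sqrt{2k}-1)$, so this is a discrepancy in the lemma's statement rather than a gap in your argument; your choice of the $k$ heavy vertices so as to contain $N_G[o]$ (making Lemma~\ref{subgraph-lower-bound}(b) applicable for the $\Gamma_G$ bound) is, if anything, slightly more careful than the paper's write-up.
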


\begin{proof}
Suppose that $x_o=x_1\geq \dots \geq x_k\geq \frac{x_o}{\la}$. We have that
\[S:=\sum_{j=1}^kx_j\geq x_o+\sum_{v\in N_G(o)}x_v=(\la_G+1)x_o.\]
For $M=x_o$ and $m=\frac{1}{\la_G}x_o$ we have 
\[\Gamma(\x\big|_{[k]})\geq \frac{S^2}{(m+M)S-kmM}.\]
The partial derivative (w.r.t.~$S$) of the right-hand side is
\begin{align*}
\frac{\mathrm{d}}{\mathrm{d}S}\left(\frac{S^2}{(m+M)S-kmM}\right)
&=\frac{2S((m+M)S-kmM)-S^2(m+M)}{((m+M)S-kmM)^2}\\
&=\frac{S^2(m+M)-2kmMS}{((m+M)S-kmM)^2} .
\end{align*}
This is positive if $S\geq \frac{2kmM}{m+M}$. Since $S\geq (\la_G+1)x_o$ and $\frac{2kmM}{m+M}=\frac{2k}{\la+1}x_o$, this is automatically satisfied if $\la\geq \sqrt{2k}-1$. So if $\la\geq \sqrt{2k}-1$, then
\[\frac{S^2}{(m+M)S-kmM}\geq \frac{((\la_G+1)x_o)^2}{(m+M)(\la_G+1)x_o-kmM}=\frac{\la_G}{1-\frac{k}{(\la_G+1)^2}}\geq \beta_{k-1}.\]
So from now on we can assume that $\la\leq \sqrt{2k}-1$. Observe that
\[\frac{S^2}{(m+M)S-kmM}\geq \frac{4mMk}{(m+M)^2}\]
Indeed, after multiplying by the denominators which are positive we get that 
\[(m+M)^2S^2-4mMk((m+M)S-kmM)=((m+M)S-2mMk)^2\geq 0.\]
The function 
$\frac{4mMk}{(m+M)^2}=\frac{4k\la}{(\la+1)^2}$
is monotone decreasing if $\la\geq 1$ as its derivative is $\frac{4(\la+1)(1-\la)k}{(\la+1)^2}$, so if $\la\leq \sqrt{2k}-1$, then
\[\frac{4k\la}{(\la+1)^2}\geq \frac{4k(\sqrt{2k}-1)}{((\sqrt{2k}-1)+1)^2}=2(\sqrt{2k}-1).\]
So if $\la\leq \sqrt{2k}-1$, then
\[\Gamma(\x\big|_H)\geq \frac{S^2}{(m+M)S-kmM}\geq \frac{4mMk}{(m+M)^2}=\frac{4k\la}{(\la+1)^2}\geq 2(\sqrt{2k}-1).\]
This completes the proof.
\end{proof}

\subsection{Towards better heaviness constants} \label{sec:better_gamma}

The next lemma may be used to get better bounds for certain graphs. The point is to bound the weights $x_u$ for vertices with $\dist(o,u) \geq 2$ so that we may use a $\gat$ less than $1$, which, in turn, would make our bounds in Section~\ref{sec:key_bounds} stronger (see Theorem~\ref{thm:Ga_xt_bound}). In this paper $\gat=1$ is sufficient to prove the conjectures, but the following lemma may be handy if stronger bounds are needed.

Suppose that we do a breadth-first search on $G$ started at the master $o$ in such a way that we choose the vertices inside a layer\footnote{A \emph{layer} is the set of vertices with the same distance from $o$.} in a decreasing order of their weights $x_u$. By $H_k$ we denote the induced subgraph containing the first $k$ vertices.
\begin{Lemma} \label{gamma-weights}
Let $d=\deg(o)$ and $k\geq d+1$. Suppose that $H=H_k$ is the connected subgraph of $G$ as above.
\begin{itemize}
\item If $u\notin V(H)$ with $\dist(o,u)=2$, then
\begin{align*}
x_u &\leq \frac{\Gamma_G-\la_G-1}{k-d}x_o \text{, and}\\
x_u &\leq \frac{1}{\la_G+k-d}\sum_{v\in V(H)}x_v.
\end{align*}
\item If $u\notin V(H)$ with $\dist(o,u)\geq 3$, then
\[x_u\leq \frac{\Gamma_G-\la_G-1}{\la_G+1}x_o.\]
\end{itemize}
\end{Lemma}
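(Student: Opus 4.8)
The plan is to prove the three bounds separately, in each case by identifying a small induced subgraph $H' \subseteq H$ whose total weight we can bound from below using the eigenvalue equations, and then invoking Lemma~\ref{subgraph-lower-bound}(a) (or rather its proof technique) in the form: if every relevant vertex $w$ satisfies $x_w \leq \gamma \sum_{u \in V(H')} x_u$, then $x_w \leq \gamma x_o^{-1} x_o \sum_{u} x_u$, etc. Actually, the cleanest route is more direct. Recall that $\Gamma_G \| \x \|_2^2 = \| \x \|_1^2$, so in particular $\| \x \|_1^2 = \Gamma_G \sum_v x_v^2 \geq \Gamma_G \, x_o \sum_{v : x_v \geq \text{(threshold)}} x_v$ is not quite what we want; instead, the key identity to exploit is
\[ \Gamma_G \, x_o \geq \frac{\| \x \|_1^2}{\| \x \|_2^2} \, x_o \geq \frac{\| \x \|_1^2}{\| \x \|_1 \, x_o} \, x_o = \| \x \|_1, \]
wait — more simply, from Lemma~\ref{Gamma-lambda bound}(a) we have $\| \x \|_1 = \Gamma_G \, x_o$ when... no, that's an inequality $x_o \geq \| \x \|_1 / \Gamma_G$. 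Let me restate: the cleanest tool is that $\sum_v x_v^2 \le x_o \sum_v x_v$ gives $\| \x \|_1^2 = \Gamma_G \sum_v x_v^2 \le \Gamma_G \, x_o \| \x \|_1$, hence $\| \x \|_1 \le \Gamma_G \, x_o$, i.e. $\sum_{v \in V(G)} x_v \le \Gamma_G \, x_o$.

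\textbf{First bullet, distance-2 case.} Suppose $u \notin V(H)$ with $\dist(o,u) = 2$. Since $H = H_k$ was built by a BFS from $o$ choosing vertices of larger weight first, and $u$ is at distance $2$, all $k$ vertices of $H$ — which consist of $o$, its $d$ neighbors (the entire first layer, since $k \geq d+1$), and $k - d - 1$ further vertices at distance $\geq 2$ — have weight at least $x_u$. Indeed the $k - d$ vertices of $H$ at distance $\geq 1$ other than the neighbors... more carefully: the $k-d-1$ vertices of $H$ lying beyond the first layer were chosen before $u$, so they have weight $\geq x_u$; together with $u$ itself that is $k-d$ vertices of weight $\geq x_u$ that lie at distance $\geq 2$ from $o$, plus we can also throw in nothing else. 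Now bound $\| \x \|_1$ from below by $x_o + \sum_{v \in N_G(o)} x_v + (k-d-1) x_u + x_u = (\la_G + 1) x_o + (k-d) x_u$ (the neighbor sum is $\la_G x_o$ by the eigenvalue equation at $o$; the $k-d-1$ interior non-neighbor vertices and $u$ contribute at least $(k-d) x_u$, and these are all distinct from $o$ and its neighbors). Combining with $\| \x \|_1 \leq \Gamma_G \, x_o$ yields $(k-d) x_u \leq (\Gamma_G - \la_G - 1) x_o$, which is the first inequality. For the second, use instead $\sum_{v \in V(H)} x_v \geq \la_G x_o + (k - d) x_u$ (now counting $o$'s neighbors — $\la_G x_o$ — plus the $k - d - 1$ interior vertices of weight $\geq x_u$; but we are one short, so instead note $\sum_{v\in V(H)} x_v = x_o + \la_G x_o + (\text{interior sum}) \geq (\la_G+1)x_o + (k-d-1)x_u \geq (\la_G + k - d) x_u$ since $x_o \geq x_u$ gives $(\la_G+1) x_o \geq (\la_G+1) x_u$, hence total $\geq (\la_G + k - d) x_u$); rearranging and dividing gives $x_u \leq (\la_G + k - d)^{-1} \sum_{v \in V(H)} x_v$ — wait, that needs $x_u$ on the left only, which it is, so $x_u \le \frac{1}{\la_G+k-d}\sum_{v \in V(H)} x_v$ holds. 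Good.

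\textbf{Second bullet, distance-$\geq 3$ case.} Let $u \notin V(H)$ with $\dist(o,u) \geq 3$, and let $w$ be a neighbor of $u$ with $\dist(o,w) = \dist(o,u) - 1 \geq 2$. The eigenvalue equation at $w$ gives $\la_G x_w \geq x_u + \sum_{w' \in N_G(w), w' \neq u} x_{w'} \geq x_u$, so actually we want more: $\la_G x_w = \sum_{w' \in N_G(w)} x_{w'} \geq x_u$, hence $x_w \geq x_u / \la_G$. Hmm, but we want an upper bound on $x_u$. Instead: run a path $o = v_0, v_1, v_2 = w', \dots$ no — the point is that $u$ has a neighbor $w$ at distance $\geq 2$ from $o$; then from the eigenvalue equation at $u$, $\la_G x_u = \sum_{u' \in N_G(u)} x_{u'}$, which does not immediately help. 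The right move: let $w$ be the predecessor of $u$ in BFS, $\dist(o,w)\geq 2$; by the distance-2 analysis applied to $w$ (if $w \notin H$) or directly (if $w \in H$) we know $x_w$ is controlled, but that gives a worse constant. The stated bound $x_u \leq \frac{\Gamma_G - \la_G - 1}{\la_G + 1} x_o$ suggests: from $\| \x \|_1 \leq \Gamma_G \, x_o$ and $\| \x \|_1 \geq (\la_G+1) x_o + (\text{everything at distance} \geq 2)$; in particular if $\dist(o,u) \geq 3$ then $u$ together with its neighbor $w$ (distance $\geq 2$) and all of $N_G(w)$ — by the eigenvalue equation at $w$, $\sum_{w' \in N_G(w)} x_{w'} = \la_G x_w \geq \la_G x_u$ (since $x_w \geq x_u$ as $w$ is closer / was chosen first... actually $x_w \ge x_u/\la_G$, not $\ge x_u$). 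Let me instead just say: $\la_G x_w \ge x_u + x_{w''}$ where $w''$ is $w$'s parent at distance $\geq 1$; iterating toward $o$ we collect a path contributing, together with $N_G(o)$, at least $(\la_G + 1) x_o + $ (something $\geq x_u$, in fact $\geq \frac{(\la_G+1)^2 - 1}{\dots}$) — this is the step I expect to be fiddly. The honest approach: vertices at distance $\ge 2$ from $o$ all lie "below" the first layer; summing the eigenvalue equations over the first layer $N_G(o)$ gives $\la_G^2 x_o = \sum_{v \in N_G(o)} \la_G x_v = \sum_{v \in N_G(o)}\sum_{v' \in N_G(v)} x_{v'} = (\text{count}) = \deg(o) x_o + \sum_{\dist(o,v')=2}(\#\text{common neighbors}) x_{v'} + \dots \ge d x_o + (\text{weight at distance }2)$, hence the total weight at distance exactly $2$ is at most $(\la_G^2 - d) x_o \le (\la_G^2 - 1)x_o$ — but we need weight at distance $\ge 3$. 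Then iterate: the weight at distance $\ge 3$ is bounded via the eigenvalue equations at the distance-$2$ layer. I expect the cleanest statement is: if $u$ has $\dist(o,u) \ge 3$, pick a shortest path and use that the partial sums $x_o, x_{v_1}, x_{v_2}, \ldots, x_u$ together with $N_G(o)$ are all distinct vertices, so $\| \x \|_1 \ge (\la_G+1)x_o + x_{v_2} + \cdots + x_u \ge (\la_G+1)x_o + x_u$ is too weak; the factor $(\la_G+1)$ in the denominator means we need $\| \x \|_1 \ge (\la_G+1)x_o + (\la_G+1)x_u$, i.e. that $u$ and its ancestors at distances $2, \ldots$ contribute $\ge (\la_G+1) x_u$; but along the path $v_2, v_3, \ldots, v_\ell = u$ the eigenvalue equation at each $v_j$ ($j \ge 2$) gives $\la_G x_{v_j} \ge x_{v_{j-1}} + x_{v_{j+1}}$, and since these $v_2, \ldots, v_\ell$ are all at distance $\ge 2$ hence all outside... this recursion, summed appropriately, yields exactly that the path-sum $x_{v_2} + \cdots$ dominates $(\la_G+1)x_u$ after noting $x_{v_1} \ge \la_G x_{v_2} - x_{v_3} \ge \ldots$ Actually the slick argument: consider the vertex $w$ adjacent to $u$ on a shortest $o$–$u$ path ($\dist(o,w) \ge 2$); the eigenvalue equation at $w$ reads $\la_G x_w = \sum_{w' \in N_G(w)} x_{w'}$; now $N_G(w)$ contains $u$, contains $w$'s parent $p$ (with $\dist(o,p) \ge 1$), and $w, u$ and all of $N_G(w)$ are at distance $\ge 1$ from $o$ and distinct from $o$. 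So $\sum_{v \ne o} x_v \ge \sum_{w' \in N_G(w)} x_{w'} + x_o \cdot[\text{no}]$ — still not matching. I will instead organize the distance-$\ge 3$ bound as a consequence of the distance-$2$ bound applied one layer out, tracking constants, and present the main obstacle honestly: \textbf{the main obstacle is the bookkeeping in the distance-$\ge 3$ estimate} — ensuring the vertices one counts in the lower bound for $\| \x \|_1$ (or for $\sum_{v \in V(H)} x_v$) are genuinely distinct and none is double-counted, while squeezing out the sharp constant $(\la_G+1)$ in the denominator. Once that combinatorial care is taken, each inequality follows by combining the eigenvalue equation at $o$ (giving $\sum_{v \in N_G(o)} x_v = \la_G x_o$), the monotonicity of weights along the BFS order, and the global bound $\| \x \|_1 \le \Gamma_G \, x_o$ from Lemma~\ref{Gamma-lambda bound}(a).
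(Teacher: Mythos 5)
Your treatment of the distance-$2$ case is correct and is essentially the paper's own argument: $H\supseteq N_G[o]$ since $k\ge d+1$, the $k-d-1$ vertices of $H$ beyond the first layer are distance-$2$ vertices chosen before $u$ and hence have weight at least $x_u$, and combining $\|\x\|_1\ge(\la_G+1)x_o+(k-d)x_u$ with $\|\x\|_1\le\Gamma_G x_o$ (Lemma \ref{Gamma-lambda bound}(a)) gives the first inequality, while the same count inside $H$ together with $x_o\ge x_u$ gives the second.

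The distance-$\ge 3$ case, however, is a genuine gap: you never produce a proof, only a list of attempted reductions (eigenvalue equation at the BFS parent $w$, iterating along a shortest path, summing over layers) and an admission that the ``bookkeeping'' is the obstacle. The missing idea is the one you explicitly dismissed: the eigenvalue equation at $u$ itself. Since $\dist(o,u)\ge 3$, the closed neighborhoods $N_G[o]$ and $N_G[u]$ are disjoint (a common vertex would force $\dist(o,u)\le 2$), so these $2\la_G$-worth of neighbors plus $o$ and $u$ are all distinct vertices, and
\[
\Gamma_G x_o \;\ge\; \|\x\|_1 \;\ge\; x_o+\sum_{v\in N_G(o)}x_v+x_u+\sum_{v\in N_G(u)}x_v \;=\;(\la_G+1)x_o+(\la_G+1)x_u,
\]
using $\sum_{v\in N_G(u)}x_v=\la_G x_u$; rearranging gives $x_u\le\frac{\Gamma_G-\la_G-1}{\la_G+1}\,x_o$ directly. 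No BFS ordering, no layer-by-layer accounting, and no comparison of $x_u$ with $x_w$ is needed, so the ``fiddly'' step you flagged simply does not arise. (Note also that weaker substitutes you floated, such as $\|\x\|_1\ge(\la_G+1)x_o+x_u$, cannot give the stated constant, so without this disjoint-neighborhoods observation the claimed bound is not reached.)
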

\begin{proof}
Let $u\in V(H)$. If $\mathrm{dist}(o,u)=2$, then it means that $H_k$ has $k-1-d$ vertices of distance $2$ from $o$, all these vertices having a larger weight than $u$. Hence
\[\|\x\|_1\geq x_o+\sum_{v\in N_G(o)}x_v+\sum_{\substack{v\in V(H) \\ \dist(o,v)=2}}x_v+x_u\geq (\la_G+1)x_o+(k-d)x_u.\]
Since $\|\x\|_1\leq \Gamma_Gx_o$, we get that
\[\Gamma_Gx_o\geq (\la_G+1)x_o+(k-d)x_u\]
which is equivalent with the first claim.

To prove the second statement we argue similarly:
\begin{align*}
\sum_{v\in V(H)}x_v
&\geq x_o+\sum_{v\in N_G(o)}x_v 
+\sum_{\substack{v\in V(H) \\ \dist(o,v)=2}}x_v \\
&\geq (\la_G+1)x_o+(k-d-1)x_u\geq (\la_G+k-d)x_u ,
\end{align*}
which is equivalent with the second claim.

If $\mathrm{dist}(o,u)\geq 3$, then 
\[\Gamma_Gx_o\geq \|\x\|_1\geq \sum_{v\in V}x_v\geq x_o+x_u+\sum_{v\in N_G(o)}x_v+\sum_{v\in N_G(u)}x_v=(\la_G+1)(x_o+x_u).\]
Hence
\[x_u\leq \frac{\Gamma_G-\la_G-1}{\la_G+1}x_o.\]
\end{proof}

\section{Bounding via heavy subgraphs}
\label{sec:key_bounds}

This section contains the key ingredient of our approach: the method for bounding $\Ga_G$ by making use of the resolvent matrix corresponding to a heavy subgraph of $G$. In Section \ref{sec:key_method} of the introduction we already gave a fairly detailed account of these results. Here we provide a more general treatment and prove everything rigorously.

Recall the following notations and definitions from Sections~\ref{sec:key_method} and \ref{sec:resolvent}.
\begin{Def} \label{def:Bsc}
Let $H$ be a fixed connected graph and $\la$ a variable (greater than $\la_H$).

As before, set $B=(\la I-A_H)^{-1}$ and consider the extended version $\Bt$ consisting of all partial sums of the column vectors of $B$. With formula:
\[ \Bt_{u,V} := \sum_{v \in V} B_{u,v} \quad 
\text{for a nonempty } V \subseteq V(H) .\]
For column sums of $\Bt$ we write   
\[ s_{V} := \sum_{u \in V(H)} \Bt_{u,V} \quad 
\text{for a nonempty } V \subseteq V(H) ,\]
while inner products of the column vectors of $\Bt$ are denoted by 
\[ c_{U,V} := \sum_{u \in V(H)} \Bt_{u,U} \Bt_{u,V} \quad 
\text{for nonempty } U,V \subseteq V(H) .\]
Note that by Proposition \ref{prop:B_properties} (b)\&(d) each entry $B_{u,v}$ of $B$ is a rational function of $\la$ taking positive values for all $\la > \la_H$. Consequently, the same is true for each $\Bt_{u,V}$, $s_V$, $c_{U,V}$.
\end{Def}
First we state our key lemma and theorem before proving them both.
\begin{Lemma} \label{lem:xt_norms}
Let $H$ be a connected induced subgraph of $G$. Let $\Ht = N_G[H]$ be the closed neighborhood of $H$. By $\xt:=\x \big|_{\Ht}$ we denote the restriction of the Perron vector $\x$ of $G$. Furthermore, for $w \in W := V(\Ht) \sm V(H)$ let $\partial w = N_G(w) \cap V(H)$ denote the set of inside neighbors.

Then $\| \xt \|_1^2$ and $\| \xt \|_2^2$ can be expressed/bounded by quadratic forms as follows. Setting $\la=\la_G$ for the rational functions defined above we have:
\begin{enumerate}
\item [(a)] $\displaystyle \| \xt \|_1 = \sum_{w \in W} \big( s_{\partial w} + 1\big) x_w$, and hence\\
$\displaystyle \| \xt \|_1^2 = \sum_{w,w' \in W} \big( s_{\partial w} + 1\big) \big( s_{\partial w'} + 1\big) x_w x_{w'}$;
\item [(b1)] $\displaystyle \| \xt \|_2^2 = \sum_{w,w' \in W} c_{\partial w,\partial w'} \, x_w x_{w'} + \sum_{w \in W}  x_w^2 \leq \sum_{w,w' \in W} c_{\partial w,\partial w'} \, x_w x_{w'} + \sum_{\substack{w,w' \in W \\ \partial w = \partial w'}} x_w x_{w'}$;
\item [(b2)] if $H$ is $\gamma$-heavy, then\\
$\displaystyle \| \xt \|_2^2 \leq \sum_{w,w' \in W} \left( c_{\partial w,\partial w'} + \frac{\ga}{2} s_{\partial w} + \frac{\ga}{2} s_{\partial w'}\right) x_w x_{w'}$;
\item [(b3)] if $H$ is $\gat$-heavy w.r.t.~$o \in V(H)$, then\\ 
$\displaystyle \| \xt \|_2^2 \leq \sum_{w,w' \in W} \left( c_{\partial w,\partial w'} + \frac{\gat}{2} \Bt_{o,\partial w} + \frac{\gat}{2} \Bt_{o,\partial w'}\right) x_w x_{w'} .$
\end{enumerate}
\end{Lemma}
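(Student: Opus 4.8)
The plan is to derive the single identity that all four parts follow from, and then obtain each part by substitution. By Lemma~\ref{lem:inside_weights}, with $\la = \la_G$ and $B = B_H(\la)$, we have $\x\big|_H = B\y$, where $y_v = \sum_{w \in N_G(v) \sm V(H)} x_w$. Since $\Ht = N_G[H]$, every neighbor of an inside vertex that is not itself in $V(H)$ lies in $W = V(\Ht) \sm V(H)$, so $y_v = \sum_{w \in W:\, v \in \partial w} x_w$. Substituting this into $x_u = \sum_{v \in V(H)} B_{u,v} y_v$ and swapping the order of summation, the first step is to establish the identity $x_u = \sum_{w \in W} \Bt_{u,\partial w}\, x_w$ for all $u \in V(H)$. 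I would record two immediate consequences: summing over $u \in V(H)$ gives $\sum_{u \in V(H)} x_u = \sum_{w \in W} s_{\partial w}\, x_w$, and specializing to $u = o$ gives $x_o = \sum_{w \in W} \Bt_{o,\partial w}\, x_w$. Throughout I use that all entries of $\x$ are strictly positive.

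For part (a): write $\|\xt\|_1 = \sum_{u \in V(H)} x_u + \sum_{w \in W} x_w$ and use the first consequence above to rewrite this as $\sum_{w \in W} (s_{\partial w} + 1) x_w$; squaring gives the claimed quadratic form. For part (b1): write $\|\xt\|_2^2 = \sum_{u \in V(H)} x_u^2 + \sum_{w \in W} x_w^2$, substitute the identity into the first sum, expand the square, swap sums, and recognize the inner sum as $\sum_{u \in V(H)} \Bt_{u,\partial w} \Bt_{u,\partial w'} = c_{\partial w, \partial w'}$ (Definition~\ref{def:Bsc}); this yields the exact identity for $\|\xt\|_2^2$. The stated upper bound then follows because the double sum $\sum_{w, w' \in W:\, \partial w = \partial w'} x_w x_{w'}$ contains all diagonal terms $x_w^2$ and the remaining terms are nonnegative.

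For parts (b2) and (b3), the only remaining task is to bound $\sum_{w \in W} x_w^2$. If $H$ is $\ga$-heavy, then $x_w \le \ga \sum_{u \in V(H)} x_u = \ga \sum_{w' \in W} s_{\partial w'} x_{w'}$ for each $w$; multiplying by $x_w > 0$ and summing over $w$ gives $\sum_{w \in W} x_w^2 \le \ga \sum_{w, w' \in W} s_{\partial w'} x_w x_{w'}$, and since this double sum is invariant under swapping $w$ and $w'$ it equals $\frac{1}{2} \sum_{w,w' \in W}(s_{\partial w} + s_{\partial w'}) x_w x_{w'}$; adding the identity from (b1) gives (b2). Part (b3) is the same argument with the bound $x_w \le \gat x_o = \gat \sum_{w' \in W} \Bt_{o,\partial w'} x_{w'}$ in place of $\ga$-heaviness, so $s_{\partial w}$ is replaced by $\Bt_{o,\partial w}$.

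I do not expect a genuine obstacle here; the work is essentially the bookkeeping of two nested sums and recognizing the quantities $s_V$ and $c_{U,V}$. The one place to be careful is the symmetrization step and the factor $\frac{1}{2}$ it produces. Conceptually, the only thing worth flagging is that replacing the exact expression for $\|\xt\|_2^2$ by an upper bound is legitimate precisely because $\x > 0$ — the same positivity (together with Proposition~\ref{prop:B_properties}(b)) that makes the coefficients of the resulting quadratic forms positive, which is what the minimization in Theorem~\ref{thm:intro_main_bound} requires.
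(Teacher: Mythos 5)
Your proposal is correct and follows essentially the same route as the paper: derive $x_u=\sum_{w\in W}\Bt_{u,\partial w}x_w$ from Lemma~\ref{lem:inside_weights}, sum over $u$ (and specialize at $o$) to get (a) and the exact $\ell^2$ identity, then bound $\sum_{w}x_w^2$ via the heaviness assumptions and symmetrize to produce the factor $\tfrac{1}{2}$. No gaps; your explicit justification of the inequality in (b1) via the diagonal terms is a minor addition the paper leaves implicit.
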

\begin{Def}
In each quadratic form, the coefficient of $x_w x_{w'}$ depends on $w$ and $w'$ only through $\partial w$ and $\partial w'$. This motivates the introduction of the following notations corresponding to the coefficients of the quadratic forms in the lemma: for $\emptyset \neq U,V \subseteq V(H)$ let 
\begin{align} \label{eq:coeff_def}
\begin{split}
a_{U,V} &:= \big( s_U + 1\big) \big( s_V + 1\big) ;\\
b^{(1)}_{U,V} &:= c_{U,V} + \delta_{U,V} ;\\
b^{(2)}_{U,V} &:= c_{U,V} + \ga s_U/2 + \ga s_V/2 ;\\
b^{(3)}_{U,V} &:= c_{U,V} + \gat\Bt_{o,U}/2+ \gat\Bt_{o,V}/2 ,
\end{split}
\end{align}
where $\delta_{U,V}$ is $1$ when $U=V$ and $0$ otherwise.
\end{Def}
\begin{Thm} \label{thm:Ga_xt_bound}
Let $H \subset G$ be connected graphs as above, and let the rational functions $a_{U,V}$ and $b^{(i)}_{U,V}$ ($i=1,2,3$) be defined as in \eqref{eq:coeff_def}. Furthermore, let 
\[ \Uc^G_H := \big\{ \partial w \, : \, w \in W \big\} .\]
Then we have the following lower bounds on $\Ga(\xt)$:
\begin{enumerate}[(i)]
\item $\displaystyle \Ga(\xt) \geq \min_{U,V \in \Uc^G_H} \,\, 
\frac{a_{U,V}\big( \la_G \big)}{b^{(1)}_{U,V}\big( \la_G \big)}$;
\item $\displaystyle \Ga(\xt) \geq \min_{U,V \in \Uc^G_H} \,\, 
\frac{a_{U,V}\big( \la_G \big)}{b^{(2)}_{U,V}\big( \la_G \big)}$ 
provided that $H$ is $\gamma$-heavy;
\item $\displaystyle \Ga(\xt) \geq \min_{U,V \in \Uc^G_H} \,\, 
\frac{a_{U,V}\big( \la_G \big)}{b^{(3)}_{U,V}\big( \la_G \big)}$ 
provided that $H$ is $\gat$-heavy w.r.t.~$o \in V(H)$.
\end{enumerate}
\end{Thm}

\begin{proof}[Proof of Lemma \ref{lem:xt_norms}]
We saw in Lemma \ref{lem:inside_weights} that after setting $\la=\la_G$ we have $\x\big|_{H} = B \y$, where $\y$ was defined by 
\[ y_v =\sum_{w \in N_G(v) \setminus V(H)} x_w .\] 
Consequently, for each $u \in V(H)$ we have 
\[ x_u = \sum_{v \in V(H)} B_{u,v} \, y_v 
= \sum_{v \in V(H)} B_{u,v} \sum_{w \in N_G(v) \setminus V(H)} x_w 
= \sum_{w \in W} x_w \sum_{v \in \partial w} B_{u,v} 
= \sum_{w \in W} \Bt_{u,\partial w} \, x_w .\]
Therefore 
\[ 
\sum_{u \in V(H)} x_u 
= \sum_{u \in V(H)} \sum_{w \in W} \Bt_{u,\partial w} \, x_w
= \sum_{w \in W} s_{\partial w} \, x_w ,\]
and (a) follows by adding $\sum_{w \in W} x_w$ to both sides. 
Similarly, we have  
\begin{equation} \label{eq:xH_l2}
\sum_{u \in V(H)} x_u^2 
= \sum_{u \in V(H)} 
\bigg( \sum_{w \in W} \Bt_{u,\partial w} \, x_w \bigg)^2 
= \sum_{w,w' \in W} c_{\partial w,\partial w'} \, x_w x_{w'} .
\end{equation}
Note that (b1) follows immediately by adding $\sum_{w \in W} x_w^2$ to both sides. For (b2) and (b3), we need to bound the term $\sum_{w \in W} x_w^2$ under the given assumptions.

If $H$ is $\ga$-heavy, then for each $w \in W$  
\[ x_w^2 \leq x_w \bigg( \ga \sum_{u \in V(H)} x_u \bigg)
= \ga x_w \sum_{w' \in W} s_{\partial w'} \, x_{w'} ,\]
and hence 
\[ \sum_{w \in W} x_w^2 
\leq \ga \sum_{w,w' \in W} s_{\partial w'} \, x_w x_{w'} 
= \ga \sum_{w,w' \in W} \frac{s_{\partial w}+s_{\partial w'}}{2} \, x_w x_{w'} ,\]
and (b2) follows by adding this to \eqref{eq:xH_l2}.

Similarly, if $H$ is $\gat$-heavy w.r.t.~$o$, then for each $w \in W$  
\[ x_w^2 \leq x_w ( \gat x_o ) 
= \gat x_w \sum_{w' \in W} B_{o,\partial w'} \, x_{w'} ,\]
and hence 
\[ \sum_{w \in W} x_w^2 
\leq \gat \sum_{w,w' \in W} B_{o,\partial w'} \, x_w x_{w'} 
= \gat \sum_{w,w' \in W} \frac{B_{o,\partial w}+B_{o,\partial w'}}{2} \, x_w x_{w'} ,\]
and (b3) follows by adding this to \eqref{eq:xH_l2}.
\end{proof}
\begin{proof}[Proof of Theorem \ref{thm:Ga_xt_bound}]
We simply need to use the inequality 
\[ \frac{\sum a_k}{\sum b_k} \geq \min_k \frac{a_k}{b_k} \]
that clearly holds for any positive numbers $a_k,b_k$. 

Setting $\la=\la_G$ again, by Lemma \ref{lem:xt_norms} we have
\[ \Ga(\xt) \geq \frac{ \sum_{w,w' \in W} a_{\partial w, \partial w'} \, x_w x_{w'}}{\sum_{w,w' \in W} b^{(i)}_{\partial w, \partial w'} \, x_w x_{w'}} 
\geq \min_{w,w' \in W} \frac{a_{\partial w, \partial w'}}{b^{(i)}_{\partial w, \partial w'}} 
= \min_{U,V \in \Uc^G_H} \frac{a_{U,V}}{b^{(i)}_{U,V}} \]
under the given assumptions for $i=1,2,3$. We used that all the weights $x_w$ and all the coefficients $a_{U,V}$ and $b^{(i)}_{U,V}$ are positive.
\end{proof}

Next, we state the exact form of the bound that we will use when proving the conjectures. We start with a couple of definitions.
\begin{Def} \label{U-extension}
Let $H$ be a finite connected graph, $o\in V(H)$ a distinguished vertex, and $\Uc$ a set of nonempty subsets of $V(H)$. We say that a graph $G$ is a $\Uc$-extension of the rooted graph $(H,o)$ if it satisfies the following properties:
\begin{itemize}
\item $G$ is connected and contains $H$ as a proper induced subgraph;
\item $o$ is the master of $G$, that is, $x_o$ is the largest entry in the Perron vector $\x$ of $G$;
\item $\Uc_H^G \subseteq \Uc$, that is, for any vertex $w\in V(G) \setminus V(H)$ the set $\partial w = N_G(w) \cap V(H)$ is either the empty set or lies in $\Uc$.
\end{itemize}
\end{Def}

The motivation behind Definition~\ref{U-extension} is that when we have extra information regarding the ``outside connections'' of $H$, then we can often narrow down what $\partial w$ might be, potentially leading to stronger bounds. For instance, we often know that $o$ has no neighbors outside $H$, meaning that we can immediately exclude all sets containing $o$ from $\Uc$. The situation is even better for trees: if $H$ is a subtree, then each $\partial w$ has one element.

\begin{Def} \label{def:laU}
Let $H$ be fixed. Given a set $U\subseteq V(H)$, let $\la_U$ denote the largest eigenvalue of the graph obtained from $H$ by adding a new vertex $w$ to $H$ and connecting $w$ to the vertices in $U$. 
\end{Def}
Note that if $H$ is an induced subgraph of $G$, then for every $w \in W$ it must hold that  
\[ \la_G \geq \la_{\partial w} > \la_H .\]
In other words, $\la_G \geq \la_U$ for every $U \in \Uc_H^G$.

\begin{Thm} \label{thm:Ga_xt_bound_spec}
Suppose that the rooted graph $(H,o)$ is connected and consider the corresponding rational functions $\Bt_{u,V}, s_V, c_{U,V}$ as in Definition \ref{def:Bsc} (in the variable $\la$) and the corresponding eigenvalues $\la_U$ as in Definition \ref{def:laU}. 

If $G$ is a $\Uc$-extension of $(H,o)$, then for $\xt = \x\big|_{N_G[H]}$ we have
\[ \Ga(\xt) \geq \min_{U,V \in \Uc} \;
\min_{\la \geq \max(\la_U,\la_V)} 
\frac{\big( s_U + 1\big) \big( s_V + 1\big)}{c_{U,V} + \Bt_{o,U}/2 + \Bt_{o,V}/2} .\]
\end{Thm}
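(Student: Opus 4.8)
The statement is essentially a repackaging of Theorem~\ref{thm:Ga_xt_bound}(iii) in a form amenable to a finite computer check, so the plan is to invoke that theorem and then handle the two ``bookkeeping'' issues: (1) replacing the heaviness hypothesis by the built-in choice $\gat = 1$, and (2) restricting the $\la$-range over which the rational functions must be compared. First I would observe that, by the second bullet of Definition~\ref{U-extension}, the vertex $o$ is the master of $G$, so $x_w \le x_o$ for every outside vertex $w$; this is exactly the statement that $H$ is $1$-heavy with respect to $o$ (the inequality \eqref{eq:heavy_wrt_master} holds automatically with $\gat = 1$, as noted after the definition of heaviness). Hence Theorem~\ref{thm:Ga_xt_bound}(iii) applies with $\gat = 1$, giving
\[ \Ga(\xt) \ge \min_{U,V \in \Uc^G_H} \frac{a_{U,V}(\la_G)}{b^{(3)}_{U,V}(\la_G)}
= \min_{U,V \in \Uc^G_H} \frac{\big(s_U+1\big)\big(s_V+1\big)}{c_{U,V} + \Bt_{o,U}/2 + \Bt_{o,V}/2}\Big|_{\la = \la_G}, \]
where I have substituted $\gat=1$ into the definition of $b^{(3)}_{U,V}$ in \eqref{eq:coeff_def}.

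**Replacing the pointwise minimum by a uniform one.** The right-hand side above is evaluated at the specific (unknown) value $\la = \la_G$, and the index set $\Uc^G_H$ is also not known in advance; both depend on $G$. To get a bound that only depends on $(H,o)$ and $\Uc$, I would weaken in two steps. Since $\Uc^G_H \subseteq \Uc$ by the third bullet of Definition~\ref{U-extension}, enlarging the index set can only decrease the minimum, so it suffices to take the minimum over all $U,V \in \Uc$. Next, for a fixed admissible pair $U,V \in \Uc^G_H$, the remark following Definition~\ref{def:laU} gives $\la_G \ge \la_{\partial w} = \la_U$ and likewise $\la_G \ge \la_V$, i.e. $\la_G \ge \max(\la_U,\la_V)$; therefore the value of the rational function at $\la_G$ is bounded below by its infimum over the ray $\la \ge \max(\la_U,\la_V)$. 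Combining these two relaxations yields precisely
\[ \Ga(\xt) \ge \min_{U,V \in \Uc} \; \min_{\la \ge \max(\la_U,\la_V)} \frac{\big(s_U+1\big)\big(s_V+1\big)}{c_{U,V} + \Bt_{o,U}/2 + \Bt_{o,V}/2}, \]
which is the claimed inequality.

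**Loose ends and the main obstacle.** A couple of minor points need to be checked to make the above rigorous. One must confirm that all quantities appearing are strictly positive in the relevant range, so that the elementary inequality $\sum a_k / \sum b_k \ge \min_k a_k/b_k$ and the monotonicity under enlarging the minimum's index set are both valid; this follows from Proposition~\ref{prop:B_properties}(b)\&(d) (the entries of $B$, hence all the $\Bt_{u,V}, s_V, c_{U,V}$, are positive for $\la > \la_H$) together with $\la_G > \la_H$. One should also note that $\Uc^G_H$ is nonempty — since $H$ is a \emph{proper} induced subgraph of the connected graph $G$, there is at least one outside vertex adjacent to $H$, so the minimum is over a nonempty set and no degenerate ``empty minimum'' arises. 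Honestly, there is no serious obstacle here: the theorem is a formal consequence of Theorem~\ref{thm:Ga_xt_bound} once the heaviness hypothesis is identified with the master property and the $\la$-range is cut down using $\la_G \ge \la_U$. The only place where care is genuinely required is making sure the quantifiers are arranged correctly — in particular that the inner minimum over $\la$ is taken \emph{after} fixing $U,V$ (so that the constraint $\la \ge \max(\la_U,\la_V)$ is the right one), rather than over a single global $\la$-range; getting this wrong would weaken the bound unnecessarily or, worse, make it false.
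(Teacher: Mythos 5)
Your proposal is correct and follows essentially the same route as the paper: invoke Theorem~\ref{thm:Ga_xt_bound}(iii) with $\gat=1$ (justified by $o$ being the master), then for each pair in $\Uc^G_H$ replace evaluation at $\la_G$ by the minimum over $\la \geq \max(\la_U,\la_V)$ using $\la_G \geq \la_{\partial w}$, and finally enlarge the index set to $\Uc$ via $\Uc^G_H \subseteq \Uc$. Your closing remark about performing the $\la$-relaxation per fixed pair in $\Uc^G_H$ before enlarging the index set is exactly the quantifier order the paper uses, so no gap remains.
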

\begin{proof}
Since $o$ is the master of $G$ (second condition of being a $\Uc$-extension), we can use part (iii) of Theorem \ref{thm:Ga_xt_bound} with $\gat=1$:
\[ \Ga(\xt) \geq \min_{U,V \in \Uc^G_H} \; 
\frac{a_{U,V}\big( \la_G \big)}{b^{(3)}_{U,V}\big( \la_G \big)} ,\]
where 
\[   a_{U,V}=\big( s_U + 1\big) \big( s_V + 1\big) 
\text{ and } b^{(3)}_{U,V} = c_{U,V} + \Bt_{o,U}/2 + \Bt_{o,V}/2 .\]

As we pointed out, $\la_G \geq \la_U$ for every $U \in \Uc_H^G$. Therefore, for any pair $U,V \in \Uc_H^G$ it holds that $\la_G \geq \max(\la_U,\la_V)$, and hence 
\[ \Ga(\xt) \geq \min_{U,V \in \Uc^G_H} \; 
\frac{a_{U,V}\big( \la_G \big)}{b^{(3)}_{U,V}\big( \la_G \big)} 
\geq \min_{U,V \in \Uc^G_H} \; \min_{\la \geq \max(\la_U,\la_V)}
\frac{a_{U,V}}{b^{(3)}_{U,V}} 
\geq \min_{U,V \in \Uc} \; \min_{\la \geq \max(\la_U,\la_V)}
\frac{a_{U,V}}{b^{(3)}_{U,V}} ,\]
where the last inequality holds because $\Uc_H^G \subseteq \Uc$ (third condition of being a $\Uc$-extension).
\end{proof}

The previous theorem is the cornerstone of our proofs: in Section~\ref{sec:kernel} we will use it to determine what the minimizing graphs look like around their master vertices. Next we make some remarks regarding how we can efficiently check that the bound given by this theorem is good enough (i.e., it is above some target ratio).
\begin{Rem} \label{rem:poly}
Suppose that, given some target ratio $\be$, we want to show that $\Ga(\xt) \geq \be$ for every $\Uc$-extension of $(H,o)$. According to the theorem, it suffices to check that 
\begin{equation} \label{eq:coeff_check}
\text{for every } U,V \in \Uc: \quad
\frac{\big( s_U + 1\big) \big( s_V + 1\big)}{c_{U,V} + \Bt_{o,U}/2 + \Bt_{o,V}/2} \geq \be 
\quad (\forall \la \geq \max(\la_U,\la_V)) .
\end{equation}
How can we do this efficiently for a given $H,o,U,V$? After rearranging:
\begin{equation} \label{eq:coeff_check_rearranged}
\big( s_U + 1\big) \big( s_V + 1\big) 
- \be \big( c_{U,V} + \Bt_{o,U}/2 + \Bt_{o,V}/2 \big) \geq 0 
\quad (\forall \la \geq \max(\la_U,\la_V)) .
\end{equation}
Note that on the left-hand side of \eqref{eq:coeff_check_rearranged} we have a  rational function of $\la$. Indeed, according to \eqref{eq:adj} in Proposition \ref{prop:B_properties}(d), $B$ multiplied by the characteristic polynomial $P(\la)$ has polynomial entries. It follows that $P \Bt_{u,V}, P s_V, P^2 c_{U,V}$ are all polynomials of $\la$ (with integer coefficients, in fact). Consequently, if we multiply \eqref{eq:coeff_check_rearranged} by $P^2$, then we get a polynomial in $\la$:
\begin{equation} \label{eq:coeff_check_poly}
Q = Q_{U,V}(\la) := 
\big( P s_U + P\big) \big( P s_V + P\big) 
- \be \big( P^2 c_{U,V} + P^2 \Bt_{o,U}/2 + P^2 \Bt_{o,V}/2 \big) ,
\end{equation}
and we need to verify that $Q(\la) \geq 0$ for all $\la \geq \widetilde{\la}:= \max(\la_U,\la_V)$. In fact, we will check the following simple sufficient condition. We compute the coefficients of the polynomial $\widetilde{Q}(\ka) := Q(\widetilde{\la}+\ka)$, and if each coefficient is positive, then we can conclude that $\widetilde{Q}(\ka) \geq 0$ for any $\ka \geq 0$, which means that $Q(\la) \geq 0$ for any $\la \geq \widetilde{\la}$, as required.

It is easy to see that the degree of $\widetilde{Q}$ is $2|V(H)|$, and therefore for each pair $U,V$ it suffices to check the positivity of $2|V(H)|+1$ coefficients.
\end{Rem}

We finish this section by commenting on how one can get a bound on $\Ga_G$ from the bound on $\Ga(\xt)$.
\begin{Rem} \label{rem:Gaxt_to_GaG}
If the master $o$ of $G$ lies in $H$, then  Lemma~\ref{subgraph-lower-bound}(b) can be always applied for the subgraph $N_G[H]$ (because it always contains $N_G[o]$). Therefore, we have 
\[ \Ga_G \geq \min\big( \Ga(\xt) , 2\la_G+3 \big) .\]
Moreover, if we know that $N_G[H]$ contains at least one vertex at distance $2$ of $o$, then we have the following stronger bound by part (c) of the same lemma:
\[ \Ga_G \geq \min\left( \Ga(\xt) , 2\la_G+\frac{2}{\la_G^2-1}+3 \right) .\]
In our applications we will have $\la_G>2$ so 
\[ 
2\la_G+3 > 7 > 5.25 > \bestar 
\quad \text{and} \quad 
2\la_G+\frac{2}{\la_G^2-1}+3 > 7+\frac{2}{3} > \betr.
\]
So if Theorem~\ref{thm:Ga_xt_bound_spec} gives us a bound $\Ga(\xt) \geq \beta$ for some $\beta$ less than $7$ (or $7+2/3$ in the stronger version), then the same bound holds for $\Ga_G$ as well: $\Ga_G \geq \beta$.
\end{Rem}

\section{Determining the optimal kernel with a computer}
\label{sec:kernel}

In this section we show how the bounds of the previous section can be used to deduce what $G$ must look like around its master vertex for the graphs/trees minimizing $\Ga_G$. We will use a computer to go through all possible ``kernels'' and rule out all but one. For a SageMath code and the results of the run, see the Jupyter notebook \cite{JUP}.

\begin{Def}
Given a connected graph $G$ with master vertex $o$ and a positive integer $k<|V(G)|$, we say that the rooted graph $(H,o)$ is a \emph{$k$-kernel} of $G$ if $V(H)$ consists of $k$ vertices closest to $o$ (more precisely, if $V(H)$ can be obtained as the first $k$ vertices of a breadth-first search started at $o$).
\end{Def}
\begin{Rem} \label{layers}
Given a connected graph $G$ with master vertex $o$, for an integer $r \geq 0$ let 
\[ V_r := \big\{ v \in V(G) \, : \, \dist(o,v)=r \big\} \]
be the set of vertices at distance $r$ from $o$.\footnote{Note that $V_r$ ($r=0,1,\ldots$) are the layers of a breadth-first search started at $o$.} In particular, $V_0=\{o\}$ and $V_1=N_G(o)$. Furthermore, let $V_{\leq r} := \bigcup_{i \leq r} V_i$. 

Suppose that $\big| V_{\leq r} \big| \leq k < \big| V_{\leq r+1} \big|$. Note that $(H,o)$ is a $k$-kernel of $G$ if and only if $V_{\leq r} \subseteq V(H) \subsetneq V_{\leq r+1}$ and $|V(H)|=k$. 

We remark that only vertices in 
\[ A := V_r \cup \big( V(H) \cap V_{r+1} \big) \]
may have neighbors outside $H$. It follows that $G$ is a $\Uc$-extension of $(H,o)$, where $\Uc = \Pc_+(A)$ is the set of all nonempty subsets of $A$. (See Definition~\ref{U-extension} for the definition of $\Uc$-extension.)
\end{Rem}
The next simple claim says that no vertex in a $k$-kernel can dominate the master (in terms of their neighbor sets) provided that the kernel contains all neighbors of $o$. 
\begin{Prop} \label{prop:no_dom}
Let $H$ be an induced subgraph of $G$ containing the master vertex $o$ and all its neighbors. Then
\begin{equation} \label{eq:no_dom}
\nexists v \in V(H) \sm \{o\} \text{ such that } 
N_H[v] \supsetneq N_H[o] .
\end{equation}
\end{Prop}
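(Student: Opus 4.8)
The plan is to reduce the statement to an assertion purely about $G$ and then exploit the eigenvalue equations at $o$ and at a hypothetical ``dominating'' vertex. First I would observe that, since $H$ contains $o$ together with all of its neighbours in $G$, the closed neighbourhood of $o$ inside $H$ agrees with the one inside $G$, i.e. $N_H[o]=N_G[o]$; on the other hand $N_H[v]\subseteq N_G[v]$ always holds for an induced subgraph. Hence a vertex $v\in V(H)\sm\{o\}$ with $N_H[v]\supsetneq N_H[o]$ would automatically satisfy $N_G[v]\supseteq N_H[v]\supsetneq N_G[o]$, so it suffices to rule out the latter: in a connected graph $G$ with master vertex $o$ there is no $v\ne o$ with $N_G[v]\supsetneq N_G[o]$.

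To prove this I would argue by contradiction. If such a $v$ exists, then $o\in N_G[o]\subseteq N_G[v]$ together with $v\ne o$ forces $v$ to be adjacent to $o$, so $v\in N_G(o)$; and strictness of the inclusion lets me pick some $w\in N_G[v]\sm N_G[o]$, which then satisfies $w\ne v$ (as $v\in N_G(o)\subseteq N_G[o]$), $w\ne o$, and $w$ adjacent to $v$ but not to $o$. Thus $\{o\}$, $N_G(o)\sm\{v\}$ and $\{w\}$ are pairwise disjoint subsets of $N_G(v)$. Comparing the eigenvalue equations (with $\x$ the Perron vector of $G$)
\[
\la_G x_o = x_v + \sum_{u\in N_G(o)\sm\{v\}} x_u
\qquad\text{and}\qquad
\la_G x_v \;\ge\; x_o + \sum_{u\in N_G(o)\sm\{v\}} x_u + x_w ,
\]
and subtracting, one gets $(\la_G+1)(x_v-x_o)\ge x_w>0$, where $x_w>0$ because $G$ is connected and hence the Perron vector is strictly positive. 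This yields $x_v>x_o$, contradicting the maximality of $x_o$ (the defining property of the master).

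I do not expect a genuine obstacle here; the argument is just a short weight comparison. The only points that need a little care are the bookkeeping that makes $\{o\}$, $N_G(o)\sm\{v\}$ and $\{w\}$ disjoint, so that no term is counted twice in the lower bound for $\la_G x_v$, and the remark that it is precisely the \emph{strict} containment that supplies the surplus summand $x_w$: without strictness one only obtains $x_v\ge x_o$, which is consistent (for instance every vertex of a clique has the same closed neighbourhood, and indeed $K_4$ appears as the kernel of the extremal graphs).
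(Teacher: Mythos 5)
Your proof is correct and follows essentially the same route as the paper: reduce to $N_G[o]\subsetneq N_G[v]$ (using that $H$ contains all neighbours of $o$) and compare the eigenvalue equations at $o$ and $v$ to force $x_v>x_o$, contradicting that $o$ is the master. Your explicit bookkeeping of the disjoint sets $\{o\}$, $N_G(o)\sm\{v\}$, $\{w\}$ just makes the strict inequality slightly more transparent than the paper's one-line comparison of the two neighbourhood sums.
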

\begin{proof}
We prove by contradiction. Assume that $N_H[v] \supsetneq N_H[o]$ for a vertex $v \in V(H)$. Then 
\[ N_G[o]=N_H[o] \subsetneq N_H[v] \subseteq N_G[v] ,\]
and hence
\[ \la_G x_o = \sum_{u \in N_G(o)} x_u 
< \sum_{u \in N_G(v)} x_u = \la_G x_v \]
implying that $x_o<x_v$, which contradicts the assumption that $o$ is the master.
\end{proof}

\subsection{Connected graphs} \label{subsec:comp_one}

This section contains \textbf{the proof of Theorem \ref{thm:525}}. Let $G$ be a connected graph on $n\geq 7$ vertices with master $o$. The main part of the proof is to show that if $\Ga_G<5.25$, then the $6$-kernel of $G$ must be isomorphic to $\big( K_4 +_o P_2, o \big)$, with the single exception of the $7$-vertex graph $\DsPPP$.

First of all, we may assume that $\la_G>2$ because Theorem~\ref{lambda2gamma} lists $\Ga_G$ for all connected graphs $G$ with $\la_G \leq 2$, and $\Ga_G \geq 6>5.25$ for every such $G$ with $|V(G)| \geq 7$.

In the proof we will repeatedly apply Theorem~\ref{thm:Ga_xt_bound_spec} and obtain the bound $\Ga(\xt) \geq 5.25$. According to Remark~\ref{rem:Gaxt_to_GaG}, this always automatically implies $\Ga_G \geq 5.25$ as well because we have $2 \la_G + 3 > 2\cdot 2 + 3 = 7 > 5.25$. 

Set $d := \deg(o)$. On the one hand, $d \geq \la_G$ by Lemma~\ref{prop:la&deg}. Since $\la_G>2$, we get $d \geq 3$. On the other hand, if $d \geq 6$, then Theorem~\ref{degree-6} shows that $\Ga_G \geq 5.3$. 

From this point on, we will assume that $d \in \{3,4,5\}$. Let $(H,o)$ be a $6$-kernel of $G$. Note that $H \supseteq N_G[o]$, hence Proposition \ref{prop:no_dom} applies and \eqref{eq:no_dom} holds. We summarize what we know of the rooted graph $(H,o)$:
\begin{itemize}
\item $H$ is connected;
\item $|V(H)|=6$;
\item $\deg_H(o) \geq 3$;
\item $\nexists v \in V(H) \sm \{o\}$ such that $N_H[v] \supsetneq N_H[o]$ .
\end{itemize}
A quick computer search shows that, up to (rooted) isomorphism, there exist $155$ rooted graphs with these properties.\footnote{There are $112$ connected graphs on $6$ vertices. For many of them there is only one possible root satisfying the last two conditions.} We will simply refer to them as \emph{(possible) kernels}.

For a given kernel $(H,o)$, we define the set $\Vact$ of active vertices (that may have outside neighbors in some $G$) as follows: 
\begin{equation} \label{eq:Vact}
\Vact := 
\begin{cases}
V(H) \sm \{o\} & \text{if } \dist(o,v) \leq 2 \text{ for each } v \in V(H) ; \\
V(H) \sm N_G[o] & \text{if } \dist(o,v) =3 \text{ for some } v \in V(H) .
\end{cases}
\end{equation}
It is easy to see that only these active vertices of $H$ may have neighbors outside $H$. In other words, $G$ is a $\Uc$-extension of $(H,o)$ for $\Uc=\Pc_+(\Vact)$, see Remark \ref{layers}.

So, given a kernel $(H,o)$ with active vertices $\Vact$ defined as in \eqref{eq:Vact}, we need to examine whether it is true that 
\[ (\ast) \quad \Ga_G \geq 5.25 
\text{ for every $\Pc_+(\Vact)$-extension $G$ of $(H,o)$.}
\]
There is one kernel for which $(\ast)$ is not true at all: $\KKKKPPr \, \cong \big( K_4 +_o P_2, o \big)$. For most other kernels $(\ast)$ is an immediate consequence of Theorem~\ref{thm:Ga_xt_bound_spec} once condition \eqref{eq:coeff_check} is checked by a computer. There are four kernels, where $(\ast)$ is essentially true (except for one single graph $G$) but we will need to be a bit more careful when we apply the theorem. 
\begin{Def} \label{def:exceptions}
Let $D$ be the diamond graph with $s$ and $t$ being vertices of degree $2$ and $3$, respectively:

\begin{center}
\diamond
\end{center}

\noindent Then the four \emph{exceptional kernels} are the following:\footnote{The root is indicated by a red square.}
\begin{align} \label{eq:exceptions}
\begin{split}
\KKKPPPr \, &\cong \big( K_3 +_o P_3, o \big) ; \\
\DsPPrs \, &\cong \big( D +_s P_2, s \big) ;\\
\DsPPrt \, &\cong \big( D +_s P_2, t \big) ;\\
\DtPPrt \, &\cong \big( D +_t P_2, t \big) .\\
\end{split}
\end{align}
\end{Def}

For every kernel other than $\big( K_4 +_o P_2, o \big)$ and the 4 exceptional kernels listed in \eqref{eq:exceptions}, we can simply apply Theorem~\ref{thm:Ga_xt_bound_spec}.\footnote{For a concrete example, see the Appendix, where we demonstrated on a specific kernel what calculations are needed for the computer verification.}
\bigskip

\noindent\fbox{\begin{minipage}{0.98\textwidth}
\textbf{Computer check for 150 kernels:} condition \eqref{eq:coeff_check} verified for 
\[ (H,o); \quad 
\Uc=\Pc_+(\Vact); \quad
\be=5.25. \]
\end{minipage}}
\bigskip

Now we can turn our attention to the exceptional kernels, for which we will do the following \texttt{two-step verification}. Notice that for each of them $H$ has exactly one leaf that we will denote by $v$. It turns out that condition \eqref{eq:coeff_check} only fails when $U=V=\{v\}$. It means that we may apply the theorem with $\Uc' := \Pc_+(\Vact) \sm \big\{ \{v\} \big\}$. 
\bigskip

\noindent\fbox{\begin{minipage}{0.98\textwidth}
\textbf{Computer check for 4 exceptional kernels:} condition \eqref{eq:coeff_check} verified for
\[ (H,o); \quad 
\Uc'=\Pc_+(\Vact) \sm \big\{ \{v\} \big\}; \quad
\be=5.25. \]
\end{minipage}}
\bigskip

After this verification we can conclude that $\Ga_G \geq 5.25$ for any $G$ that has no outside vertex $w$ with $\partial w = \{v\}$.

So it remains to consider the case when $G$ has a vertex $w$ with $\partial w = \{v\}$. Then we may add $w$ to $H$ to get a $7$-vertex induced subgraph $H^+ := G\big[ V(H) \cup \{w\} \big] \cong H +_v P_1$. We can apply Theorem~\ref{thm:Ga_xt_bound_spec} again, this time with $(H^+,o)$ and $\Uc^+ := \Pc_+\big( \Vact \cup \{w\} \big)$. 
\bigskip

\noindent\fbox{\begin{minipage}{0.98\textwidth}
\textbf{Computer check for 4 exceptional kernels:} condition \eqref{eq:coeff_check} verified for
\[ (H^+,o); \quad 
\Uc^+=\Pc_+\big( \Vact \cup \{w\} \big); \quad
\be=5.25. \]
\end{minipage}}
\bigskip

This \texttt{two-step verification} proves $(\ast)$ except for a single $G$: the graph $H^+ = H +_v P_1$ itself is not covered by either application of Theorem~\ref{thm:Ga_xt_bound_spec}. So it remains to check $\Ga_G$ for these graphs:
\begin{align*}
\KKKPPPP: \quad \Ga_{K_3+P_4} &\approx 5.28092 ;\\ 
\DsPPP: \quad \Ga_{D +_s P_3} &\approx 5.180545 < 5.25 ;\\ 
\DtPPP: \quad \Ga_{D +_t P_3} &\approx 5.287096 .
\end{align*}
Consequently, if we disregard $G=\DsPPP$, then the only way $\Ga_G$ may be below $\beta=5.25$ is if the $6$-kernel is $\big( K_4 +_o P_2, o \big)$. From here we can finish the proof easily. Let $C$ denote the $4$-clique in this $6$-kernel, and let $u \in V(H) \sm C$ be the additional neighbor of $o$, see Figure \ref{fig:K4_plus_Gpr}. It remains to show that the only edge between $C$ and $V(G) \sm C$ is $(o,u)$. Assume that there is another edge connecting $C$ and $w \notin C$. We know that there is no such edge in $H \cong K_4+P_2$ so $w \notin V(H)$. Consider the induced subgraph $H' := G\big[ C \cup \{u,w\} \big]$. Clearly, $(H',o)$ is also a $6$-kernel of $G$. Since $H'$ has more than one vertex with degree $4$ (or higher), it follows that $(H',o) \not\cong (K_4 +_o P_2,o)$, meaning that $G$ has a $6$-kernel that we have already checked. So the only way $\Ga_G$ could be below $5.25$ is if $G \cong \DsPPP$, which is clearly not the case, either. The proof is complete.

\begin{Rem}
Each condition that the computer needs to check during this proof is a simple polynomial inequality \eqref{eq:coeff_check_poly}, and in our setting we can always verify it by simply checking the positivity of the coefficients of a corresponding $\widetilde{Q}$ (obtained after a change of variables), see Remark \ref{rem:poly} for details.

How many polynomials to check? For each $6$-vertex kernel, $|\Vact| \leq 5$ and hence $|\Uc|\leq 31$. So there are at most $\binom{32}{2} = 496$ pairs $U,V$ to consider. In total, we need to check around $100,000$ polynomial inequalities, which can be done with a personal computer in half a minute. 
\end{Rem}

\subsection{Trees}

For trees we use the same approach to show that the minimizing trees have the conjectured kernel. We aim to show that, apart from a small number of exceptions, $\Ga_T<\betr$ happens only if the $10$-kernel of $T$ is $S_5+P_5$. The exact result is the following.
\begin{Thm} \label{thm:10kernel}
Suppose that $T$ is a tree on $n \geq 11$ vertices with master $o$. Then $\Ga_T<\betr$ implies that the $10$-kernel of $T$ is isomorphic to $\big( S_5 +_o P_5, o \big)$ unless $T$ is one of the following three exceptions: 
\[ T=S_6 + P_k, \quad k=5,6,7 . \]
\end{Thm}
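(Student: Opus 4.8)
The plan is to follow the same strategy that proved Theorem~\ref{thm:525}, but adapted to trees (where the situation is actually simpler in one key respect: for a subtree $H$ of $T$, each outside vertex $w$ attaches to $H$ at a single vertex, so $|\partial w| = 1$, and thus every set in $\Uc_H^T$ is a singleton). First I would dispose of the cases $\la_T \le 2$: by Theorem~\ref{lambda2gamma}, among trees with $\la_T \le 2$ we have the paths $P_n$, the graphs $D_n$, the bi-forks $\hat D_n$, and the six sporadic graphs, and one checks directly that $\Ga_T \ge \betr = 4 + 2\sqrt 3 \approx 7.464$ fails only for small $n$ (indeed $\Ga_{D_n}, \Ga_{\hat D_n}$ grow linearly and the relevant small values can be read off), so for $n \ge 11$ no such tree has $\Ga_T < \betr$. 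Hence we may assume $\la_T > 2$, which forces $\deg(o) \ge 3$ by Lemma~\ref{prop:la&deg}; and by Theorem~\ref{degree-6}, $\deg(o) = d$ with $d \ge 6$ gives $\Ga_T \ge \min(\be_d, 2\sqrt d + 3) \ge \min(\be_6, 2\sqrt 6 + 3) \approx 5.3$ — wait, that is not above $\betr$, so for trees one needs the degree bound to rule out larger degrees: $\be_d \ge \betr \approx 7.464$ holds once $d \ge 12$ (from the table, $\be_{12} \approx 7.82$), and $2\sqrt d + 3 \ge \betr$ once $d \ge 5$; combining via Lemma~\ref{subgraph-lower-bound}(b) we get $\Ga_T \ge \min(\be_d, 2\sqrt d + 3) \ge \betr$ as soon as $d \ge 12$. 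So it remains to treat $3 \le d \le 11$, and for each such $d$ one takes $(H,o)$ to be the $10$-kernel of $T$ — a rooted tree on $10$ vertices with $\deg_H(o) = d$ and, since $H \supseteq N_T[o]$, satisfying the non-domination condition \eqref{eq:no_dom} of Proposition~\ref{prop:no_dom}.

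Next I would enumerate, up to rooted isomorphism, all rooted trees $(H,o)$ on $10$ vertices that are connected (automatic), have $\deg_H(o) \ge 3$, and satisfy \eqref{eq:no_dom}; this is the tree-analogue of the "$155$ kernels" list and will again be a list of some hundreds of candidates, produced by a short computer search. For each candidate kernel, the active set $\Vact$ is defined exactly as in \eqref{eq:Vact} (vertices of $H$, other than $o$ and — when $H$ reaches distance $3$ — other than $N_T[o]$, that can carry outside neighbors), and since $T$ is a tree each outside neighbor attaches at one active vertex, so $T$ is a $\Uc$-extension of $(H,o)$ with $\Uc = \{\{v\} : v \in \Vact\}$, the set of singletons of active vertices. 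One then applies Theorem~\ref{thm:Ga_xt_bound_spec}: for every singleton pair $U = \{u\}, V = \{v\}$ with $u,v \in \Vact$, check that
\[
\frac{(s_U + 1)(s_V + 1)}{c_{U,V} + \Bt_{o,U}/2 + \Bt_{o,V}/2} \ge \betr
\quad \text{for all } \la \ge \max(\la_U, \la_V),
\]
which by Remark~\ref{rem:poly} reduces to verifying that the shifted polynomial $\Qt_{U,V}(\ka) = Q_{U,V}(\widetilde\la + \ka)$ has all coefficients nonnegative. By Remark~\ref{rem:Gaxt_to_GaG}, since $\la_T > 2$ gives $2\la_T + 3 > 7$ — but $\betr \approx 7.464 > 7$, so here the weaker form of Remark~\ref{rem:Gaxt_to_GaG} is not quite enough; one uses instead the stronger bound $\Ga_T \ge \min(\Ga(\xt), 2\la_T + \tfrac{2}{\la_T^2 - 1} + 3)$ from Lemma~\ref{subgraph-lower-bound}(c), valid because the $10$-kernel certainly contains a vertex at distance $2$ from $o$, and note $2\la_T + \tfrac{2}{\la_T^2-1} + 3 > 7 + \tfrac23 > \betr$ when $\la_T > 2$. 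So a verified bound $\Ga(\xt) \ge \betr$ transfers to $\Ga_T \ge \betr$. (Strictly, one should double-check the arithmetic: $7 + 2/3 \approx 7.667 > 7.464$, so this is fine.)

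For most kernels the computer check passes outright; the expected complication — and the main obstacle — is the handful of "exceptional kernels" for which condition \eqref{eq:coeff_check} fails for one specific singleton $U = V = \{v\}$, where $v$ is a particular active vertex (typically a leaf of $H$ that lies on the emerging pendant path). As in the graph case, I would handle these by the \texttt{two-step verification}: first apply Theorem~\ref{thm:Ga_xt_bound_spec} with $\Uc' = \Uc \setminus \{\{v\}\}$ to cover all extensions $T$ having no outside vertex $w$ with $\partial w = \{v\}$; then, for extensions that do have such a $w$, enlarge the kernel to $H^+ = H +_v P_1$ on $11$ vertices (legitimate since $n \ge 11$) and apply the theorem again with $\Uc^+$ the singletons of the enlarged active set. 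This leaves only the finitely many trees $T$ equal to $H^+$ itself (or a few more where two enlargements are needed), for which $\Ga_T$ is computed by hand; these computations will pin down exactly the three exceptions $S_6 + P_k$, $k = 5,6,7$ (and confirm all other exceptional-kernel completions exceed $\betr$). Finally, once we know the $10$-kernel must be $(S_5 +_o P_5, o)$, the same short argument as at the end of Section~\ref{subsec:comp_one} shows: the star $S_5$ has its centre $o$ as unique vertex of degree $\ge 4$ in that kernel, and if any further edge joined a leaf of the star to a vertex outside, one could extract a different $10$-kernel with two high-degree vertices, already excluded — so the conclusion follows. The genuinely delicate point throughout is ensuring the polynomial-positivity checks are strong enough at the target ratio $\betr$, which sits just above $7$ and hence just above the easy "$2\la + 3$" fallback; this is exactly why one needs the sharper Lemma~\ref{subgraph-lower-bound}(c) and why the bounds coming from Theorem~\ref{thm:Ga_xt_bound} must be essentially tight for the awkward kernels.
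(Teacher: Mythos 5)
Your proposal follows essentially the same route as the paper: rule out $\la_T\le 2$ via Theorem~\ref{lambda2gamma}, bound $\deg(o)$, enumerate the $10$-vertex rooted tree kernels, exploit that every $\partial w$ is a singleton, run the polynomial checks of Theorem~\ref{thm:Ga_xt_bound_spec}/Remark~\ref{rem:poly} at $\be=\betr$, transfer to $\Ga_T$ via Lemma~\ref{subgraph-lower-bound}(c) (you correctly spot that $2\la+3>7$ is not enough here), and handle the few recalcitrant kernels by enlarging them leaf-by-leaf, which is exactly the paper's \texttt{active-vertex-elimination}; in particular the $S_6$ kernel has to be grown through $11$, $12$ and $13$ vertices, which is covered by your parenthetical ``a few more enlargements,'' and the exceptions $S_6+P_k$, $k=5,6,7$, arise precisely as the single trees checked along the way.

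One concrete gap: your setup assumes throughout that the $10$-kernel satisfies $H\supseteq N_T[o]$ (you use this to invoke Proposition~\ref{prop:no_dom} and, implicitly, to exclude $o$ from the active set), but your own degree bound only eliminates $d\ge 12$ (since $\be_{10}\approx 7.06$ and $\be_{11}\approx 7.45$ are below $\betr$), so the cases $d\in\{10,11\}$ survive, and there the $10$-kernel is the star $S_{10}$, which does \emph{not} contain all neighbors of $o$; in that case $o$ itself may have outside neighbors, so $\{o\}$ must be allowed in $\Uc$ (the paper handles exactly this in a footnote by setting $\Vact=V(H)$ for the star kernel). Relatedly, your justification of Lemma~\ref{subgraph-lower-bound}(c) — ``the $10$-kernel certainly contains a vertex at distance $2$'' — fails when $d\ge 9$; what is actually needed is a distance-$2$ vertex in $N_T[H]$, which holds unless $T$ is itself a star, a case one disposes of directly as in the paper ($\Ga_{S_n}\ge 8$ for $n\ge 10$). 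Also note that the active-set formula \eqref{eq:Vact} you quote is tailored to $6$-vertex kernels of radius at most $3$; for $10$-vertex tree kernels the paper instead takes the last two distance layers. Using your larger active set is still sound but would trigger additional spurious check failures, so the computer verification should be run with the layer-based $\Vact$. These are fixable bookkeeping issues rather than flaws in the method.
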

\begin{proof}
As in the case of connected graphs, we may assume that $\la_T>2$ because Theorem~\ref{lambda2gamma} lists $\Ga_G$ for all connected graphs $G$ with $\la_G \leq 2$, and $\Ga_G > 9.393 > \betr$ for every such $G$ with $|V(G)| \geq 11$.

By Lemma~\ref{prop:la&deg} it follows that $d := \deg_T(o) \geq 3$.

Now let $(H,o)$ denote a $10$-kernel of $T$. We need to iterate through all possible kernels $(H,o)$. This time they have the following properties:
\begin{itemize}
\item $H$ is a tree;
\item $|V(H)|=10$;
\item $\deg_H(o) \geq 3$.
\end{itemize}
Up to (rooted) isomorphism, there are $194$ such rooted trees.

For a given kernel $(H,o)$, we define the set of active vertices as follows:
\[ \Vact := \big\{ v \in V(H) \, : \, \dist(o,v) \geq \ell - 1 \big\}, \quad
\text{where } \ell := \max_{v\in V(H)} \dist(o,v) .\] 
It is easy to see that only active vertices may have neighbors outside $H$, see Remark \ref{layers} for details.\footnote{We point out that here it may happen that $H$ does not contain all neighbors of $o$. If this happens, then $H$ must be the star graph $S_{10}$. Then $\Vact = V(H)$, and the root is active and may have further neighbors outside $H$. (Had we considered $13$-kernels, we could assume $o \notin \Vact$, because $\Ga_T<\bestar$ implies that $\deg(o)\leq 12$ by Theorem~\ref{degree-6}.)} Moreover, every $\partial w$ must be a one-element set. Indeed, if $v_1, v_2 \in V(H)$ were both adjacent to the same outside vertex $w \notin V(H)$, then the path in $H$ connecting $v_1$ and $v_2$ could be extended into a cycle, contradicting that $T$ is a tree. Consequently, $T$ is a $\Uc$-extension of $(H,o)$ for 
\[ \Uc := \big\{ \{v\} \, : \, v \in \Vact \big\} . \]
Compared to the first conjecture, here we need to consider larger kernels (of order $10$ versus $6$). So it is crucial for us that we do not need to consider all nonempty subsets of $\Vact$, only the singletons. This way the total number of polynomial inequalities to check remains well under $10,000$ in this proof. A personal computer can do this within seconds.

Now we are ready to start applying Theorem~\ref{thm:Ga_xt_bound_spec} for the different kernels $(H,o)$. First we argue that we may assume that $N_T[H]$ contains a vertex at distance $2$ from $o$. If $T$ has no such vertex at all, then $T$ is the star graph $S_n$ for which $\Ga_T=\big(\sqrt{n-1}+1\big)^2/2 \geq 8$ if $n\geq 10$. 
For any other tree $T$, we can choose the $10$-kernel $(H,o)$ in such a way that $N_T[H]$ contains at least one vertex at distance $2$ from $o$. Therefore, according to Remark~\ref{rem:Gaxt_to_GaG}, the bounds $\Ga(\xt) \geq \betr$ obtained by Theorem~\ref{thm:Ga_xt_bound_spec} will automatically imply $\Ga_T \geq \betr$ as well.

We can rule out most of the kernels by a direct application of Theorem~\ref{thm:Ga_xt_bound_spec} as follows. 

\bigskip

\noindent\fbox{\begin{minipage}{0.98\textwidth}
\textbf{Computer check for $191$ kernels:} condition \eqref{eq:coeff_check} verified for 
\[ (H,o); \quad 
\Uc=\big\{ \{v\} \, : \, v \in \Vact \big\}; \quad
\be=\betr. \]
\end{minipage}}
\bigskip

There are three kernels $(H,o)$ for which the above simple method would fail and hence more careful considerations are required:\footnote{The root is indicated by a red square.}
\begin{itemize}
\item Case 1: $\quad \treekernelC \, \cong \big( (S_4 +_o P_2) +_o P_4 , o \big)$
\item Case 2: $\quad \treekernelB \, \cong \big( S_6 +_o P_4 , o \big)$
\item Case 3: $\quad \treekernelA \, \cong \big( S_5 +_o P_5 , o \big)$
\end{itemize}

We call the following process \texttt{active-vertex-elimination}. We pick an active vertex $u \in \Vact$ and add a leaf $w$ to $H$ at $u$, that is, we take $H' := H +_u P_1$ and define its active vertices as $\Vact' := \Vact \cup \{ w \}$. Then we perform the computer check for this new system:
\[ (H',o); \quad 
\Uc=\big\{ \{v\} \, : \, v \in \Vact' \big\}; \quad
\be=\betr . \]
If the conditions are verified for this system, then we may remove $u$ from the set of active vertices $\Vact$ and proceed with this smaller $\Vact$ for the original kernel $(H,o)$. (Additionally, we need to check $\Ga_{H'}$ for the single tree $H'$.) If we manage to remove, one by one, all active vertices so that $\Vact$ becomes $\emptyset$, then we can rule this kernel out. This happens in Case 1. In that case, not even the potential single trees $H'$ provide examples for $\Ga_T < \betr$.

In Case 2, after the \texttt{active-vertex-elimination} process, $\Vact$ does not get empty, however. Instead, one element remains in $\Vact$, namely the endpoint of the pendant path. So the only way this kernel may be extended (with any chance of resulting in a tree $T$ with $\Ga_T < \betr$) is through that endpoint. Consequently, we can replace this kernel with $\big( S_6 +_o P_5 , o \big)$ and try the \texttt{active-vertex-elimination} for this larger kernel. The same thing happens: only one vertex remains in $\Vact$, and hence we can move on to $\big( S_6 +_o P_6 , o \big)$. Once again, we conclude that the only way to extend this $12$-kernel is through the endpoint. Finally, this procedure stops at the $13$-kernel $\big( S_6 +_o P_7 , o \big)$, for which $\Vact$ gets empty through our \texttt{active-vertex-elimination} process. Of course, we have to check the potential single trees along the way. In this case three of them happen to have $\Ga_T$ below $\betr$:
\begin{itemize}
\item $T=S_6 + P_5: \quad \Ga_T \approx 7.3371$;
\item $T=S_6 + P_6: \quad \Ga_T \approx 7.4158$;
\item $T=S_6 + P_7: \quad \Ga_T \approx 7.4571$.
\end{itemize}
Therefore, unless $T$ is one of these trees, it follows that $\Ga_T<\betr$ implies that we must be in Case 3, meaning that the $10$-kernel of $T$ is $\big( S_5 +_o P_5 , o \big)$, as claimed. 
\end{proof}

As in Theorem \ref{thm:525} (in the case of connected graphs), 
we could have used some target ratio $\beta$ somewhat above the limiting ratio $\betr$ and the same approach would yield similar results but with further exceptional kernels and single trees.

\section{Graphs with long pendant paths}
\label{sec:pendant_paths}

In this section we study $\la_G$ and $\Ga_G$ for graphs with long pendant paths, both in the finite setting $G=H +_v P_k$ and in the infinite setting $G=H +_v P_{\infty}$, where $H$ is a finite connected graph and $v$ a vertex of $H$. The main goal is to prove that $\Ga_{H +_v P_k} < \Ga_{H +_v P_\infty}$ under mild conditions, see Lemma~\ref{lem:Gamma-lower}. Then we will check these conditions for the relevant special cases ($H=K_4$ and $H=S_5$), see Theorem~\ref{thm:indeed_below}. 

\subsection{Eigenvectors on pendant paths} 
\label{sec:ev_on_path} 
We start with the following simple observation that we will repeatedly use in the remainder of the paper.

Suppose that a connected graph $G$ contains a path $w_0 w_1 \ldots w_{k-1}$ in such a way that $\deg_G(w_i)=2$ for $1 \leq i \leq k-2$, that is, the inner vertices of the path have no further neighbors in $G$. Consider an eigenvector of the graph with eigenvalue $\la>2$ and let $x_i$ denote the coordinate corresponding to $w_i$. For $2 \leq i \leq k-1$, we have $x_i - \la x_{i-1} + x_{i-2} = 0$ by the eigenvalue equation at $w_{i-1}$. Therefore, this is a linear recurrence of order $2$ (with constant coefficients). So one needs to solve the characteristic equation $q^2-\la q + 1 = 0$. For $\la>2$ it has two distinct roots:
\begin{equation} \label{eq:q}
r(\la):=\frac{\la+\sqrt{\la^2-4}}{2}>1 
\quad \text{and} \quad 
\frac{1}{r(\la)}=\frac{\la-\sqrt{\la^2-4}}{2}<1 .
\end{equation}
Since $r(\la)$ is the larger root, we have 
\begin{equation} \label{eq:q_bounds}
\la=r(\la)+\frac{1}{r(\la)}<2r(\la)
\text{, and hence }
\frac{1}{r(\la)}<\frac{2}{\la} .
\end{equation}
We know that the linear recurrence is satisfied by sequences of the following form: $x_i = \alpha r(\la)^i + \beta r(\la)^{-i}$. It is convenient to normalize the eigenvector such that $x_0=1$, in which case $\be=1-\al$ and we have 
\[ x_i= \alpha r(\la)^i + (1-\alpha) r(\la)^{-i} 
\quad (i=0,1,\ldots,k-1) \]
for some constant $\alpha$.

Depending on what the rest of the graph looks like, we may have additional equations that give us more information about  $\la$ and $\alpha$. For instance, if $w_{k-1}$ is a leaf of the graph, that is, $w_0 w_1 \ldots w_{k-1}$ is a \textbf{pendant path}, then 
\[ 0 = \al r(\la)^{k}+(1-\al) r(\la)^{-k} , \]
because we can view this as if there were a vertex $w_k$ with $x_k=0$. We conclude that 
\begin{equation} \label{eq:alpha}
\alpha=\frac{-r(\la)^{-k}}{r(\la)^k-r(\la)^{-k}}. 
\end{equation}
Furthermore, if we have an \textbf{infinite pendant path}, then the only way $x_i$ remains bounded is if $\al=0$, in which case $x_i=r(\la)^{-i}$ for every $i \geq 0$.

According to Lemma~\ref{lem:inside_weights}, if an induced subgraph $H$ of $G$ has only one vertex $v$ with outside neighbors, then for the Perron vector $\x$ of $G$ we have 
\[ x_u = B_{u,v}(\la_G) \]
provided that we have $y_v=1$ for the sum of the weights of the outside neighbors of $v$. For instance, in the setting of pendant paths (i.e., $G=H+_v P$), $v$ has one outside neighbor and we always assume that its weight is $1$. We will use the following form of the observations above. 
\begin{Prop} \label{prop:HB}
Suppose that $H$ is a connected graph and let $B=(\la I - H)^{-1}$. For a fixed $v \in V(H)$ and for any $\la>\la_H$ the vector $x_u:=B_{u,v}(\la)$, $u\in V(H)$, satisfies the eigenvalue equation with $\la$ at each vertex of $H$ except for $v$, where we have 
\[ \la x_v = 1 +\sum_{u \in N_H(v)} x_u .\]
\end{Prop}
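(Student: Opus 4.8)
The plan is to obtain Proposition~\ref{prop:HB} directly from the defining identity for the resolvent matrix, so that essentially no work is required. Since $\la > \la_H$, the matrix $B = (\la I - A_H)^{-1}$ is well-defined (Proposition~\ref{prop:B_properties}), and hence $(\la I - A_H)B = I$. Writing $e_v$ for the standard basis vector supported on the single vertex $v$, the vector $\x = (x_u)_{u \in V(H)}$ with $x_u := B_{u,v}$ is exactly the $v$-th column of $B$, that is, $\x = B e_v$; consequently $(\la I - A_H)\x = e_v$.

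The only remaining step is to read off the $u$-th coordinate of this vector identity. Since $(A_H\x)_u = \sum_{w \in N_H(u)} x_w$, we get
\[ \la x_u - \sum_{w \in N_H(u)} x_w = (e_v)_u , \]
and $(e_v)_u$ equals $1$ when $u = v$ and $0$ otherwise. For $u \neq v$ this is precisely the eigenvalue equation with eigenvalue $\la$ at $u$, and for $u = v$ it rearranges to $\la x_v = 1 + \sum_{w \in N_H(v)} x_w$, which is the claimed identity.

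As an alternative route, one may simply specialize Lemma~\ref{lem:inside_weights}: regard $H$ as an induced subgraph of a graph $G$ in which the only outside neighbor of $H$ is a single vertex attached to $v$ and carrying weight $1$, so that the vector $\y$ there equals $e_v$; then equation~\eqref{eq:ev_GH} becomes $\la x_u = (A_H\x\big|_{H})_u + \delta_{u,v}$, giving the same conclusion. There is no genuine obstacle in this proposition — the content is entirely an unwinding of the definition of $B$ — so I would write out only the first, two-line argument.
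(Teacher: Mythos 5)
Your two-line argument is correct and is essentially the paper's own justification: the paper states Proposition~\ref{prop:HB} without a separate proof, presenting it as an immediate reformulation of the resolvent identity behind Lemma~\ref{lem:inside_weights}, which is exactly your reading of the $v$-th column of $(\la I - A_H)B = I$. Your alternative route via Lemma~\ref{lem:inside_weights} is also fine in spirit, though the direct computation is preferable since it works for every $\la > \la_H$ rather than only for $\la = \la_G$ of some ambient graph.
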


\subsection{Infinite pendant path}
For an infinite graph $G$, the adjacency operator $A_G$ can be considered as a self-adjoint $\ell^2\big( V(G) \big) \to \ell^2\big( V(G) \big)$ operator. If $G$ has bounded degrees, then $A_G$ is a bounded operator and $\| A_G \| = \sup \, \sigma(A_G)$, where $\sigma(A_G) \subset \mathbb{R}$ is the spectrum of $A_G$. Next we show that for $G = H +_v P_\infty$ with $\la_H\geq 2$, $\| A_G \|$ actually lies in the point spectrum (i.e., it is an eigenvalue) and we describe the corresponding eigenvector. In these cases it is therefore justified to use the notation $\la_G := \| A_G \|$ for this top eigenvalue.
\begin{Lemma} \label{lem:la_infty}
Let $H$ be a connected finite graph with $\la_H \geq 2$, and $v \in V(H)$ one of its vertices.\footnote{If $\la_H < 2$, then one may replace $H$ with $H+_v P_\ell$. Unless $H \cong P_r$ or $H \cong D_r$,  $\la_{H+_v P_\ell}$ becomes larger than $2$ for some $\ell$ (see Section \ref{sec:lambda_at_most_2}), and the lemma may be applied to that graph instead.} Let $B=B_H(\la)$ be the matrix defined in Section~\ref{sec:resolvent}.

Consider $H +_v P_\infty$ and denote the vertices of the pendant path $P_\infty$ by $w_i$, $i \geq 0$. Let $r(\la)$ be the function defined in \eqref{eq:q}. Viewing the diagonal entry $B_{v,v}(\la)$ also as a function of $\la > \la_H$, let $\la_{\infty}$ be the (unique) solution of the equation 
\begin{equation} \label{eq:la_inf_eq}
 B_{v,v}(\la) = r(\la) .
\end{equation}
Define the vector $\x$ over the vertices as follows:
\begin{equation} \label{eq:x_inf}
x_u := 
\begin{cases}
B_{u,v}(\la_{\infty}) & \text{ if } u \in V(H); \\
r(\la_{\infty})^{-i}  & \text{ if } u = w_i. 
\end{cases}
\end{equation}
Then $\x \in \ell^1 \subset \ell^2$, and $\x$ is an eigenvector of $H +_v P_\infty$ with eigenvalue $\la_{\infty}$. Since each $x_u$ is positive, $\la_{\infty}$ must be the top of the spectrum, and we may denote it by $\la_{H +_v P_{\infty}}$.
\end{Lemma}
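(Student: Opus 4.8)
The plan is to prove the three assertions in order: existence and uniqueness of the root $\la_\infty$ of \eqref{eq:la_inf_eq}; that the vector $\x$ in \eqref{eq:x_inf} lies in $\ell^1$ and is an eigenvector with eigenvalue $\la_\infty$; and that $\la_\infty$ is the top of the spectrum. For the first point I would argue by monotonicity. On $(\la_H,\infty)$ the function $\la\mapsto B_{v,v}(\la)$ is continuous and strictly decreasing by Proposition~\ref{prop:B_properties}(c); by part~(e) of that proposition $(\la-\la_H)B_{v,v}(\la)\to (\x_H)_v^2>0$ as $\la\searrow\la_H$, so $B_{v,v}(\la)\to+\infty$, while by part~(f) $B_{v,v}(\la)\to 0$ as $\la\to\infty$. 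On the other hand $r(\la)=\tfrac12(\la+\sqrt{\la^2-4})$ is continuous and strictly increasing on $[2,\infty)\supseteq(\la_H,\infty)$ with $r(\la)\to\infty$. Hence $g(\la):=B_{v,v}(\la)-r(\la)$ is strictly decreasing on $(\la_H,\infty)$, runs from $+\infty$ to $-\infty$, and so \eqref{eq:la_inf_eq} has a unique solution $\la_\infty\in(\la_H,\infty)$; in particular $\la_\infty>\la_H\geq 2$, hence $r(\la_\infty)>1$.

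\textbf{Step 2 (eigenvector and $\ell^1$).} Next I would check the eigenvalue equation of $G:=H+_vP_\infty$ at each vertex, using the vector $\x$ from \eqref{eq:x_inf} with $\la=\la_\infty$. At $u\in V(H)\setminus\{v\}$ the neighbourhood is the same as in $H$, so Proposition~\ref{prop:HB} gives the equation immediately. At $v$ the only new neighbour is $w_0$ with $x_{w_0}=r(\la_\infty)^{0}=1$, so the required identity $\la_\infty x_v=x_{w_0}+\sum_{u\in N_H(v)}x_u$ is exactly the relation supplied by Proposition~\ref{prop:HB}. At $w_0$ we need $\la_\infty x_{w_0}=x_v+x_{w_1}$, i.e. $\la_\infty=B_{v,v}(\la_\infty)+r(\la_\infty)^{-1}=r(\la_\infty)+r(\la_\infty)^{-1}$ by \eqref{eq:la_inf_eq}, and this holds because $r(\la)$ is a root of $q^2-\la q+1=0$ (cf.\ \eqref{eq:q_bounds}). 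At $w_i$ with $i\geq1$ the equation becomes, after dividing by $r(\la_\infty)^{-i}$, the same identity $\la_\infty=r(\la_\infty)+r(\la_\infty)^{-1}$. Finally $\x\in\ell^1$: its restriction to the finite set $V(H)$ contributes finitely, and $\sum_{i\geq0}x_{w_i}=\sum_{i\geq0}r(\la_\infty)^{-i}<\infty$ since $r(\la_\infty)>1$; thus $\x\in\ell^1\subset\ell^2$, and all entries are positive by Proposition~\ref{prop:B_properties}(b) and $r(\la_\infty)>0$.

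\textbf{Step 3 (top of the spectrum) and the main obstacle.} Here I would use a ground-state transformation valid for infinite bounded-degree graphs. Put $\mu:=\|A_G\|=\sup\sigma(A_G)$; clearly $\mu\geq\la_\infty$. For the reverse inequality take any finitely supported nonnegative $\psi$, set $h_v:=\psi_v/x_v$ (finitely supported, well defined since $x_v>0$), and expand, using $2h_uh_v=h_u^2+h_v^2-(h_u-h_v)^2$,
\[ \langle A_G\psi,\psi\rangle=\sum_{\{u,v\}\in E}(h_u^2+h_v^2)x_ux_v-\sum_{\{u,v\}\in E}(h_u-h_v)^2x_ux_v . \]
Since $\sum_{u\sim v}x_u=\la_\infty x_v$ for every $v$ (because $\x$ is an eigenvector), the first sum equals $\la_\infty\sum_v h_v^2x_v^2=\la_\infty\|\psi\|_2^2$, and the second is nonnegative, so $\langle A_G\psi,\psi\rangle\leq\la_\infty\|\psi\|_2^2$. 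As $A_G$ has nonnegative entries, $\langle A_G\eta,\eta\rangle\leq\langle A_G|\eta|,|\eta|\rangle$ for every $\eta$, and finitely supported vectors are dense in $\ell^2$; hence the Rayleigh quotient of $A_G$ is bounded above by $\la_\infty$, giving $\mu\leq\la_\infty$ and therefore $\mu=\la_\infty$. The main obstacle is precisely this last point: for finite graphs it would follow at once from Perron--Frobenius, but for the infinite graph $H+_vP_\infty$ one genuinely needs the transformation above (or, equivalently, a comparison with the finite truncations $H+_vP_k$) to exclude the possibility that $\la_\infty$ lies strictly below $\sup\sigma(A_G)$; the remaining steps are routine monotonicity and a direct substitution.
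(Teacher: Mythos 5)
Your proof is correct, and for the parts the paper actually proves it follows the same route: uniqueness of $\la_\infty$ from the strict monotonicity and limiting behaviour of $B_{v,v}$ versus $r$ (Proposition~\ref{prop:B_properties}(c),(e),(f)), verification of the eigenvalue equation vertex by vertex via Proposition~\ref{prop:HB} and the identity $r+1/r=\la$, and $\ell^1$ membership from $r(\la_\infty)>1$. The one place you genuinely go beyond the paper is the last assertion: the paper simply states that positivity of $\x$ forces $\la_\infty$ to be the top of the spectrum (an infinite-dimensional Perron--Frobenius-type fact it leaves implicit), whereas you prove it via the ground-state transformation $h_v=\psi_v/x_v$, showing $\langle A_G\psi,\psi\rangle\le\la_\infty\|\psi\|_2^2$ for finitely supported nonnegative $\psi$ and extending by density and nonnegativity of the entries. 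That argument is sound (the sums involved are finite because $\psi$ has finite support and $G$ has bounded degree, and the first sum collapses exactly because $\x$ satisfies the eigenvalue equation at every vertex), so your write-up is in fact more self-contained than the paper's on this point, at the cost of a slightly longer proof; the paper instead relies on the reader accepting the standard fact about positive $\ell^2$ eigenvectors of connected nonnegative self-adjoint operators.
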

\begin{proof}
By Proposition~\ref{prop:B_properties}, $B_{v,v}(\la)$ is a strictly monotone decreasing continuous function on $(\la_H,\infty)$ with   
\[ \lim_{\la \searrow \la_H} B_{v,v}(\la) = \infty 
\text{ and }
\lim_{\la \to \infty} B_{v,v}(\la) = 0 .\]
Since $r$ is a strictly monotone increasing continuous function on the same interval with $r(\la_H)\geq r(2) = 1$, it follows that the equation \eqref{eq:la_inf_eq} has a unique solution that we denote by $\la_{\infty} > \la_H \geq 2$. Since $r(\la_{\infty})>r(2)=1$, the vector $\x$ defined in \eqref{eq:x_inf} decays exponentially along the pendant path, and hence it is indeed in $\ell^1$. By design it satisfies the eigenvalue equation at each vertex other than $w_0$. (See Section~\ref{sec:ev_on_path}, and in particular Proposition~\ref{prop:HB}.) As for $w_0$, the eigenvalue equation holds precisely because $\la_{\infty}$ is a root of \eqref{eq:la_inf_eq}.
\end{proof}

\subsection{Eigenvalue sandwich}
\begin{Def}
Consider the path graph $P_k$ with vertices $w_0,w_1,\ldots,w_{k-1}$. Add $\ell \geq 2$ new vertices, each connected to $w_{k-1}$. We will denote the resulting ``fork'' graph by $F_{k,\ell}$. For instance, $F_{5,2} = \, \PPPPPFF$. We will use the notation $H +_v F_{k,\ell}$ when $H$ and $F_{k,\ell}$ are joined by a single edge $e=(v,w_0)$. 
\end{Def}
Next we prove that $\la_{H +_v P_\infty}$ is always ``sandwiched'' between $\la_{H +_v P_k}$ and $\la_{H +_v F_{k,2}}$.
\begin{Lemma} \label{lem:lambda_sandwich}
We have
\[ \la_{H +_v P_k} < \la_{H +_v P_\infty} < \la_{H +_v F_{k,2}} .\]
\end{Lemma}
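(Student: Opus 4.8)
The plan is to reduce each of $\la_{H+_v P_k}$, $\la_{H+_v P_\infty}$, $\la_{H+_v F_{k,2}}$ to the root of a scalar equation of the form $B_{v,v}(\la) = \psi(\la)$, where $B = B_H(\la)$ is the resolvent matrix of $H$ and $\psi$ records ``how much weight the pendant structure pushes back onto the attaching vertex $w_0$'', and then to compare these three $\psi$'s against the function $r(\la)$ from \eqref{eq:q}, using the monotonicity already recorded in (the proof of) Lemma~\ref{lem:la_infty}.

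First I would derive these scalar equations. Write the pendant structure as attached to $H$ via the edge $(v,w_0)$. In each of the three graphs $G$, the graph $H$ is a proper connected induced subgraph, so $\la_G>\la_H\ge 2$ (hence $\la_G>2$), and $v$ has a unique neighbor outside $H$, namely $w_0$; thus by Lemma~\ref{lem:inside_weights} the Perron vector $\x$ of $G$ satisfies $\x\big|_H = x_{w_0}\cdot B_{\cdot,v}(\la_G)$, and the eigenvalue equation at $w_0$ reads $\la_G x_{w_0} = x_v + x_{w_1} = x_{w_0}B_{v,v}(\la_G)+x_{w_1}$. Dividing by $x_{w_0}>0$ gives $B_{v,v}(\la_G) = \la_G - \mu(\la_G)$ with $\mu := x_{w_1}/x_{w_0}$. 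By the pendant-path analysis of Section~\ref{sec:ev_on_path} (with $r=r(\la_G)>1$): for $P_\infty$ we have $x_i=r^{-i}$, so $\mu=1/r$ and the equation becomes $B_{v,v}(\la)=\la-1/r=r(\la)$, exactly \eqref{eq:la_inf_eq}; for $P_k$ the pendant condition $x_k=0$ forces $x_i=(r^{k-i}-r^{-(k-i)})/(r^k-r^{-k})$, so $\mu=\mu^{\mathrm{p}}_k:=(r^{k-1}-r^{-(k-1)})/(r^k-r^{-k})$; for $F_{k,2}$, symmetry makes the two end-leaves carry equal weight, giving the boundary relation $x_{k-2}=(\la-2/\la)x_{k-1}$, and solving $x_i=\alpha r^i+\beta r^{-i}$ under that condition yields $\beta=\alpha r^{2k}$, whence $\mu=\mu^{\mathrm{f}}_k:=(r+r^{2k-1})/(1+r^{2k})$.

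The crux is then the elementary pair of inequalities, valid for every $r>1$ (equivalently every $\la>2$): $\mu^{\mathrm{p}}_k < 1/r < \mu^{\mathrm{f}}_k$, each of which reduces, after clearing the (positive) denominators, to $r^2>1$. Hence, writing $\psi^{\mathrm{p}}_k(\la):=\la-\mu^{\mathrm{p}}_k$ and $\psi^{\mathrm{f}}_k(\la):=\la-\mu^{\mathrm{f}}_k$, we get $\psi^{\mathrm{f}}_k(\la) < r(\la) < \psi^{\mathrm{p}}_k(\la)$ for all $\la>2$ — morally, a finite path leaks weight out of its far end while a fork reflects extra weight back. Now recall from Lemma~\ref{lem:la_infty} that $B_{v,v}(\cdot)$ is strictly decreasing and $r(\cdot)$ strictly increasing on $(\la_H,\infty)$, crossing exactly once at $\la_\infty:=\la_{H+_v P_\infty}$, so $B_{v,v}(\la)>r(\la)$ for $\la_H<\la<\la_\infty$ and $B_{v,v}(\la)<r(\la)$ for $\la>\la_\infty$. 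Since $\la_{H+_v P_k}\in(\la_H,\infty)$ solves $B_{v,v}=\psi^{\mathrm{p}}_k$, while $B_{v,v}(\la)\le r(\la)<\psi^{\mathrm{p}}_k(\la)$ for all $\la\ge\la_\infty$, that root must lie below $\la_\infty$; and since $\la_{H+_v F_{k,2}}\in(\la_H,\infty)$ solves $B_{v,v}=\psi^{\mathrm{f}}_k$, while $B_{v,v}(\la)\ge r(\la)>\psi^{\mathrm{f}}_k(\la)$ for all $\la\in(\la_H,\la_\infty]$, that root must lie above $\la_\infty$. This is exactly $\la_{H+_v P_k}<\la_{H+_v P_\infty}<\la_{H+_v F_{k,2}}$.

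The main obstacle is the fork computation — correctly turning the two-leaf boundary condition into $\beta=\alpha r^{2k}$ and thence into the closed form for $\mu^{\mathrm{f}}_k$ — together with the small amount of care needed to be sure that each of the three top eigenvalues genuinely satisfies its scalar equation ($G$ connected, $\la_G>\la_H$, $x_{w_0}>0$, and $N_G(v)\setminus V(H)=\{w_0\}$). Once the two reflection inequalities $\mu^{\mathrm{p}}_k<1/r<\mu^{\mathrm{f}}_k$ are in hand, everything else is a soft monotonicity argument requiring no further estimates.
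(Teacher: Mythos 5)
Your proof is correct, and while it shares the paper's basic mechanism---locating each top eigenvalue as the unique crossing of the strictly decreasing resolvent entry $B_{v,v}(\la)$ with a comparison function, and then comparing those functions using $r(\la)$---the decomposition is genuinely different. The paper never computes the transfer ratio of the whole pendant path or fork: it works with the two elementary comparison functions $1/\la$ and $2/\la$ (one leaf, respectively two leaves, attached at a vertex), deduces $\la_{H'}<\la_{H+_vP_\infty}<\la_{H''}$ from $1/\la<1/r(\la)<2/\la$, and then obtains the full statement by the substitution trick of replacing $(H,v)$ with $(H+_vP_k,\ \text{endpoint of }P_k)$, under which $H'$ becomes $H+_vP_{k+1}$, $H''$ becomes $H+_vF_{k,2}$, and $H+_vP_\infty$ is unchanged. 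You instead keep the resolvent of $H$ itself fixed and push the entire pendant structure into the right-hand side, solving the second-order recurrence with the leaf (resp.\ fork) boundary condition to obtain the closed forms $\mu^{\mathrm p}_k=(r^{k-1}-r^{1-k})/(r^k-r^{-k})$ and $\mu^{\mathrm f}_k=(r+r^{2k-1})/(1+r^{2k})$; your inequalities $\mu^{\mathrm p}_k<1/r<\mu^{\mathrm f}_k$ (each reducing to $r^2>1$) play exactly the role of the paper's $1/\la<1/r<2/\la$. I verified the fork computation $\beta=\alpha r^{2k}$ and the final crossing-point argument; both are sound, and the use of Lemma~\ref{lem:inside_weights} and of $\la_G>\la_H\ge 2$ is legitimate in all three graphs. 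The only cosmetic caveat is $k=1$, where the equation at $w_0$ has no $x_{w_1}$ term (resp.\ reads $\la x_0=x_v+2x_{\mathrm{leaf}}$), though your formulas $\mu^{\mathrm p}_1=0$ and $\mu^{\mathrm f}_1=2/\la$ still give the correct scalar equations. The price of your route is the explicit boundary-value computation; what it buys is the exact characteristic equations for $\la_{H+_vP_k}$ and $\la_{H+_vF_{k,2}}$, which the paper's shift trick deliberately avoids ever writing down.
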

\begin{proof}
Let $f:(\la_H,\infty) \to (0,\infty)$ be a monotone decreasing continuous function and consider the equation $B_{v,v}(\la) f(\la)=1$. 
Note that by Proposition \ref{prop:B_properties} 
\[ \lim_{\la \searrow \la_H} B_{v,v}(\la) f(\la) = \infty 
\text{ and }
\lim_{\la \to \infty} B_{v,v}(\la) f(\la) = 0 .\]
Since $B_{v,v}f$ is strictly monotone decreasing and continuous, it follows that there is a unique solution of the equation that we will denote by $\la_f$. 

Note that if $f(\la) < g(\la)$ for all $\la > \la_H$, then we clearly have $\la_f < \la_g$.

Furthermore, we have seen in Lemma~\ref{lem:la_infty} that for $f=1/r$ we get $\la_{1/r}=\la_{H +_v P_\infty}$. 

Two other functions that correspond to eigenvalues of certain graphs are $f(\la):=1/\la$ and $g(\la):=2/\la$. Indeed, if $H'$ (and $H''$) denote the graphs obtained from $H$ by adding a leaf (two leaves, respectively) at $v$, then it is easy to see that 
\[ \la_f = \la_{H'} 
\quad \text{and} \quad 
\la_g = \la_{H''} .\]
By \eqref{eq:q_bounds} we have the following inequalities for each $\la > 2$:
\[ f(\la) < 1 \big/ r(\la) < g(\la) .\]
It follows that 
\[ \la_f < \la_{1/r} < \la_g, \]
that is, 
\[ \la_{H'} < \la_{H +_v P_\infty} < \la_{H''}. \]
If we replace $H$ with $H +_v P_k$ and $v$ with the other endpoint of the pendant path $P_k$, then the previous inequality turns into  
\[ \la_{H +_v P_{k+1}} < \la_{H +_v P_{\infty}} < \la_{H +_v F_{k,2}} ,\]
and the proof is complete.
\end{proof}

\subsection{Complete graphs and stars} \label{sec:KS}
Next we compute $\la_G$ and $\Ga_G$ for $G=K_p+P_\infty$ and $G=S_p+P_\infty$.

For the complete graph $H=K_p$ the matrix $B$ has the following entries:
\[ B_{u,v}(\la) = 
\begin{cases}
\frac{\la-(p-2)}{(\la+1)\big(\la-(p-1)\big)} & \text{if } u=v;\\
\frac{1}{(\la+1)\big(\la-(p-1)\big)} & \text{if } u \neq v.
\end{cases}\]
According to Lemma~\ref{lem:la_infty}, by solving the equation $B_{v,v}(\la)=r(\la)$ we get that   
\[ \la_\infty := \la_{K_p+P_\infty} 
= \frac{p-3}{2}+\frac{p-1}{2(p-2)}\sqrt{p^2-4}. \]
Then 
\[ r\big( \la_{K_p+P_\infty} \big) =\frac{p-2+\sqrt{p^2-4}}{2} ,\]
and hence
\[ \Gamma_{K_p+P_{\infty}}=\frac{(p-1)(2p-3)}{2(2p-5)}+\frac{(p-1)(2p+1)}{2(p+2)(2p-5)}\sqrt{p^2-4}. \]
For $p=4$ we get 

\[ \la_{K_4+P_{\infty}}=\frac{1+3\sqrt{3}}{2}\approx 3.098 
\quad \text{and} \quad
\beta_{\star}=\Ga_{K_4+P_{\infty}}=\frac{5+3\sqrt{3}}{2}\approx 5.098.\]

For the star graph $H=S_p$, let $v$ denote the central vertex. Then the relevant entries of $B$ are:
\[ B_{u,v}(\la) = 
\begin{cases}
\frac{\la}{\la^2-(p-1)} & \text{if } u=v;\\
\frac{1}{\la^2-(p-1)} & \text{if } u \neq v.
\end{cases}\]
Again, we need to solve the equation $B_{v,v}(\la)=r(\la)$. This time it yields
\[ \la_{S_p+P_{\infty}}=\frac{p-1}{\sqrt{p-2}}.\]
Then 
\[ r\big( \la_{K_p+P_\infty} \big)=\sqrt{p-2} ,\]
consequently 
\[ \Gamma_{S_p+P_{\infty}}=\frac{p-1}{2(p-3)}(\sqrt{p-2}+1)^2. \]
In particular, for $p=5$ we get
\[ \la_{S_5+P_{\infty}}= 4 \big/ \sqrt{3} \approx 2.3094
\quad \text{and} \quad 
\beta_{\mathrm{tr}}=\Gamma_{S_5+P_{\infty}}=(\sqrt{3}+1)^2=4+2\sqrt{3}\approx 7.464. \]

Let us mention that for $p=6$ we get $\Gamma_{S_6+P_{\infty}}=7.5$, which is very close to $\beta_{\mathrm{tr}}$. In fact, $S_6+P_7$ has the second smallest $\Gamma_T$ among trees on $13$ vertices. As $n$ grows, more and more graphs $S_5+G'$ (i.e., $S_5$ with various tails) ``beat'' $S_6+P_{n-6}$ among $n$-vertex graphs.

\subsection{Finite versus infinite path} 
In this section we give conditions that ensure 
\[ \Gamma_{H+_v P_k}<\Gamma_{H+_v P_{\infty}} .\]
We are not aiming for the best (i.e., weakest) possible conditions. Our goal is to keep the proof as light as possible on the computational side, while making sure that the result can be applied to the relevant cases $H=K_4$ and $H=S_5$, so that we can conclude that 
\[ \Ga_{K_4+ P_k}<\bestar
\quad \text{and} \quad 
\Ga_{S_5+ P_k}<\betr \]
for every $k \geq 1$. 
\begin{Lemma} \label{lem:Gamma-lower}
Fix a connected graph $H$ and a vertex $v \in V(H)$. Suppose that $\la_H \geq 2$ and let $B$ be the matrix corresponding to $H$ (see Section~\ref{sec:resolvent}). We introduce the following functions of $\la \in (\la_H, \infty)$:
\begin{align} 
S(\la) &:= \sum_{u \in V(H)} B_{u,v}(\la); \nonumber \\
T(\la) &:= \sum_{u \in V(H)} \big( B_{u,v}(\la) \big)^2; \nonumber \\
J(\la) &:= \frac{\bigg( S(\la) + \frac{1}{1-r(\la)^{-1}} \bigg)^2}{T(\la)+\frac{1}{1-r(\la)^{-2}}}. \label{eq:J} 
\end{align}
Suppose that the following conditions hold true for an integer $k_0 \geq 1$:
\begin{enumerate}[(i)]
\item $J(\la)$ is monotone increasing on the interval $(\la_{H+_v P_{k_0}},\la_{H+_v P_{\infty}})$,\\
or the weaker assumption: $J(\la) \leq J(\la_{H+_v P_{\infty}})$ on this interval;
\item $\displaystyle f(\la) := \bigg(T(\la)+1\bigg) r(\la) - S(\la) - \frac{r(\la)}{r(\la)-1} \geq 0$ on the same interval.
\end{enumerate}
Then for any $k\geq k_0$ we have 
\[ \Gamma_{H+_v P_k} 
< J\big( \la_{H+_v P_k} \big) 
\leq J\big( \la_{H+_v P_\infty} \big) 
= \Gamma_{H+_v P_{\infty}} .\]
\end{Lemma}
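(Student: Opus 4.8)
The statement is a three-term chain, so the plan is to establish each link in turn: \textbf{(a)} the identity $J(\la_\infty) = \Gamma_{H+_v P_\infty}$, where $\la_\infty := \la_{H+_v P_\infty}$; \textbf{(b)} the inequality $J(\la_{H+_v P_k}) \le J(\la_\infty)$; and \textbf{(c)} the inequality $\Gamma_{H+_v P_k} < J(\la_{H+_v P_k})$. The common input is the explicit shape of the Perron vector on a pendant path (Section~\ref{sec:ev_on_path}, Proposition~\ref{prop:HB}, Lemma~\ref{lem:la_infty}): writing $r := r(\la)$ and normalising so that the path-neighbour $w_0$ of $v$ has weight $1$, for $G \in \{H+_v P_k,\, H+_v P_\infty\}$ the Perron vector restricts to $u \mapsto B_{u,v}(\la_G)$ on $V(H)$, while on the path $w_0 w_1 \dots$ it equals $x_i = \alpha r^i + (1-\alpha) r^{-i}$, with $\alpha$ as in \eqref{eq:alpha} in the finite case and $x_i = r^{-i}$ in the infinite case.

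\emph{The infinite identity, the finite closed forms, and the middle inequality.} For $G = H+_v P_\infty$, summing the geometric tail gives $\|\x\|_1 = S(\la_\infty) + \frac{1}{1-r(\la_\infty)^{-1}}$ and $\|\x\|_2^2 = T(\la_\infty) + \frac{1}{1-r(\la_\infty)^{-2}}$, so a comparison with \eqref{eq:J} yields (a) at once. For finite $k$ the same geometric summation produces closed forms $\Sigma_1(\la_k,k) := \sum_{i=0}^{k-1} x_i$ and $\Sigma_2(\la_k,k) := \sum_{i=0}^{k-1} x_i^2$, so that $\Gamma_{H+_v P_k} = \big(S(\la_k) + \Sigma_1\big)^2 \big/ \big(T(\la_k) + \Sigma_2\big)$; since $0 < x_i \le r^{-i}$ on the path we get $0<\Sigma_1<\frac{1}{1-r^{-1}}$ and $0<\Sigma_2<\frac{1}{1-r^{-2}}$, with the exact difference
\[ \frac{1}{1-r^{-1}} - \Sigma_1 \;=\; \frac{r^{-k}(r+1)}{(r-1)(1+r^{-k})} \]
and a similar, longer closed form for $\frac{1}{1-r^{-2}} - \Sigma_2$ which is of order $r^{-2k}$ up to a term proportional to $k\,r^{-2k}$. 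For (b): each $H+_v P_k$ is a proper subgraph of the connected graph $H+_v P_{k+1}$, so $\la_{H+_v P_k} < \la_{H+_v P_{k+1}}$ by Perron--Frobenius; together with Lemma~\ref{lem:lambda_sandwich} this gives $\la_{H+_v P_{k_0}} \le \la_{H+_v P_k} < \la_\infty$ for all $k \ge k_0$, and hypothesis~(i) then yields $J(\la_{H+_v P_k}) \le J(\la_\infty)$ for $k > k_0$, the case $k=k_0$ following by continuity of $J$.

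\emph{The main inequality} $\Gamma_{H+_v P_k} < J(\la_k)$, where $\la_k := \la_{H+_v P_k}$. Fix $k \ge k_0$, put $\la = \la_k$, $r = r(\la)$, and abbreviate $S = S(\la)$, $T = T(\la)$, $P_1 = \frac{1}{1-r^{-1}}$, $P_2 = \frac{1}{1-r^{-2}}$, $\delta_1 = P_1 - \Sigma_1 > 0$, $\delta_2 = P_2 - \Sigma_2 > 0$. Expanding $(S+\Sigma_1)^2(T+P_2) < (S+P_1)^2(T+\Sigma_2)$ and cancelling the common summand $(S+\Sigma_1)^2(T+\Sigma_2)$ shows this is equivalent to
\[ \delta_2\,(S+\Sigma_1)^2 \;<\; \delta_1\,\big((S+\Sigma_1) + (S+P_1)\big)\,(T+\Sigma_2) . \]
One now substitutes the closed forms for $\delta_1,\delta_2,\Sigma_1,\Sigma_2$ from the previous step; crude bounds (e.g.\ replacing $\Sigma_1$ by $P_1$, or $T+\Sigma_2$ by $T$) lose too much both near $r=1$ and near $\la_\infty$, so one keeps the exact expressions. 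After dividing by the common factor $r^{-k}$ of $\delta_1$ and $\delta_2$ and simplifying, the inequality is seen to be implied by hypothesis~(ii), $f(\la) = (T+1)r - S - \frac{r}{r-1} \ge 0$ on $(\la_{H+_v P_{k_0}},\la_\infty)$, together with the fact that the residual $k$-dependent contribution (the part of $\delta_2$ proportional to $k\,r^{-2k}$) stays dominated for every $k \ge k_0$. Chaining (c) with (b) and (a) gives $\Gamma_{H+_v P_k} < J(\la_k) \le J(\la_\infty) = \Gamma_{H+_v P_\infty}$, as claimed.

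The main obstacle is step (c): showing that, once the closed forms are inserted, the key inequality really does reduce to condition~(ii). The margin here is genuinely tight, so no constants may be discarded, and the $k$-dependence (through $r^{-k}$ and through the extra $k\,r^{-2k}$ term in $\delta_2$) must be controlled uniformly over all $k \ge k_0$, not merely asymptotically — this is precisely why the statement carries the parameter $k_0$ and the explicit interval $(\la_{H+_v P_{k_0}},\la_{H+_v P_\infty})$. In the two cases needed in the paper, $H = K_4$ and $H = S_5$, one takes $k_0$ small and verifies conditions~(i) and~(ii) on the corresponding short $\la$-intervals, concluding $\Gamma_{K_4+P_k} < \bestar$ and $\Gamma_{S_5+P_k} < \betr$ for all $k \ge 1$ (Theorem~\ref{thm:indeed_below}).
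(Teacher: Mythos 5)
Your decomposition into (a) $J(\la_\infty)=\Ga_{H+_vP_\infty}$, (b) $J(\la_k)\le J(\la_\infty)$, and (c) $\Ga_{H+_vP_k}<J(\la_k)$ mirrors the paper, and your (a) and (b) are correct and essentially identical to the paper's argument. The gap is in (c), which is the whole content of the lemma. You correctly rewrite the target as $\delta_2\,(S+\Sigma_1)^2<\delta_1\,\big((S+\Sigma_1)+(S+P_1)\big)(T+\Sigma_2)$ with $P_1=\frac{r}{r-1}$, $P_2=\frac{r^2}{r^2-1}$, $\delta_i=P_i-\Sigma_i$, but then you only assert that after inserting the closed forms this ``is seen to be implied by'' condition~(ii), with the $k$-dependent part of $\delta_2$ ``dominated''. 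No derivation is given, and condition~(ii) by itself does not close the argument: (ii) only says $T+1\ge (S+P_1)/r$, so after the natural estimates (bounding the left side by $\delta_2(S+P_1)^2$ and the right side from below by $\delta_1(S+P_1)(T+1)\ge\frac{\delta_1}{r}(S+P_1)^2$) one is still left with a purely two-variable inequality of the shape $\delta_2\le\delta_1/r$, i.e.\ (with $t=r(\la_k)$)
\[
\frac{2k\,t^{2k}}{(t^{2k}-1)^2}-\frac{t^2+1}{(t^2-1)(t^{2k}-1)}\;\le\;\frac{t+1}{t(t-1)(t^k+1)},
\]
which must be proved uniformly for all $k\ge k_0$. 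That estimate (or some substitute for it) is exactly the ``domination'' you wave at; it is nontrivial, it is not a consequence of (i) or (ii), and without it step (c) is a plan rather than a proof.

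The paper avoids this algebra entirely. It introduces an auxiliary vector $\y$ that agrees with the Perron vector of $H+_vP_k$ on $V(H)\cup\{w_0\}$ but carries the full geometric tail $r_k^{-i}$ on an infinite path, so that $\Ga(\y)=J(\la_k)$ exactly; it then passes from $\y$ to the (zero-padded) Perron vector $\x$ of $H+_vP_k$ by decreasing the path coordinates one at a time (they are smaller because $\al<0$), invoking the perturbation lemma (Lemma~\ref{perturbation}) at each step. Condition~(ii) enters precisely to verify the hypothesis of that lemma: for every intermediate vector $\y'$ one has $\|\y'\|_2^2/\|\y'\|_1>(T+1)\big/\big(S+\tfrac{r}{r-1}\big)\ge r^{-1}\ge r^{-i}$, so each decrease strictly lowers $\Ga$. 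No closed forms for $\Sigma_1,\Sigma_2$ and no uniform-in-$k$ inequality are needed. If you wish to keep your direct route, you must state and prove the displayed $\delta_2\le\delta_1/r$-type bound for all $k\ge k_0$ (it does appear to be true, with room to spare near $t=1$); otherwise the perturbation-lemma argument is the cleaner way to finish.
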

\begin{proof}
For brevity let 
\[ \la_k:= \la_{H+_v P_k}; \quad 
\la_\infty := \la_{H+_v P_\infty} \]
and 
\[ r_k:=r(\la_k)= r\big( \la_{H+_v P_k} \big); \quad 
r_\infty:=r(\la_\infty) = r\big( \la_{H+_v P_\infty} \big). \]
We have 
\[ 2\leq \la_H < \la_{k_0} \leq \la_k<\la_\infty ,\]
and hence
\[ 1< r_{k_0} \leq r_k < r_\infty .\]
The idea is to define a kind of interpolation between the Perron vectors of $H+_v P_k$ and $H+_v P_\infty$. Let $\al=\frac{-r_k^{-k}}{r_k^k-r_k^{-k}}<0$ and consider the following vectors $\x, \y, \z$, each indexed by the set $V(H) \cup \{w_0,w_1,\ldots\}$:
\begin{align*}
x_u&= \begin{cases}
B_{u,v}(\la_k) & \text{if } u \in V(H);\\
r_k^{-i} + \al(r_k^i-r_k^{-i}) & 
\text{if } u=w_i \text{ for } i=0,1,\ldots,k-1;\\
0 & \text{if } u=w_i \text{ for } i \geq k;\\
\end{cases}\\
y_u&= \begin{cases}
B_{u,v}(\la_k) & \text{if } u \in V(H);\\
r_k^{-i} & \text{if } u=w_i \text{ for } i \geq 0;\\
\end{cases}\\
z_u&= \begin{cases}
B_{u,v}(\la_\infty) & \text{if } u \in V(H);\\
r_\infty^{-i} & \text{if } u=w_i \text{ for } i \geq 0 .\\
\end{cases}
\end{align*}
According to Lemma~\ref{lem:la_infty}, $\z$ is the Perron vector of $H +_v P_\infty$, and hence 
\[ \Ga(\z) = \Ga_{H +_v P_\infty} .\]
Furthermore, $J$ was defined in such a way that 
\[ \Ga(\y) = J(\la_k) 
\quad \text{and} \quad 
\Ga(\z) = J(\la_\infty) .\]
Using $\la_k<\la_\infty$, condition (i) implies that 
\[ J(\la_k) \leq J(\la_\infty)=\Ga(\z)=\Ga_{H+_v P_\infty} .\]
Finally, $\x$ corresponds to the Perron vector of $H +_v P_k$ (see Section~\ref{sec:ev_on_path}, and in particular Proposition~\ref{prop:HB}), and hence 
\[ \Ga(\x) = \Ga_{H +_v P_k} .\]
So it remains to show that $\Ga(\x) < \Ga(y) = J(\la_k)$. We notice that $\x$ and $\y$ coincide over $V(H) \cup \{w_0\}$, while $x_u<y_u$ for any other vertex due to the fact that $\al<0$. We use the perturbation lemma (Lemma~\ref{perturbation}): we start from $\y$, then, step by step, we replace $y_u$ with the smaller $x_u$ at each $u=w_i$, $i \geq 1$. We need that, for any intermediate vector $\y'$ through this process, it holds that 
\[ \frac{\|\y'\|_2^2}{\|\y'\|_1}> y_{w_i} = r_k^{-i}. \]
By trivial estimates and condition (ii) we get
\[ \frac{\|\y'\|_2^2}{\|\y'\|_1} 
> \frac{\left\|\x \big|_{V(H)\cup\{w_0\}} \right\|_2^2}{\|\y\|_1}
= \frac{T(\la_k)+1}{S(\la_k)+ \frac{1}{1-r_k^{-1}}} 
\geq r_k^{-1} \geq r_k^{-i} .\] 
\end{proof}
\begin{Rem} \label{rem:var_t}
In the previous sections, every condition that needed to be verified involved only rational functions. Although $r(\la)$ is not a rational function, we can use $t:=r(\la)$ as a variable instead in order to make every expression a rational function again. Indeed, recall that $r(\la)+1/r(\la)=\la$. Therefore, if we make the substitutions $r(\la) \rightsquigarrow t$ and hence $\la \rightsquigarrow t+1/t$, then everything becomes a rational function of $t$:
\begin{align} 
\Jh(t) &:= \frac{\bigg( S(t+1/t) + \frac{1}{1-t^{-1}} \bigg)^2}{T(t+1/t)+\frac{1}{1-t^{-2}}} ; \label{eq:Jt} \\
\hat{f}(t) &:= \bigg(T(t+1/t)+1\bigg) t - S(t+1/t) - \frac{t}{t-1} . \nonumber
\end{align}
Then $\Jh(t)$ and $\hat{f}(t)$ are rational functions such that $\Jh\big( r(\la) \big)=J(\la)$ and $\hat{f}\big( r(\la) \big)=f(\la)$. Verifying that a particular rational function is monotone or positive on an interval is a straightforward task with a computer.
\end{Rem}
\begin{Thm} \label{thm:indeed_below}
For every $k\geq 1$ we have 
\[ \Gamma_{K_4+P_k}<\bestar=\Gamma_{K_4+P_{\infty}}
\quad \text{and} \quad 
\Gamma_{S_5+P_k}<\betr=\Gamma_{S_5+P_{\infty}} .\]
\end{Thm}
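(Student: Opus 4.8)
The plan is to apply Lemma~\ref{lem:Gamma-lower} with $H=K_4$, $v$ any vertex, and with $H=S_5$, $v$ the center. For each of the two cases one must: (1) compute the relevant rational functions $S(\la)$, $T(\la)$ and hence $J(\la)$ (equivalently $\Jh(t)$ after the substitution $\la\rightsquigarrow t+1/t$ of Remark~\ref{rem:var_t}); (2) check condition (ii), i.e.\ that $\hat f(t)\geq 0$ on the relevant $t$-interval; (3) check condition (i), i.e.\ that $\Jh(t)$ is monotone increasing (or at least bounded above by its right endpoint value) on that interval; and (4) choose a suitable $k_0$ and verify separately the finitely many cases $k<k_0$ by direct computation of $\Ga_{K_4+P_k}$ and $\Ga_{S_5+P_k}$.

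For step (1), the entries of $B=B_H(\la)$ are already recorded in Section~\ref{sec:KS}. For $H=K_4$ the relevant column of $B$ has one diagonal entry $\frac{\la-2}{(\la+1)(\la-3)}$ and three off-diagonal entries $\frac{1}{(\la+1)(\la-3)}$, so $S(\la)=\frac{\la+1}{(\la+1)(\la-3)}=\frac{1}{\la-3}$ and $T(\la)=\frac{(\la-2)^2+3}{(\la+1)^2(\la-3)^2}$; one should double-check these against the known closed form $\bestar=\frac{5+3\sqrt3}{2}$ by verifying that $J$ evaluated at $\la_\infty=\frac{1+3\sqrt3}{2}$ (the root of $B_{v,v}(\la)=r(\la)$) indeed equals $\bestar$. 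The same for $H=S_5$: $S(\la)=\frac{\la+4}{\la^2-4}$, $T(\la)=\frac{\la^2+4}{(\la^2-4)^2}$, and $J(\la_\infty)=\betr$ at $\la_\infty=4/\sqrt3$ (so $t=r(\la_\infty)=\sqrt3$). The relevant interval is $(\la_{H+_v P_{k_0}},\la_{H+_v P_\infty})$; in the $t$-variable this is a subinterval of $(1,r_\infty)$ with $r_\infty=\frac{2+\sqrt3}{2}\cdot\frac{?}{?}$ — concretely $r_\infty=2+\sqrt3$ for $K_4$ (since $r(\la_\infty)=\frac{p-2+\sqrt{p^2-4}}{2}$ with $p=4$ gives $1+\sqrt3$) and $r_\infty=\sqrt3$ for $S_5$; in each case taking $k_0=1$ and the interval $(r(\la_{H+_vP_1}),r_\infty)$ should suffice, but one should be prepared to take $k_0$ as large as, say, $3$ or $4$ if monotonicity fails near the left endpoint.

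The main obstacle I expect is step (3): condition (i) requires $\Jh(t)$ to be increasing on the whole interval, and since $\Jh$ is a ratio of degree-bounded polynomials in $t$, its derivative is again rational, so one reduces to checking positivity of a single polynomial $\tfrac{d}{dt}\Jh(t)$ on $(t_{k_0},r_\infty)$. As emphasized in the paper (``Such results are true only with a small margin''), this polynomial may come uncomfortably close to zero, so a crude sign check may not work directly; the safe route, exactly as in Remark~\ref{rem:poly}, is to perform the change of variable $t=t_{k_0}+\kappa$ (or $t=1+\kappa$) and verify that the numerator of the derivative has all coefficients of one sign — or, if that fails, to invoke the weaker form of (i) and merely show $\Jh(t)\le\Jh(r_\infty)$ on the interval, which amounts to checking that the polynomial $\Jh(r_\infty)\cdot(\text{denominator})-(\text{numerator})$ is nonnegative there, again by the coefficient-positivity trick after shifting the variable. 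Step (2) is of the same nature but easier, since $\hat f$ is a single rational function and its numerator is low-degree. Step (4), the finitely many small-$k$ cases, is entirely mechanical: $K_4+P_k$ and $S_5+P_k$ have explicit characteristic polynomials, so $\la$ and hence $\Ga$ can be computed exactly (or rigorously bounded) for each $k<k_0$, and one checks each value is below $\bestar$ resp.\ $\betr$; this also follows a posteriori from monotonicity $\Ga_{H+_vP_k}\nearrow\Ga_{H+_vP_\infty}$ noted in the introduction, once the lemma is applied with $k_0=1$.
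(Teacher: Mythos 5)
Your plan is essentially the paper's own proof: apply Lemma~\ref{lem:Gamma-lower} with $k_0=1$ to $H=K_4$ and $H=S_5$, compute the same $S(\la)$, $T(\la)$ and $\Jh(t)$, and verify conditions (i) and (ii) by a routine computer check of rational-function positivity/monotonicity on the interval $\big(r(\la_{H+_vP_1}),r_\infty\big)$ (with $k_0=1$ this indeed works, so no separate small-$k$ cases are needed). The only slip is the momentary claim $r_\infty=2+\sqrt{3}$ for $K_4$, which you immediately correct to the right value $1+\sqrt{3}$.
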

\begin{proof}
We apply Lemma~\ref{lem:Gamma-lower} with $k_0=1$ and $H$ being $K_4$ or $S_5$. 

Case $H=K_4$. Based on Section~\ref{sec:KS} we can easily compute that 
\[ S(\la)=\frac{1}{\la-3} 
\quad \text{and} \quad 
T(\la)= \frac{\la^{2} - 4\la + 7}{\la^{4} - 4\la^{3} - 2\la^{2} + 12\la + 9}  .\]
Furthermore,   
\[ \Jh(t) = \frac{t^7 - t^6 - 3t^5 - 3t^4 + 3t^3 + 9t^2 + 8t + 4}{t^7 - 5t^6 + 7t^5 - 7t^4 + 23t^3 - 19t^2 - 6t + 6}. \]
Condition (i) of the lemma requires this rational function to be monotone increasing on the interval $\big( r(\la_{K_4+P_1}) , r(\la_{K_4+P_\infty}) \big)$. Therefore, its derivative (also a rational function) is required to be nonnegative on the given interval. This can be verified easily by a computer, along with condition (ii), see our Jupyter notebook \cite{JUP}.

Case $H=S_5$. Again, based on Section~\ref{sec:KS}: 
\[ S(\la)=\frac{\la + 4}{\la^{2} - 4}
\quad \text{and} \quad 
T(\la)= \frac{\la^{2} + 4}{\la^{4} - 8\la^{2} + 16}  .\]
For $\Jh$ we get  
\[ \Jh(t) = \frac{t^{4} + 4t^{3} + 10t^{2} + 12t + 9}{t^{4} - 2t^{2} + 9} , \]
which we checked to be monotone increasing on the interval $\big( r(\la_{S_5+P_1}) , r(\la_{S_5+P_\infty}) \big)$. Condition (ii) is also satisfied, see \cite{JUP} for details.
\end{proof}

\section{Estimates for branching tails}
\label{sec:tail}

In this section we consider the situation when $G$ contains $H+_v P_{k}$ as an induced subgraph in such a way that only the last vertex of the path may have further neighbors in $G$. Our goal is to find conditions, under which $\Gamma_G \leq \Gamma_{H+P_{\infty}}$ implies that the last vertex has exactly one further neighbor. In other words, if the ``tail'' branches at the $k$-th vertex (for the first time), then $\Gamma_G$ must exceed the limiting ratio $\Gamma_{H+P_{\infty}}$.
\begin{Thm} \label{thm:Gamma-upper}
Let $H$ be a fixed connected graph with $o,v \in V(H)$ not necessarily distinct vertices. Suppose that $\la_H \geq 2$ and let $B=(\la I -A_H)^{-1}$. We define the functions $S(\la),T(\la),J(\la)$ as in Lemma~\ref{lem:Gamma-lower}. We also need the following variants: 
\[ \Sh(t)=S(t+1/t); \quad
\Th(t)=T(t+1/t); \quad 
\Jh(t)=J(t+1/t) .\]
Let $\la_\infty := \la_{H+_v P_{\infty}}$ and $\be_\infty := \Gamma_{H+_v P_{\infty}}$ Suppose that the following conditions hold true for some $\la_\infty<\la'<\la''$, $c>1$, and integer $k \geq 2$.
\begin{enumerate}[(i)]
\item $2\la_\infty+3>\be_\infty$. 
\item $B_{o,v}(\la'') \leq 1$.
\item $B_{v,v}(\la') \geq 1 \big/ r(\la_\infty)$.
\item $\big( S(\la) + 1 \big)^2 \big/ \big( T(\la) + 1 \big) > \be_\infty$ on $[\la',\la'']$.
\item $J(\la)$ is monotone increasing on $(\la_\infty,\la')$,\\
or the weaker assumption: $J(\la) > J(\la_\infty) = \be_\infty$ on $(\la_\infty,\la')$.
\item $\displaystyle \frac{\big(\Sh(t)+1+c\big)^2}{\Th(t)+1+c^2} > \be_\infty$ for $t \in (r_\infty,r')$, where $r_\infty:=r(\la_\infty)$ and $r':=r(\la')$.
\item $\displaystyle \frac{\big(\Sh(t)+1+ct\big)^2}{\Th(t)+1+c^2t^2} > \be_\infty$ for $t \in (r_\infty,r')$.
\item $\displaystyle \frac{\frac{2}{\be_\infty}\left(\Sh(t)+\frac{t}{t-1}\right) \left(1-\frac{t+1}{t-1}t^{-k} \right)-2k t^{-k}}{ \frac{t^{3}}{t^2-1}} \geq c$ for $t \in (r_\infty,r')$.
%
%
\end{enumerate}
\medskip

Then 
\[ \Gamma_G > \be_\infty=\Gamma_{H+_v P_{\infty}} \]
holds for any connected graph $G$ that contains $H+_v P_{k}$ as a proper induced subgraph in such a way that $o$ is the master vertex, the endpoint of $P_k$ has at least two neighbors outside $H+_v P_{k}$, and no other vertex of $H+_v P_{k}$ has outside neighbors.
\end{Thm}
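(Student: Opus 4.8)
The plan is to prove the stronger statement that $\Ga(\xt)>\be_\infty$, where $\xt:=\x\big|_{N_G[H+_v P_k]}$. Indeed, since the only vertex of $H+_v P_k$ with outside neighbors is the endpoint of $P_k$, which is distinct from $o\in V(H)$, we have $N_G[H+_v P_k]\supseteq N_G[o]$, so Lemma~\ref{subgraph-lower-bound}(b) gives $\Ga_G\ge\min\big(\Ga(\xt),2\la_G+3\big)$, and condition (i) together with $\la_G>\la_\infty$ upgrades this to $\Ga_G>\be_\infty$. For the bookkeeping, write the pendant path as $w_0,\dots,w_{k-1}$ with $w_0\sim v$, and let $z_1,\dots,z_m$ ($m\ge2$) be the outside neighbors of $w_{k-1}$. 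Since $G$ contains $H+_v F_{k,2}$ as a subgraph, Lemma~\ref{lem:lambda_sandwich} yields $\la:=\la_G>\la_\infty$, hence $r:=r(\la)>r_\infty:=r(\la_\infty)$; this is the only place $m\ge2$ enters. Normalizing $x_{w_0}=1$, Lemma~\ref{lem:inside_weights} gives $\x\big|_H=B_{\cdot,v}(\la)$, so $\|\x\big|_H\|_1=S(\la)$, $\|\x\big|_H\|_2^2=T(\la)$, $x_o=B_{o,v}(\la)$, $x_v=B_{v,v}(\la)$; and the eigenvalue equations along the path give $x_{w_i}=Ar^i+(1-A)r^{-i}$ with $A=\tfrac{r-B_{v,v}(\la)}{r-r^{-1}}$ (forced at $w_0$) and $\sum_j x_{z_j}=Ar^{k}+(1-A)r^{-k}=:X$ (forced at $w_{k-1}$).

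Next I split on the size of $\la$, which conditions (ii) and (iv) confine to $(\la_\infty,\la')$. First, if $\la>\la''$, then since $o$ is the master $x_o\ge x_{w_0}=1$, i.e. $B_{o,v}(\la)\ge1$; but $B_{o,v}$ is strictly decreasing (Proposition~\ref{prop:B_properties}(c)), so $B_{o,v}(\la)<B_{o,v}(\la'')\le1$ by (ii) — a contradiction, so this case is vacuous. Second, if $\la\in[\la',\la'']$, then $\Ga\big(\x\big|_{V(H)\cup\{w_0\}}\big)=\tfrac{(S(\la)+1)^2}{T(\la)+1}>\be_\infty$ by (iv), and $V(H)\cup\{w_0\}\supseteq N_G[o]$, so Lemma~\ref{subgraph-lower-bound}(b) and (i) again give $\Ga_G>\be_\infty$.

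The remaining main case is $\la\in(\la_\infty,\la')$. Here condition (iii) and the monotonicity of $B_{v,v}$ and $r$ force $r^{-1}<B_{v,v}(\la)<r$, hence $0<A<1$: the path profile $Ar^i+(1-A)r^{-i}$ genuinely interpolates between the growing and decaying modes. One decomposes $\xt$ into its restriction to $V(H)\cup\{w_0\}$ (contributing $S+1$ to $\|\cdot\|_1$ and $T+1$ to $\|\cdot\|_2^2$) and the ``tail'' $w_1,\dots,w_{k-1},z_1,\dots,z_m$, and estimates the tail's $\ell^1$- and $\ell^2$-mass — from below and above, respectively — using the explicit profile together with $x_{z_j}\le x_o$ and $\sum_j x_{z_j}=X$. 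The constant $c>1$ is a uniform quantity, its admissibility over the whole interval being exactly condition (viii) (written in the variable $t=r$ as in Remark~\ref{rem:var_t} so that everything is rational); using the Perturbation Lemma~\ref{perturbation} it lets one replace the tail by a single auxiliary vertex — of weight either $c$ or $ct$ depending on the regime — while only decreasing the ratio, after which conditions (vi) and (vii) certify that the two possible short vectors have ratio $>\be_\infty$, and condition (v) handles the regime where the growing mode is so small that $\xt$ is effectively the (truncated) infinite-path profile, with ratio $J(\la)>\be_\infty$. Combining the cases yields $\Ga(\xt)>\be_\infty$ and hence $\Ga_G>\be_\infty$, as claimed.

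The main obstacle is the main case: the margin over $\be_\infty$ is tiny, so the tail estimates must be sharp, and — crucially — uniform in the path length $k$. The growing mode $Ar^i$ is the source of the difficulty: for short paths $\la$ (hence $A$) may be comparatively large, whereas for long paths the squeeze $\la_G\ge\la_{H+_v F_{k,2}}\searrow\la_\infty$ keeps $A$ small, and a single constant $c$ must absorb the whole trade-off; moreover the distribution of the weights $x_{z_j}$ is essentially unknown (only $x_{z_j}\le x_o$ and $m\ge2$ are available), so the bound on $\sum_j x_{z_j}^2$ must be both crude enough to be valid and tight enough to win. Conditions (i)--(iv) do little more than localize $\la_G$ to $(\la_\infty,\la')$; the substance of the verification, conditions (v)--(viii), then lives on that fixed interval and — again by Remark~\ref{rem:var_t} — reduces to checking monotonicity/positivity of explicit rational functions of $t$, which is what makes it amenable to the computer-assisted checks carried out for $H=K_4$ and $H=S_5$.
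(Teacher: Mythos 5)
Your overall skeleton matches the paper's: the reduction to $\Ga(\xt)>\be_\infty$ via Lemma~\ref{subgraph-lower-bound}(b) and condition (i), ruling out $\la>\la''$ with (ii) and the master property, handling $\la\in[\la',\la'']$ with (iv), and setting up the path profile with $0<\al<1$ from (iii) are all exactly the paper's steps. But the main case $\la\in(\la_\infty,\la')$ — which is the entire content of the theorem — is left as a black box, and where you do commit to a mechanism it is either unjustified or wrong. You never show how condition (viii) produces a usable bound. In the paper this comes from a concrete decomposition: one bounds $\big\|\x\big|_{\overline H}\big\|_1\ge Z_{1,\m}+Z_{1,\e}$ and $\big\|\x\big|_{\overline H}\big\|_2^2\le Z_{2,\m}+Z_{2,\e}$ with $Z_{1,\m}=\Sh(t)+\frac{t}{t-1}$, $Z_{2,\m}=\Th(t)+\frac{t^2}{t^2-1}$ and error terms proportional to $\al$; condition (v) gives $Z_{1,\m}^2\ge\be_\infty Z_{2,\m}$, and a dichotomy on the sign of $2Z_{1,\m}Z_{1,\e}-\be_\infty Z_{2,\e}$ either finishes outright or, after rearranging and dividing by $\al t^k$, yields precisely $x_{k-1}\ge c$ via (viii). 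Related to this, your claim that $m\ge 2$ enters only through $\la_G>\la_\infty$ is not tenable: the paper uses the two outside neighbors of $w_{k-1}$ a second time, via $\sum_j x_{z_j}\ge \frac{2}{\la}x_{k-1}$, to derive $\al t^k\ge(1-\al)t^{-k}$, and this inequality is what makes the $\ell^1$ lower bound on the tail tight enough; with margins of order $10^{-1}$ or less over $\be_\infty$ you cannot simply drop it, and your substitute estimates (using $x_{z_j}\le x_o$ and $\sum_j x_{z_j}=X$) are not carried out.

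The step you do specify — using the Perturbation Lemma~\ref{perturbation} to ``replace the tail by a single auxiliary vertex of weight either $c$ or $ct$ depending on the regime, while only decreasing the ratio'' — is not a valid move as stated: Lemma~\ref{perturbation} only permits lowering one coordinate subject to a threshold condition, it does not let you collapse a tail into one vertex with a prescribed weight, and there is no regime dichotomy that makes the weight exactly $c$ or exactly $ct$. What is actually needed (and what the paper does) is an intermediate-value argument: from $x_0=1$, $x_{k-1}\ge c>1$ and $x_j\le t\,x_{j-1}$, some genuine path vertex $w_j$ has $x_j\in[c,ct]$; taking $H'=G\big[V(H)\cup\{w_0,w_j\}\big]$, one observes that the superlevel set of $f(y)=\frac{(\Sh(t)+1+y)^2}{\Th(t)+1+y^2}$ above $\be_\infty$ is an interval, so conditions (vi) and (vii) (checking the endpoints $c$ and $ct$) give $f(x_j)>\be_\infty$, and Lemma~\ref{subgraph-lower-bound}(b) with (i) concludes. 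Until you supply the main/error decomposition, the derivation of $x_{k-1}\ge c$ from (viii), the use of $m\ge 2$ in the weight estimates, and the interval argument replacing your perturbation step, the proposal is an outline rather than a proof.
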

\begin{Rem} \label{rem:cond8}
Note that if $\la_\infty \geq 2+ \frac{1}{k_0(k_0+1)}$, then it is easy to see that for any fixed $t > r_\infty \geq 1+\frac{1}{k_0}$, the sequence $k t^{-k}$, $k \geq k_0$, is monotone decreasing, and hence the left-hand side of (viii) is increasing in $k \geq k_0$. So condition (viii) gets weaker as $k$ grows: if it holds for $k_0$, then it is true for any $k \geq k_0$. 
\end{Rem}

We will apply this theorem to $H=K_4 +_o P_1$ and $H=S_5 +_o P_4$ in Section~\ref{sec:tail_K4_S5}.



\begin{proof}
Let $G$ be as in the statement of the theorem containing $H +_v P_k$ as an induced subgraph. As before, the vertices of $P_k$ are denoted by $w_0,w_1,\dots ,w_{k-1}$. Let $w_{k},\ldots ,w_{k+s-1}$ be the neighbors of $w_{k-1}$ outside $H+_v P_k$. By our assumption, we have $s \geq 2$. Furthermore, let $w_{-1}:=v$.

In the rest of the proof we will use $\la$ for $\la_G$ and $t$ for $r(\la)$, where $r$ is the function defined in Section~\ref{sec:ev_on_path}: 
\[ \la := \la_G 
\quad \text{and} \quad 
t := r(\la_G) = r(\la) = \frac{\la+\sqrt{\la^2-4}}{2}>1 .\]
Note that $\la_{\infty}< \la_{H +_v F_{k,2} } \leq \la_G = \la$ by Lemma~\ref{lem:lambda_sandwich}.  

Let $\x$ be the Perron vector of $G$. For better readability, we will often write $x_i$ for $x_{w_i}$. Let us normalize $\x$ in such a way that $x_0=x_{w_0}=1$. Then, by Proposition~\ref{prop:HB} we have
\begin{equation} \label{eq:xu}
x_u = B_{u,v}(\la) \quad \text{for } u\in V(H), 
\end{equation}
and hence
\begin{equation} \label{eq:STJ}
\sum_{u \in V(H)} x_u = S(\la)=\Sh(t); \quad 
\sum_{u \in V(H)} x_u^2 = T(\la)=\Th(t) .
\end{equation}

First we show that $\la \leq \la''$. Otherwise we can use \eqref{eq:xu} for $u=o$ and the fact that $B_{o,v}$ is strictly monotone decreasing as well as condition (ii) to conclude that 
\[ x_o = B_{o,v}(\la) < B_{o,v}(\la'') \leq 1= x_{w_0} ,\] 
contradicting that $o$ is the master vertex of $G$. 

If $\la \in [\la',\la'']$, then consider the induced subgraph $H' = G\big[ V(H) \cup \{w_0\} \big]$. By \eqref{eq:STJ} and condition (iv) we have 
\[ \Gamma\big( \x\big|_{H'} \big) = \frac{\big( S(\la) + 1 \big)^2}{T(\la) + 1} > \be_\infty .\]
Whenever we have a subgraph $H'$ containing $N_G[o]$ with $\Gamma\big( \x_{H'} \big) > \be_\infty$, we can use Lemma~\ref{subgraph-lower-bound}(b) and condition (i) to conclude that $\Ga_G > \be_\infty$ holds as well, and we are done. (We will use this observation on a number of other occasions in this proof.)

Therefore, \textbf{we may assume that} $\la_\infty< \la < \la'$. Recall that 
\[ r_\infty=r(\la_\infty); \quad 
t=r(\la); \quad 
r'=r(\la'), \quad \text{and hence } 1<r_\infty<t < r' .\]
We also have 
\[ t+\frac{1}{t} = \la 
\quad \text{and} \quad 
r_\infty+\frac{1}{r_\infty} = \la_\infty .\]
Moreover, by Lemma~\ref{lem:la_infty}, $\la_{\infty}$ is the solution of the equation 
\[ B_{v,v}(\la_\infty)=r_{\infty}.\]
Also recall (from Section~\ref{sec:ev_on_path}) that there exists an $\al$ in such a way that
\begin{equation} \label{eq:xj}
x_j=\al t^j+(1-\al)t^{-j} 
\quad \text{for } j=-1,0,1,\dots ,k-1 .
\end{equation}

Next we show that $0<\alpha<1$. Using condition (iii) and the fact that $r$ is monotone increasing, while $B_{v,v}$ is strictly monotone decreasing by Proposition~\ref{prop:B_properties}, we get  
\[ t> r_\infty = B_{v,v}(\la_\infty) > B_{v,v}(\la)
> B_{v,v}(\la') \geq \frac{1}{r(\la_\infty)} > \frac{1}{r(\la)} = \frac{1}{t} .\]
Furthermore, using \eqref{eq:xj} for the vertex $v = w_{-1}$: 
\[ B_{v,v}(\la)=x_v=x_{-1}=\al \frac{1}{t}+(1-\al)t .\]
Thus we have $t>\al \frac{1}{t}+(1-\al)t > \frac{1}{t}$, and $0<\alpha<1$ clearly follows.


Next observe that
\[\sum_{j=k}^{k+s-1}x_j=\la x_{k-1}-x_{k-2}=\alpha t^{k}+(1-\alpha)t^{-k}.\]
Note that $x_j\geq \frac{x_{k-1}}{\la}$ for $j=k,\dots ,k+s-1$, so
\[\alpha t^{k}+(1-\alpha)t^{-k}=\sum_{j=k}^{k+s-1}x_j\geq x_k + x_{k+1} \geq \frac{2}{\la}\bigg( \alpha t^{k-1}+(1-\alpha)t^{1-k} \bigg).\]
Thus
\[ \al t^{k} \left(1-\frac{2}{\la t} \right) 
\geq (1-\al) t^{-k} \left(\frac{2t}{\la} -1 \right) .\]
Recall that $t+1/t=\la$, meaning that $1-\frac{2}{\la t} = \frac{2t}{\la} - 1 > 0$, and we conclude that  
\begin{equation}\label{alpha-bound}
\alpha t^k\geq (1-\alpha)t^{-k}.
\end{equation}

Let $\overline{H}=G\big[ V(H)\cup \{w_0,\ldots, w_{k+s-1}\} \big]$. Using \eqref{eq:STJ}, \eqref{eq:xj}, and then \eqref{alpha-bound}:
\begin{align*}
\left\| \x\big|_{\overline{H}} \right\|_1
&= \sum_{u \in V(H)} x_u + \sum_{j=0}^{k} \big( \alpha t^j+(1-\alpha)t^{-j} \big) \\
&=\Sh(t)+\alpha \frac{t^{k+1}-1}{t-1}+(1-\alpha)\frac{1-t^{-k-1}}{1-t^{-1}} \\
&=\Sh(t)+\frac{t}{t-1}-\alpha \frac{t+1}{t-1}+\frac{1}{t-1}\left(\alpha t^{k+1}-(1-\alpha)t^{-k}\right) \\
&\geq \Sh(t)+\frac{t}{t-1}-\alpha \frac{t+1}{t-1}+\frac{1}{t-1}\left(\alpha t^{k+1}-\alpha t^{k}\right) \\
&= \Sh(t)+\frac{t}{t-1} + \al \left(t^k - \frac{t+1}{t-1} \right) .
\end{align*}
The trivial estimate gives 
\[ \sum_{j=k}^{k+s-1}x_j^2 
\leq \left(\sum_{j=k}^{k+s-1}x_j\right)^2 = 
\big( \alpha t^k+(1-\alpha)t^{-k} \big)^2 .\]
Using this and \eqref{eq:STJ} we get the following for the $\ell^2$ norm:
\begin{align*}
\left\| \x\big|_{\overline{H}} \right\|_2^2
&\leq \sum_{u \in V(H)} x_u^2 + \sum_{j=0}^{k} \big( \alpha t^j+(1-\alpha)t^{-j} \big)^2 \\
&=\Th(t)+\alpha^2\frac{t^{2k+2}-1}{t^2-1}+2(k+1)\alpha(1-\alpha)+(1-\alpha)^2\frac{1-t^{-2k-2}}{1-t^{-2}} \\
&\leq \Th(t)+\alpha^2\frac{t^{2k+2}-1}{t^2-1}+2(k+1)\alpha(1-\alpha)+(1-\alpha)^2\frac{t^2}{t^{2}-1} \\
&= \Th(t)+\frac{t^2}{t^{2}-1}+\alpha^2 \left( \frac{t^{2k+2}}{t^2-1} - 2k-1 \right) + \al \left( 2k+2-\frac{2t^2}{t^{2}-1} \right) \\&\leq \Th(t)+\frac{t^2}{t^{2}-1}+\alpha^2 \, \frac{t^{2k+2}}{t^2-1} + \al \left( 2k -\frac{2}{t^{2}-1} \right) \\
&\leq \Th(t)+\frac{t^2}{t^{2}-1}+ \alpha \left( x_{k-1} \, \frac{t^{k+3}}{t^2-1} + 2k \right) ,
\end{align*}
where we used the simple bound $\al t^{k-1} \leq x_{k-1}$ at the last inequality.

Let us introduce the following notations:
\begin{align*}
Z_{1,\m} &=\Sh(t)+\frac{t}{t-1}; \quad 
Z_{1,\e} =\alpha \left(t^{k}-\frac{t+1}{t-1}\right); \\
Z_{2,\m} &=\Th(t)+\frac{t^2}{t^2-1}; \quad
Z_{2,\e} =\alpha \left( x_{k-1} \, \frac{t^{k+3}}{t^2-1} + 2k \right) .
\end{align*}
Then our estimates on the $\ell^1$ and $\ell^2$ norms imply that 
\[ \Gamma\left(\x\big|_{\overline{H}}\right)
\geq \frac{\left(Z_{1,\m}+Z_{1,\e} \right)^2}{Z_{2,\m}+Z_{2,\e}}. \]
Our goal is to show that  
$\Gamma\left(\x\big|_{\overline{H}}\right)>\be_{\infty}$ by proving the following: 
\[ (Z_{1,\m}+Z_{1,\e})^2-\be_{\infty}(Z_{2,\m}+Z_{2,\e}) > 0 .\]
After expanding the left-hand side we get 
\[ (Z_{1,\m}^2-\be_{\infty}Z_{2,\m})
+(2Z_{1,\m} Z_{1,\e}-\be_{\infty}Z_{2,\e})
+Z_{1,\e}^2 .\]
The third term is clearly nonnegative. So is the first term, since, by condition (v), we have 
\[\frac{Z_{1,\m}^2}{Z_{2,\m}}=\Jh(t)=J(\la) \geq 
\be_{\infty} .\]
Therefore, if the second term is positive, then $\Gamma\left(\x\big|_{\overline{H}}\right)>\be_{\infty}$, implying $\Ga_G>\be_{\infty}$, and we are done. So from this point on we will assume that $2Z_{1,\m}Z_{1,\e}-\be_{\infty}Z_{2,\e} \leq 0$, that is: 
\[ 2\left(\Sh(t)+\frac{t}{t-1}\right)\alpha \left(t^{k}-\frac{t+1}{t-1}\right) \\ 
\leq \be_{\infty}\alpha \left( x_{k-1} \, \frac{t^{k+3}}{t^2-1} + 2k \right) .\]
We divide both sides by $\al t^{k}$ and rearrange to get the following lower bound on $x_{k-1}$:
\[ x_{k-1} \geq 
\frac{\frac{2}{\be_\infty}\left(\Sh(t)+\frac{t}{t-1}\right) \left(1-\frac{t+1}{t-1}t^{-k} \right)-2k t^{-k}}{ \frac{t^{3}}{t^2-1}} \]
By condition (viii) we conclude that $x_{k-1} \geq c$. 

Since $x_0=1$, $x_{k-1} \geq c>1$ and $x_j \leq t x_{j-1}$ for every $1 \leq j \leq k-1$, it follows that $c \leq x_j \leq ct$ must hold for some $1 \leq j \leq k-1$. Consider $H'=G\big[ V(H) \cup \{w_0,w_j\} \big]$. We claim that $\Ga\big( \x \big|_{H'} \big) > \be_\infty$. Consider the function 
\[ f(y) := \frac{\big(\Sh(t)+1+y\big)^2}{\Th(t)+1+y^2} .\]
On the one hand, $\Ga\big( \x \big|_{H'} \big) = f(x_j)$. On the other hand, it is easy to see that $I := \{ y \, : \, f(y) > \be_\infty \}$ is an open interval. From conditions (vi) and (vii) we know that $c \in I$ and $ct \in I$, and hence $x_j \in [c,ct] \subseteq I$, meaning that $\Ga\big( \x \big|_{H'} \big) = f(x_j)>\be_\infty$. Since $H'$ contains $N_G[o]$, it follows that $\Ga_G > \be_\infty$, and the proof is finally complete.
\end{proof}

\subsection{Checking conditions} \label{sec:tail_K4_S5}
Next we show how to check the conditions of Theorem~\ref{thm:Gamma-upper} in the special cases $H=K_4+P_1$ and $H=S_5+P_4$, which will be needed for the proofs of Theorem~\ref{thm:only_K4Pk} and Theorem~\ref{thm:only_S5Pk}, respectively.

\begin{Thm} \label{K4_tail}
Let $\ell \geq 3$. Suppose that a connected graph $G$ contains $H_\ell=K_4 +_o P_\ell$ as an induced subgraph and that $o$ is the master vertex of $G$. If the leaf of $H_\ell$ has at least two neighbors outside $H_\ell$ and no other vertex in $H_\ell$ has an outside neighbor, then $\Ga_G > \bestar$.
\end{Thm}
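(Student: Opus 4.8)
The statement is exactly the special case $H = K_4 +_o P_1$ of the general branching-tail theorem, Theorem~\ref{thm:Gamma-upper}, with the pendant path $P_\ell$ of $H_\ell = K_4 +_o P_\ell$ playing the role of $H +_v P_k$: here $v$ is the leaf of $K_4 +_o P_1$, and $k = \ell - 1 \geq 2$. So the proof amounts to checking conditions (i)--(viii) of Theorem~\ref{thm:Gamma-upper} for this particular $H$. First I would record the relevant data. Since $H = K_4 +_o P_1$, the resolvent matrix $B = (\la I - A_H)^{-1}$ is a fixed $5\times 5$ matrix of rational functions; using Section~\ref{sec:KS} one computes $B_{v,v}(\la)$, $B_{o,v}(\la)$, and then $S(\la) = \sum_{u} B_{u,v}(\la)$, $T(\la) = \sum_u B_{u,v}(\la)^2$ explicitly. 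Note that $H +_v P_\infty = K_4 +_o P_\infty$, so $\la_\infty = \la_{K_4+P_\infty} = (1+3\sqrt3)/2 \approx 3.098$ and $\be_\infty = \bestar = (5+3\sqrt3)/2 \approx 5.098$, and $r_\infty = r(\la_\infty) = (2+\sqrt3)/... $ can be read off from the formula $r(\la_{K_4+P_\infty}) = (p-2+\sqrt{p^2-4})/2$ with $p=4$, i.e.\ $r_\infty = 1+\sqrt3$. Wait—need to double-check: $(2 + \sqrt{12})/2 = 1 + \sqrt3 \approx 2.732$.

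\textbf{Choosing the parameters.} Theorem~\ref{thm:Gamma-upper} has free parameters $\la' < \la''$ with $\la' > \la_\infty$, a constant $c > 1$, and the integer $k \geq 2$. By Remark~\ref{rem:cond8}, condition (viii) is weakest for large $k$, so the binding case is $k = 2$ (i.e.\ $\ell = 3$); I would verify (viii) for $k = 2$ and invoke the remark for all larger $k$ — one only needs to check that $\la_\infty \geq 2 + \frac{1}{k_0(k_0+1)}$ with $k_0 = 2$, i.e.\ $\la_\infty \geq 2\frac16 \approx 2.167$, which holds since $\la_\infty \approx 3.098$. The real work is to pick $\la', \la'', c$ so that all eight inequalities hold simultaneously. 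I would pick $\la''$ large enough that condition (ii), $B_{o,v}(\la'') \le 1$, holds (since $B_{o,v}$ is strictly decreasing and $\to 0$, any sufficiently large $\la''$ works; in fact $B_{o,v}(\la) = 1$ at exactly $\la_{K_4+P_1}$'s successor, so $\la''$ just needs to be above that threshold). Then $\la'$ should be taken slightly above $\la_\infty$ — small enough that the various ``$> \be_\infty$'' inequalities (iv), (vi), (vii) still have room, and also small enough that (iii), $B_{v,v}(\la') \geq 1/r_\infty$, holds (again $B_{v,v}$ is strictly decreasing and equals $r_\infty$ precisely at $\la_\infty$ by Lemma~\ref{lem:la_infty}, so any $\la'$ with $B_{v,v}(\la') \ge 1/r_\infty$ works as long as $\la' $ is not too far past $\la_\infty$; one may even need a cruder sufficient condition here). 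Finally $c > 1$ is chosen to make (viii) hold on $(r_\infty, r')$ while keeping (vi), (vii) true — a value of $c$ just above $1$ should work since as $t \to r_\infty^+$ the left side of (viii) tends to a value strictly exceeding $1$ (this is essentially the content of Lemma~\ref{lem:Gamma-lower}'s condition (ii), which we already know holds for $K_4$ by Theorem~\ref{thm:indeed_below}).

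\textbf{Mechanics of the verification.} Conditions (i) and (v) are already essentially free: (i) is $2\la_\infty + 3 > \be_\infty$, i.e.\ $\approx 9.20 > 5.10$, obvious; and (v), the monotonicity of $J$ on $(\la_\infty, \la')$ (or the weaker $J > \be_\infty$ there), follows from the proof of Theorem~\ref{thm:indeed_below}, where $\Jh(t)$ for $K_4$ was already shown to be monotone increasing on $(r(\la_{K_4+P_1}), r(\la_\infty))$ — and since $J(\la_\infty) = \be_\infty$, monotonicity forces $J > \be_\infty$ just below $\la_\infty$... but careful: here we need $J$ on $(\la_\infty, \la')$, i.e.\ \emph{above} $\la_\infty$, so I would re-run the monotonicity check of $\Jh$ on the slightly larger interval $(r(\la_{K_4+P_1}), r')$. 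All of (iv), (vi), (vii), (viii) are, after substituting $t = r(\la)$ as in Remark~\ref{rem:var_t}, inequalities between explicit rational functions of $t$ on a bounded interval $(r_\infty, r')$; each reduces (by clearing the positive denominator and moving everything to one side) to showing a polynomial is positive on $(r_\infty, r')$, which pdflatex-era SageMath handles by the change-of-variable $t = r_\infty + \ka$ and checking coefficient signs, exactly as in Remark~\ref{rem:poly}. I would carry out these checks numerically/symbolically in the Jupyter notebook \cite{JUP} and cite it.

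\textbf{Expected main obstacle.} The delicate point is that Theorem~\ref{thm:Gamma-upper} holds only ``with a small margin'': the inequalities (vi), (vii), (viii) are tight near $t = r_\infty$ (indeed at $t = r_\infty$ several of them degenerate to equalities, since $r_\infty, \la_\infty, \be_\infty$ are defined by equalities). So there is genuine tension between wanting $c$ close to $1$ (for (vi),(vii)) and wanting $c$ bounded away from $1$ (for (viii)), and between wanting $\la'$ close to $\la_\infty$ (so (iv),(vi),(vii) have room) and needing the interval $(\la_\infty,\la')$ nonempty while (iii) still holds. Finding one triple $(\la', \la'', c)$ that threads all the needles — and proving the polynomial positivity rigorously rather than just numerically — is the crux; I expect this is exactly why the authors introduced the margin variable $c$ and split the tail estimate into eight conditions rather than attempting a single inequality. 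Once a valid choice is exhibited and the (finitely many, low-degree) polynomial positivity checks pass, the theorem is immediate from Theorem~\ref{thm:Gamma-upper}.
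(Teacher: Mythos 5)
Your proposal follows the paper's proof essentially verbatim: specialize Theorem~\ref{thm:Gamma-upper} to $H=K_4+_o P_1$ with $v$ its leaf and $k=\ell-1\geq 2$, use Remark~\ref{rem:cond8} to reduce condition (viii) to the case $k=2$, and verify conditions (i)--(viii) by computer for a concrete choice of parameters (the paper takes $\la''=3.18$, $\la'=3.11$, $c=1$). One small caution: the functions $S,T,J,\Jh$ relevant here are those of the $5$-vertex graph $K_4+_o P_1$, not those computed for $K_4$ in Theorem~\ref{thm:indeed_below}, so condition (v) must be verified afresh for this new $\Jh$ on $(\la_\infty,\la')$ rather than borrowed from that earlier monotonicity check --- which is in effect what your ``re-run the check on the larger interval'' remark amounts to, and is exactly what the paper's computer verification does.
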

\begin{proof} We apply Theorem~\ref{thm:Gamma-upper} to $H:=H_1=K_4 +_o P_1$ and $k \geq 2$. We need to check the conditions of the theorem. Let $v$ be the leaf of $H$ (i.e., the single vertex of $P_1$):

\begin{center}
\figKKKKP
\end{center}

Then $H +_v P_\infty \cong K_4 + P_\infty$, so we have 
\[ \la_\infty = \frac{3\sqrt{3}+1}{2} \approx 3. 098
\quad \text{and} \quad 
\be_\infty = \bestar \approx 5.098 \]
clearly satisfying condition (i). Furthermore, $r_{\infty}=1+\sqrt{3}$. The characteristic polynomial of $A_H$ is 
\[ P(\la) = (\la+1)^2(\la^3-2\la^2-4\la+2) ,\]
and we get the following for the column vector of $B$ corresponding to $v$, that is, the vector consisting of the entries $B_{u,v}(\la)$, $u \in V(H)$:
\[ \frac{1}{P(\la)}\bigg((\la+1)^2,(\la+1)^2,(\la+1)^2,(\la-2)(\la+1)^2, (\la-3)(\la+1)^3 \bigg) ,\]
where the first three entries correspond to the degree-$3$ vertices of $H$, while the fourth and fifth entries correspond to $o$ and $v$, respectively. 

From here $S,T, \Sh, \Th, \Jh$ can be easily computed. Conditions (i)--(iii) are simple inequalities, while each of the remaining conditions (iv)--(viii) is an inequality on an interval involving rational functions and hence can be checked quickly with a computer.

We can set $\la''=3.18$ so that condition (ii) is just satisfied. Then we set $\la'=3.11$ so that condition (iv) holds on $[\la',\la'']$. Then we check condition (iii) and verify that (v) holds on $(\la_\infty,\la')$.

Next we choose $c=1$ and verify (vi), (vii) on the interval $(r_\infty,r')$. Finally, we need to check condition (viii) for any $k \geq 2$. According to Remark~\ref{rem:cond8}, it suffices to check it for $k=2$, which is again a straightforward task with a computer.

See our Jupyter notebook \cite{JUP} for further details.
\end{proof}

Next we consider the other relevant special case of Theorem~\ref{thm:Gamma-upper}.
\begin{Thm} \label{S5_tail}
Let $\ell \geq 8$. Suppose that a connected graph $G$ contains $H_\ell=S_5 +_o P_\ell$ as an induced subgraph, where the central vertex $o$ of the star coincides with the master vertex of $G$. If the leaf of $H_\ell$ (non-adjacent to $o$) has at least two neighbors outside $H_\ell$ and no other vertex in $H_\ell$ has an outside neighbor, then $\Ga_G \geq \betr$.
\end{Thm}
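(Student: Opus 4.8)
The plan is to apply Theorem~\ref{thm:Gamma-upper} with the choice $H := S_5 +_o P_4$, taking $v$ to be the endpoint of the pendant path $P_4$ lying at distance $4$ from $o$, and with $k := \ell - 4$. Since $\ell \ge 8$ we have $k \ge 4 \ge 2$, and $H +_v P_k \cong S_5 +_o P_\ell$ while $H +_v P_\infty \cong S_5 +_o P_\infty$. Note $\la_H = \la_{S_5+P_4} > 2$, as the theorem requires. By Section~\ref{sec:KS},
\[ \la_\infty := \la_{S_5+P_\infty} = \tfrac{4}{\sqrt3}, \qquad
\be_\infty := \betr = 4+2\sqrt3, \qquad
r_\infty := r(\la_\infty) = \sqrt3, \]
and condition (i) holds, since $2\la_\infty + 3 = 3 + \tfrac{8}{\sqrt3} \approx 7.619$ exceeds $\betr \approx 7.464$. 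Also, the hypotheses on $G$ in Theorem~\ref{S5_tail} translate verbatim into the branching-tail setup of Theorem~\ref{thm:Gamma-upper} (only the endpoint of $P_k$, i.e.\ the far leaf of $S_5 +_o P_\ell$, has outside neighbours, and it has at least two; $o$ is the master), so only conditions (i)--(viii) need verifying.

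Next I would carry out the same calibration as in the proof of Theorem~\ref{K4_tail}, now with a $9$-vertex base graph. Compute the characteristic polynomial $P_H(\la) = \det(\la I - A_H)$ and the column $\big(B_{u,v}(\la)\big)_{u \in V(H)}$ of $B = (\la I - A_H)^{-1}$; by Proposition~\ref{prop:B_properties}(d) this column equals $P_H(\la)^{-1}$ times a vector of integer polynomials, from which one reads off $B_{o,v}(\la)$, $B_{v,v}(\la)$ and forms $S(\la) = \sum_{u} B_{u,v}(\la)$, $T(\la) = \sum_{u} B_{u,v}(\la)^2$, and then $\Sh, \Th, \Jh$ in the variable $t = r(\la)$ as in Remark~\ref{rem:var_t}. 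Then fix the windows: pick $\la''$ slightly above $\la_\infty$ so that $B_{o,v}(\la'') \le 1$ is (nearly) tight, giving (ii); pick $\la'$ with $\la_\infty < \la' < \la''$ so that $\big(S(\la)+1\big)^2/\big(T(\la)+1\big) > \betr$ on $[\la',\la'']$, giving (iv); verify $B_{v,v}(\la') \ge 1/r_\infty$, giving (iii), and $J(\la) > \betr$ on $(\la_\infty, \la')$, giving (v); finally fix a constant $c > 1$ and verify (vi), (vii), (viii) on $(r_\infty, r(\la'))$ in the $t$-variable. For (viii) invoke Remark~\ref{rem:cond8}: since $\la_\infty \approx 2.309 > 2 + \tfrac{1}{4\cdot 5}$ and $r_\infty = \sqrt3 > 1 + \tfrac14$, it suffices to check (viii) at $k = 4$, and validity for all $\ell \ge 8$ follows. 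Each of (iv)--(viii) is an inequality between explicit rational functions on a bounded interval, so by clearing denominators and checking coefficient signs (cf.\ Remark~\ref{rem:poly}) it can be confirmed by computer algebra; see the Jupyter notebook \cite{JUP}. Theorem~\ref{thm:Gamma-upper} then yields $\Ga_G > \betr$, which in particular gives $\Ga_G \ge \betr$ as claimed.

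The main obstacle is calibration rather than anything conceptual: as the paper repeatedly stresses, the inequalities (iv)--(viii) hold only with a very thin margin near $\la_\infty$ (condition (i) itself has a slack of only about $0.15$), so the windows $[\la',\la'']$ and $(r_\infty, r(\la'))$ and the constant $c$ must be chosen with care, and the rational-function checks must be done with exact arithmetic, not floating point. A secondary practical point is that the resolvent of a $9$-vertex graph produces numerators and denominators of sizeable degree, so the bulk of the work is symbolic bookkeeping; one also has to make sure the single graphs potentially left uncovered by the theorem (here none, since $k=\ell-4\ge 4$ for every admissible $\ell$, with no borderline small-$k$ cases) cause no trouble — which is precisely why the statement is restricted to $\ell \ge 8$.
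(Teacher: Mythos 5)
Your proposal is correct and follows essentially the same route as the paper: the paper's proof also applies Theorem~\ref{thm:Gamma-upper} with $H=S_5+_oP_4$, $v$ the endpoint of $P_4$, $k=\ell-4\geq 4$, computes $P(\la)=\la^9-8\la^7+15\la^5-4\la^3$ and the resolvent column, and verifies conditions (i)--(viii) by computer (with the concrete calibration $\la'=2.312$, $\la''=2.34$, $c=1.5$, using Remark~\ref{rem:cond8} to reduce (viii) to $k=4$).
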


\begin{proof} This time we apply Theorem~\ref{thm:Gamma-upper} to $H:=H_4=S_5 +_o P_4$ with $k \geq 4$, $o$ being the central vertex of $S_5$ and $v$ being the last vertex of $P_4$:

\begin{center}
\figSP
\end{center}

In this case $\la_{\infty}=\sqrt{3}+1 \big/ {\sqrt{3}}$ and $r_{\infty}=\sqrt{3}$, while the characteristic polynomial is $P(\la)=\la^9-8\la^7+15\la^5-4\la^3$. For the column vector of $v$ we get 
\[\frac{1}{P(\la)}\left(\la^3,\la^3,\la^3,\la^3,\la^4,\la^5-4\la^3,\la^6-5\la^4,\la^7-6\la^5+4\la^3,\la^8-7\la^6+9\la^4\right) .\]
We checked conditions (i)--(viii) for 
$\la'=2.312; \;
\la'' =2.34; \;  
c= 1.5 $.
Due to Remark~\ref{rem:cond8}, it is enough to verify condition (viii) for $k=4$. See \cite{JUP} for the details.
\end{proof}

\section{Putting the pieces together}
\label{sec:the_end}

We will now piece together the different parts of the paper and prove the main theorems.

\begin{proof}[Proof of Theorem~\ref{thm:only_K4Pk}]
We can quickly check with a computer that $\Ga_G$ is minimized by $G=K_4+P_2$ among $6$-vertex connected graphs. Note that there are four other $6$-vertex graphs with $\Ga_G < \bestar$. For a given $n \geq 7$, we need to show that the only $n$-vertex graph with this property is $G=K_4 + P_{n-4}$. In Section~\ref{sec:pendant_paths} we showed that $K_4 + P_{n-4}$ indeed has this property, see Theorem~\ref{thm:indeed_below}. 

Now suppose that $G$ is a connected graph with $n \geq 7$ vertices such that $\Ga_G<\bestar$. Since $\bestar < 5.25$ and $\bestar < \Ga_{D +_s P_3} \approx 5.18$, Theorem~\ref{thm:525} implies that $N_G[o] = C \cup \{u\}$, where $C$ is a $4$-clique containing the master vertex $o$ and the only edge between $C$ and $V(G) \sm C$ is $(o,u)$, see Figure~\ref{fig:K4_plus_Gpr}. It is easy to see that if $\deg_G(v) \leq 2$ for every vertex $v \notin C$, then the rest of the graph must be a single path, and hence $G \cong K_4 + P_{n-4}$ as claimed. We prove by contradiction: we assume that there exists a vertex $v \notin C$ with $\deg_G(v) \geq 3$ and we take the one closest to $o$ (which may coincide with $u$). Let $\ell:=\dist(o,v)\geq 1$. Now consider the induced subgraph $H$ containing $C$ and the unique path between $o$ and $v$ along with two further neighbors of $v$: $v'$ and $v''$. Then the rooted graph $(H,o)$ must be one of the two graphs below (depending on whether $v' v''$ is an edge or not):
\[ \KKKKPr \cdots \cherry \quad \text{and} \quad  
\KKKKPr \cdots \tri \, .\]
We also know that only $v,v',v''$ may have neighbors outside $H$. Therefore, setting $\Vact:=\{ v,v',v'' \}$ and $\Uc := \Pc_+(\Vact)$, we conclude that $G$ is a $\Uc$-extension of $(H,o)$. For $\ell=1,2$ a computer check (with $\beta=\bestar$) verifies that condition \eqref{eq:coeff_check_poly} holds in these cases. Therefore, Theorem~\ref{thm:Ga_xt_bound_spec} can be applied and we obtain that $\Ga_G \geq \bestar$ for any $\Uc$-extension $G$ of the two kernels above, contradiction. For $\ell \geq 3$ we can use Theorem~\ref{K4_tail} to conclude that $\Ga_G > \bestar$ and again reach a contradiction. The proof is complete.
\end{proof}
\begin{proof}[Proof of Theorem~\ref{thm:only_S5Pk}]
The proof of this theorem follows essentially the same argument as that of Theorem~\ref{thm:only_K4Pk}. 

We start by checking with a computer that for any given $n=8, \ldots, 13$, the $n$-vertex tree with the smallest $\Ga_T$ is indeed $T=S_5 + P_{n-5}$. For these values of $n$, there is at least one other $T$ with $\Ga_T<\betr$ (specifically, $T=S_6 + P_{n-6}$). For a given $n \geq 14$, we need to show that the only $n$-vertex graph with this property is $T=S_5 + P_{n-5}$. Theorem~\ref{thm:indeed_below} proves that $S_5 + P_{n-5}$ has this property.

Suppose that $\Ga_T<\betr$ for a tree $T$ with $n \geq 14$ vertices. Since the exceptions in Theorem~\ref{thm:10kernel} have at most $13$ vertices, it follows that the $10$-kernel of any such $T$ must be $\big( S_5 +_o P_5, o \big)$. Take the vertex $v \neq o$ closest to $o$ with $\deg_T(v) \geq 3$. (If there is no such $v$, then $T$ is necessarily isomorphic to $S_5 + P_{n-5}$, and we are done.) Note that $\ell := \dist(o,v)$ must be at least $4$ (otherwise the $10$-kernel would be different). Now take the induced subgraph $H$ as in the previous proof. Since $T$ is a tree, it cannot contain a triangle, so this time the neighbors $v', v''$ of $v$ cannot be adjacent, and we only have one option for $(H,o)$:
\[ \SSSSSPPr \cdots \cherry \, .\]
For $\ell=4,5,6,7$ we need to perform one last computer check: for $\beta=\betr$ we verify \eqref{eq:coeff_check_poly} to conclude that for any $\big\{ \{v\} , \{v'\} ,\{v''\} \big\}$-extension $T$ of $(H,o)$ we have $\Ga_T \geq \betr$. For $\ell\geq 8$, we can use Theorem~\ref{S5_tail} to reach the same conclusion, and hence a contradiction. The proof is complete. 
\end{proof}

\medskip

\noindent \textbf{Acknowledgment.} The first author is very grateful to Krystal Guo for suggesting the problem and for fruitful discussions. He also thanks Sebastian Cioab\u{a} for helpful discussion on the problem.

\bibliography{bibliography}
\bibliographystyle{plain}

\bigskip

\addresseshere

\newpage

\section{Appendix: a concrete example} \label{sec:appendix}

In order to better demonstrate how our bounding method works, here we carry out the necessary computations for a specific kernel $(H,o)$. We picked the following $6$-vertex rooted graph:
\[ (H,o) := \, \KKKPPPr \, = \big( K_3 +_o P_3 , o\big) .\]
This is actually one of the four ``exceptional`` kernels in the proof of Theorem~\ref{thm:525} for which the simple method fails and a \texttt{two-step verification} is required (see Section~\ref{subsec:comp_one}).

We denote the six vertices as below ($o$, $v_1$, $v_2$  forming the $3$-clique, while the pendant path is $u_1 u_2 u_3$, with $u_3$ being the single leaf of $H$).

\begin{center}
\KKKPPPrl
\end{center}

\noindent Since $\dist(o,u_3)=3$, the set of active vertices for this kernel is 
\[ \Vact=\{ u_2, u_3\} \text{, and hence the relevant subsets are } 
\Uc=\bigg\{ \{u_2\}, \{u_3\}, \{u_2,u_3\} \bigg\} .\]
The adjacency matrix of $H$ is 
\[ A_H = 
\left(\begin{array}{rrrrrr}
0 & 1 & 1 & 1 & 0 & 0 \\
1 & 0 & 1 & 0 & 0 & 0 \\
1 & 1 & 0 & 0 & 0 & 0 \\
1 & 0 & 0 & 0 & 1 & 0 \\
0 & 0 & 0 & 1 & 0 & 1 \\
0 & 0 & 0 & 0 & 1 & 0
\end{array}\right) \]
with the following characteristic polynomial: 
\[ P(\la) = \det(\la I - A_H)= \la^{6} - 6\la^{4} - 2\la^{3} + 8\la^{2} + 4\la - 1 .\]
Then $P(\la) B = \adj(\la I - A_H)$ is the following $6 \times 6$ matrix:\footnote{The order of the vertices in the rows and columns are $o, v_1, v_2, u_1, u_2, u_3$.}
\begin{multline*}
\left(\begin{array}{rrr}
\la^{5} - 3\la^{3} + 2\la & \la^{4} + \la^{3} - 2\la^{2} - 2\la & \la^{4} + \la^{3} - 2\la^{2} - 2\la \\
\la^{4} + \la^{3} - 2\la^{2} - 2\la & \la^{5} - 4\la^{3} + 3\la & \la^{4} + \la^{3} - 3\la^{2} - 2\la + 1 \\
\la^{4} + \la^{3} - 2\la^{2} - 2\la & \la^{4} + \la^{3} - 3\la^{2} - 2\la + 1 & \la^{5} - 4\la^{3} + 3\la \\
\la^{4} - 2\la^{2} + 1 & \la^{3} + \la^{2} - \la - 1 & \la^{3} + \la^{2} - \la - 1 \\
\la^{3} - \la & \la^{2} + \la & \la^{2} + \la \\
\la^{2} - 1 & \la + 1 & \la + 1
\end{array}\right. \quad \cdots \\
\cdots \quad \left. \begin{array}{rrr}
\la^{4} - 2\la^{2} + 1 & \la^{3} - \la & \la^{2} - 1 \\
\la^{3} + \la^{2} - \la - 1 & \la^{2} + \la & \la + 1 \\
\la^{3} + \la^{2} - \la - 1 & \la^{2} + \la & \la + 1 \\
\la^{5} - 4\la^{3} - 2\la^{2} + 3\la + 2 & \la^{4} - 3\la^{2} - 2\la & \la^{3} - 3\la - 2 \\
\la^{4} - 3\la^{2} - 2\la & \la^{5} - 4\la^{3} - 2\la^{2} + \la & \la^{4} - 4\la^{2} - 2\la + 1 \\
\la^{3} - 3\la - 2 & \la^{4} - 4\la^{2} - 2\la + 1 & \la^{5} - 5\la^{3} - 2\la^{2} + 4\la + 2
\end{array}\right)
\end{multline*}
Everything below will be expressed using the polynomials in this matrix and the characteristic polynomial $P$.

Given a target ratio $\beta$, in order to show that $\Ga_G \geq \beta$ for every $\Uc$-extension of $(H,o)$, we need to check a polynomial inequality for every pair $U,V \in \Uc$. The roles of $U$ and $V$ are symmetric so there are $6$ possible pairs. For $\beta=5.25$ the check fails for one of these pairs: $U=V=\{ u_3 \}$. We include the computations only for this pair. For $U=\{ u_3 \}$ we have
\begin{align*}
P(\la) \Bt_{o,U}(\la) &= P(\la) B_{o,u_3}(\la) \\ 
&= \la^2-1 ;\\
P(\la) s_U(\la) &= \sum_{v\in V(H)} P(\la) B_{v,u_3}(\la) \\ 
&= \la^5+\la^4-4\la^3-5\la^2+\la+2 ;\\
P^2(\la) c_{U,U}(\la) &=  \sum_{v\in V(H)} \big( P(\la) B_{v,u_3}(\la) \big)^2 \\
&= \la^{10} - 9\la^{8} - 4\la^{7} + 26\la^{6} + 20\la^{5} - 23\la^{4} - 24\la^{3} + 13\la^{2} + 28\la + 12 .
\end{align*}
Given some target ratio $\beta$, we need to consider the polynomial $Q=Q_{U,V}$ defined in \eqref{eq:coeff_check_poly}. In the special case $U=V$:
\[ Q_{U,U}(\la) = \bigg( P(\la) s_U(\la) + P(\la) \bigg)^2-\be\bigg( P^2(\la) c_{U,U}(\la) + P^2(\la) \Bt_{o,U}(\la) \bigg) .\]
For $U=\{u_3\}$ and $\beta=5.25=5+1/4$ we get 
\begin{multline*}
Q(\la) = Q_{\{u_3\},\{u_3\}}(\la) = \la^{12} + 2 \la^{11} - \frac{57}{4} \la^{10} - 22 \la^{9} + 61 \la^{8} \\
+ 97 \la^{7} - \frac{327}{4} \la^{6} - \frac{357}{2} \la^{5} - \frac{55}{4} \la^{4} + \frac{225}{2} \la^{3} + 10 \la^{2} - 116 \la - \frac{269}{4} .
\end{multline*}
What needs to be checked is that $Q(\la)\geq 0$ for all $\la \geq \lat = \max( \la_{U}, \la_{V} )$. In this case $\lat = \la_{\{u_3\}} = \la_{K_3+P_4} \approx 2.2332$. This would follow if the coefficients of the translated polynomial $\Qt(\ka) = Q(\ka+\lat)$ had positive coefficients. In our case the constant term turns out to be negative: using approximate coefficients, for $\Qt(\ka)$ we have:
\begin{multline*}
\ka^{12} + 28.8 \ka^{11} + 364.04 \ka^{10} + 2658.62 \ka^{9} + 12408 \ka^{8} + 38599.44 \ka^{7} + 80880.76 \ka^{6} \\
+ 112535.68 \ka^{5} + 99776.84 \ka^{4} + 52219.63 \ka^{3} + 14485.75 \ka^{2} + 2162.81 \ka - 5.54
\end{multline*}
Due to this check fail, we needed to handle this (and three other) kernels slightly differently in the proof of Theorem~\ref{thm:525}. In fact, for $\beta=5+1/8=5.125$ we would already get a $\Qt$ with positive coefficients. So even the simple method actually implies that $\Ga_G \geq 5.125 > \bestar$ for every $\Uc$-extension $G$ of $(H,o)$.

To get $\Qt$ we need to translate $Q$ by $\la_U$. Recall that for a fixed $H$ we defined $\la_U$ as the top eigenvalue of the graph ``$H +_U \{w\}$'' obtained from $H$ by connecting a new vertex $w$ to each $u \in U$; see Definition~\ref{def:laU}. In our case these eigenvalues are the following.
\begin{align*}
\la_H &\approx 2.2283 
\quad \text{correspondig to } \; \KKKPPP  \\
\la_{\{u_3\}} &\approx 2.2332 
\quad \text{correspondig to } \; \KKKPPPP  \\
\la_{\{u_2\}} &\approx 2.2533 
\quad \text{correspondig to } \; \KKKPPFF\\
\la_{\{u_2,u_3\}} &\approx 2.3429 
\quad \text{correspondig to } \; \KKKPKKK
\end{align*}

As a final point, we would like to demonstrate the difference between the bounds (i) and (iii) provided in Theorem~\ref{thm:Ga_xt_bound}. To this end, we will plot the bounds as a function of $\la=\la_G$. It is easy to see from Proposition~\ref{prop:B_properties}(e) that for each version of the bound ($j=1,2,3$) it holds that for any $U,V$ 
\[ \frac{a_{U,V}\big( \la \big)}{b^{(j)}_{U,V}\big( \la \big)}
\to \Ga_H \quad \text{as } \la \to \la_H .
\]
Therefore, if $\Ga_H$ is less than our target ratio (which is the case now), then we need a lower bound on $\la$, which is stronger than simply using $\la>\la_H$. Of course, we know that $\la_G \geq \la_{\partial w}$ for any outside vertex $w$. Consequently, it suffices to look at the plots for $\la \geq \lat = \max( \la_{U}, \la_{V} )$. When $U=V$, and hence $\lat=\la_U$, it can be seen easily that bound (i) is actually sharp at the starting point:
\[  \frac{a_{U,U}\big( \la_U \big)}{b^{(1)}_{U,U}\big( \la_U \big)} = \Ga_{H'} ,\]
where $H'$ is the graph $H +_U \{w\}$ mentioned above. This indeed shows that the bound is sharp because $H'$ extends $H$ with top eigenvalue $\la_U$. 

Due to this sharpness at $\la_U$, bound (i) will always be stronger in a short interval $\big( \la_U,\la_U+\varepsilon \big)$ but eventually it will drop below bound (iii). In Figure~\ref{fig:bound_comparison} we plotted 
\[ 
\frac{a_{U,U}\big( \la \big)}{b^{(1)}_{U,U}\big( \la \big)} \text{ in red and } 
\frac{a_{U,U}\big( \la \big)}{b^{(3)}_{U,U}\big( \la \big)} 
\text{ in blue for } U=\{u_3\}.\] 
The black dots show the points $\big( \la_{H} \, , \, \Ga_{H} \big)$ and 
$\big( \la_{H'} \, , \, \Ga_{H'} \big)$. Both curves start at the former point (due to the asymptotics above), while the latter lies on the red curve (due to the sharpness mentioned previously). The green line shows the target ratio $\be=5.25$. The coefficient check failed for this $\beta$, because the blue curve is under the green line at $\la_U = \la_{H'}$, indicated by the second black dot.
\begin{figure}
\begin{center}
\includegraphics[width=0.95\textwidth]{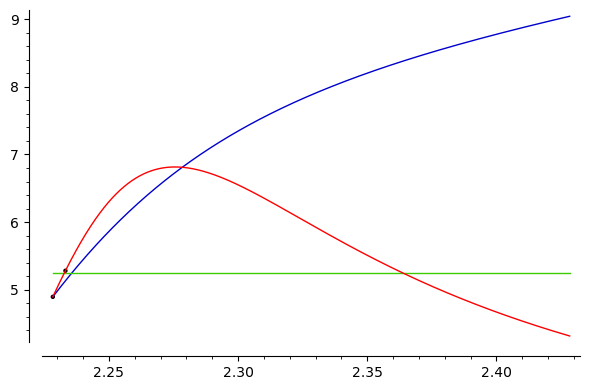}
\end{center}
\caption{Comparison of our bounds as functions of $\la$}
\label{fig:bound_comparison}
\end{figure}
%


\end{document}